\newfont{\cyrr}{wncyr10}
\newcommand{\thmref}[1]{Theorem~\ref{#1}}
\newcommand{\lemref}[1]{Lemma~\ref{#1}}
\newcommand{\rmkref}[1]{Remark~\ref{#1}}
\newtheorem{thm}{Theorem}
\newtheorem{lem}[thm]{Lemma}
\newtheorem{rmk}{Remark}[section]
\newcommand{\Z}{{\mathbb Z}}
\def\({\left(}
\def\){\right)}
\def\[{\left[}
\def\]{\right]}
\def\GL{{\rm GL}}
\def\G{{\rm G}}
\def\N{\mathbb{N}}
\def\R{\mathbb{R}}
\def\C{\mathbb{C}}
\def\Q{\mathbb{Q}}
\def\cO{\mathcal{O}}
\def\cI{\mathcal{I}}
\def\cJ{\mathcal{J}}
\def\cQ{\mathcal{Q}}
\def\cL{\mathcal{L}}
\def\cE{\mathcal{E}}
\def\fp{\mathfrak{p}}
\def\e{\epsilon}
\newcommand{\red}[1]{\textcolor{red}{#1}}
\title[Largest prime factor of Fourier coefficients in short intervals]{A note on
Fourier coefficients of Hecke eigenforms in short intervals}
\author{Sanoli Gun  and Sunil Naik}
\address{Sanoli Gun \newline
The Institute of Mathematical Sciences, 
A CI of Homi Bhabha National Institute, 
CIT Campus, Taramani, 
Chennai 600 113, 
India.}
\email{sanoli@imsc.res.in}
\address{Sunil L Naik \newline
Department of Mathematics,
Queen's University, Jeffery Hall, 
99 University Avenue, 
Kingston, ON K7L 3N6, 
Canada.}
\email{naik.s@queensu.ca}
\begin{document}
	
\hfuzz 5pt
	
\subjclass[2010]{11F11, 11F30, 11F80, 11N56, 11R45}

\keywords{Large prime factors of Fourier coefficients of Hecke eigenforms, 
Explicit version of Chebotarev density theorem
in short intervals,  number of non-zero Fourier coefficients at primes in short intervals}	
	
\maketitle
	
\begin{abstract}
In this article, we investigate large prime factors of Fourier coefficients of 
non-CM normalized cuspidal Hecke eigenforms in short intervals.
One of the new ingredients involves deriving an explicit version of 
Chebotarev density theorem in an interval of length $\frac{x}{(\log x)^A}$
for any $A>0$, modifying an earlier work of Balog and Ono. Furthermore, we
need to strengthen a work of Rouse-Thorner to
derive a lower bound for the largest prime factor 
of Fourier coefficients in an interval of 
length $x^{1/2 + \epsilon}$ for any $\epsilon >0$. 
\end{abstract}

\section{Introduction and Statements of Results}
Let $x, y$ be real numbers, $p, q$ be prime numbers, $N \ge 1$ be an integer
and $f$ be a non-CM normalized cuspidal
Hecke eigenform of weight $k \ge 2$ for $\Gamma_0(N)$ 
with integer Fourier coefficients $a_f(m)$ for $m \geq 1$. 
 In this article, we investigate large prime factors of Fourier coefficients of $f$  in short
 intervals. We note that even the existence of a prime $p$ in short intervals
 with $a_f(p) \ne 0$ in itself a difficult question.  
 It follows from a recent work of Lemke Oliver and Thorner \cite[Theorem 1.6]{OT}
that there exists an absolute constant $\delta>0$ and a prime $p \in (x, x +  y]$  
such that $a_f(p) \neq 0$ when $y \ge x^{1- \delta}$.

In this work, we find prime factors of size at least
 $(\log x)^{1/8}$ in intervals of length $\frac{x}{{(\log x})^A}$ for
 any positive $A$.  This begs the question about the
 possible/expected order of such prime factors in such short or even shorter
 intervals of size/length, say, a small power of $x$.
 We show that under the generalized Riemann hypothesis for 
 all symmetric power $L$-functions of $f$ and all Artin $L$-series, 
 one can  find  prime factors of size at
 least $x^{\epsilon/7}$ in intervals of length $x^{1/2 + \epsilon}$ for any 
 $\epsilon < 1/10$.

 In an earlier work \cite{BGN}, the present authors 
along with Bilu investigated lower bounds for the
largest prime factor of $a_f(p)$.
However finding such large prime factors in short intervals  
is a different ball game. We  need to 
establish a explicit version of a result of Balog-Ono \cite{BO}.
Further, for the conditional result on 
the generalized Riemann hypothesis (as specified above), 
we need  to strengthen a conditional result of Rouse-Thorner \cite{RT} 
(see also Thorner \cite{Th}) in short intervals.

Before proceeding further, let us fix a notation.
For any integer $n$, let $P(n)$ denote the largest prime factor 
of $n$ with the convention that $P(0)= P(\pm 1) = 1$. 
Throughout the article, by GRH, 
we mean the generalized Riemann hypothesis 
for all symmetric power $L$-functions of $f$ 
and all Artin $L$-series, unless otherwise specified.
In this set up, we prove the following results.

\begin{thm}\label{thm1}
Let $f$ be a non-CM normalized cuspidal Hecke eigenform 
of weight $k$ for $\Gamma_0(N)$ 
having integer Fourier coefficients $a_f(m)$ for $m \ge 1$.
For positive real numbers $A, \e$ and natural numbers $n \ge 1$, 
there exists a prime $p \in (x,  ~x + \frac{x}{(\log x)^A}]$ such that
$$
P(a_f(p^{n})) ~>~ (\log x^n)^{1/8} (\log\log x^n)^{3/8 -\e}
$$
for all sufficiently large $x$ depending on $A, \e, n$ and $f$.
\end{thm}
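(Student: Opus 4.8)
The plan is to combine three ingredients: (i) an explicit version of the Chebotarev density theorem in short intervals of length $x/(\log x)^A$, which refines Balog--Ono \cite{BO}; (ii) lower bounds for $P(a_f(p))$ in terms of the multiplicative structure forced by the Hecke relations on $a_f(p^n)$; and (iii) a counting argument showing that the primes $p$ for which $a_f(p)$ has only small prime factors are too sparse to fill a short interval.

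\medskip

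\textbf{Step 1: Reduce to $n=1$ via the Hecke relations.} For a normalized eigenform, $a_f(p^n)$ is a polynomial in $a_f(p)$ (namely the Chebyshev-type expression coming from $\prod(1-\alpha_p T)(1-\beta_p T)$), so every prime dividing $a_f(p)$ divides $a_f(p^n)$ for at least one residue of $n$, and more usefully, when $a_f(p)\neq 0$ one has $P(a_f(p^n)) \ge$ a quantity controlled by $P(a_f(p))$. I would first record the precise algebraic identity $a_f(p^n) = \sum_{j} (-p^{k-1})^j \binom{n-j}{j}\, a_f(p)^{\,n-2j}$ (up to sign conventions), observe that a prime $\ell \mid a_f(p)$ with $\ell \nmid p^{k-1}$ then divides $a_f(p^n)$ exactly when $\ell \nmid \binom{n}{?}\cdots$ — more simply, for $\ell$ large relative to $n$ and coprime to $p$, divisibility $\ell \mid a_f(p)$ forces $\ell$ to divide $a_f(p^n) \equiv a_f(p)^n \cdot(\text{unit}) + (\text{multiple of }\ell)$ appropriately; the upshot is that it suffices to find $p$ in the short interval with $P(a_f(p)) > (\log x)^{1/8}(\log\log x)^{3/8-\e}$ and $a_f(p)\neq 0$, and then $\log x^n = n\log x$ only changes the constant, absorbed into ``sufficiently large $x$ depending on $n$''.

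\medskip

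\textbf{Step 2: A sieve/counting inequality.} Fix a threshold $z = z(x)$ to be chosen near $(\log x)^{1/8}(\log\log x)^{3/8-\e}$. I want to show that
\[
\#\{ p \in (x, x + x/(\log x)^A] : a_f(p)\neq 0,\ P(a_f(p)) \le z \}
\]
is strictly smaller than $\pi(x + x/(\log x)^A) - \pi(x)$, which by the prime number theorem in short intervals (or Huxley, well within range here) is $\gg x/(\log x)^{A+1}$. If $P(a_f(p)) \le z$, then $|a_f(p)|$ is $z$-smooth; by Deligne, $|a_f(p)| \le 2p^{(k-1)/2} \le 2(2x)^{(k-1)/2}$, so $a_f(p)$ lies in a fixed finite set $S(x)$ of $z$-smooth integers up to that size. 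For each nonzero $z$-smooth integer $b$, I bound $\#\{p \in (x, x+x/(\log x)^A] : a_f(p) = b\}$ using the explicit Chebotarev density theorem in short intervals: the condition $a_f(p) \equiv b$ modulo each prime power $\ell^j \| b$ cuts out a union of Frobenius conjugacy classes in the mod-$\ell^j$ Galois representation attached to $f$, and combining over $\ell^j \| b$ (using that these representations are, for $\ell$ outside a fixed finite bad set, ``independent'' in the sense of having large joint image — this is where the non-CM hypothesis and the openness of the adelic image à la Ribet/Momose enter) gives a density that is $\ll 1/(\text{something growing in the number of prime factors of }b)$ for $p$ in a single residue-type condition, and the short-interval Chebotarev statement (Step 3) converts this density into a count with an acceptable error term.

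\medskip

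\textbf{Step 3: Explicit Chebotarev in short intervals — the main obstacle.} The technical heart is the claimed refinement of Balog--Ono: for a Galois extension $L/\Q$ with group $G$ and a conjugacy-stable $C \subseteq G$, one needs
\[
\pi_C(x + x/(\log x)^A) - \pi_C(x) = \frac{|C|}{|G|}\Big( \mathrm{Li}(x+x/(\log x)^A) - \mathrm{Li}(x)\Big) + \text{error},
\]
with an error term that is explicit in the conductor/ramification of $L$ and small enough to survive summation over the moduli $\ell^j$ arising from $z$-smooth $b$ of size up to $x^{(k-1)/2}$. Balog--Ono handle intervals of length $x^{1-\delta}$; pushing to $x/(\log x)^A$ requires feeding the Deligne--Lang--Weil/Labesse--or rather the log-free zero-density estimates for Hecke/Artin $L$-functions (as in Lemke Oliver--Thorner \cite{OT}, whose Theorem 1.6 is quoted in the introduction) through the classical Chebotarev machinery, tracking the dependence on $d_L$ carefully. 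I expect this to be the step that consumes most of the work, because one must (a) make the zero-density/zero-free-region input fully explicit in the field, and (b) ensure the number-of-smooth-numbers count $\sum_{b \in S(x)} 1 \ll x^{(k-1)/2}\cdot u^{-u}$ with $u = \log x^{(k-1)/2}/\log z \asymp \log x/\log z$, when multiplied by the per-$b$ error term, stays below $x/(\log x)^{A+1}$; this is precisely what forces the exponent $1/8$ and the secondary factor $(\log\log x)^{3/8-\e}$ — optimizing $u^{-u} \cdot (\text{trivial bound from small-modulus Chebotarev})$ against $1$ gives $\log z \asymp \frac{1}{8}\log\log x$ after balancing, and the $3/8$ comes from the refined de Bruijn estimate for $\Psi(X,z)$ with $\log z$ of size $(\log\log X)^{1+o(1)}$.

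\medskip

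\textbf{Step 4: Conclude.} With Steps 2--3 in place, choosing $z = (\log x)^{1/8}(\log\log x)^{3/8-\e}$ makes the exceptional count $o(x/(\log x)^{A+1})$, hence for large $x$ there is a prime $p$ in the interval with $a_f(p) \neq 0$ and $P(a_f(p)) > z$; by Step 1 this yields $P(a_f(p^n)) > z = (\log x)^{1/8}(\log\log x)^{3/8-\e}$, and replacing $\log x$ by $\log x^n = n\log x$ (which only shifts $x$ past which the inequality holds, by an amount depending on $n$) gives the stated form. The non-CM hypothesis is used essentially in Step 2 to guarantee that $a_f(p) \neq 0$ for a positive-density — in fact all but $O(x^{1-\delta})$ — set of primes (Serre, Lemke Oliver--Thorner) and that the mod-$\ell$ images are large enough for the independence estimate.
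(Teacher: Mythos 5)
Your proposal correctly identifies one of the two pillars (an explicit short-interval Chebotarev theorem applied to the mod-$\ell^m$ Galois representations), but it has two genuine gaps. First, the reduction in Step 1 to $n=1$ is false for even $n$: from the recurrence $a_f(p^{n+1})=a_f(p)a_f(p^n)-p^{k-1}a_f(p^{n-1})$, a prime $\ell\mid a_f(p)$ with $\ell\nmid p$ gives $a_f(p^{2m})\equiv(-p^{k-1})^m\not\equiv 0\pmod\ell$, so large prime factors of $a_f(p)$ need not divide $a_f(p^n)$ when $n$ is even. The correct reduction (via the Lucas-sequence divisibility $a_f(p^{d-1})\mid a_f(p^n)$ for $d\mid n+1$) is to $n=q-1$ with $q$ prime, and for odd $q$ one cannot fall back on the $n=1$ case: one must separately control the densities $\delta_{q-1}(\ell^m)$ of primes with $a_f(p^{q-1})\equiv 0\pmod{\ell^m}$, which behave quite differently (they vanish unless $\ell\equiv 0,\pm1\pmod q$).

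Second, and more seriously, your Step 2 cannot close. Bounding $\#\{p: a_f(p)=b\}$ for each individual $z$-smooth $b$ via congruences modulo $\ell^j\Vert b$ gives nothing for small $b$ (for $b=\pm1$ there is no congruence condition at all, and pinning down $a_f(p)=b$ exactly is a Lang--Trotter problem far out of reach); moreover, summing the congruence densities $\approx 1/b$ over all smooth $b$ up to $x^{(k-1)/2}$ yields $\gg\log z$ times the number of primes in the interval, which exceeds the trivial bound. The missing idea is a \emph{lower bound} on $|a_f(p)|$: the paper compares an upper bound for $\sum_{p}\log|a_f(p^{q-1})|$ (obtained by summing the valuations $\nu_{x,\ell}\log\ell$ over $\ell\le w$, with the Chebotarev input only usable for moduli $\ell^m\ll(\log x/\log\log x)^{1/4}$ and a trivial monotonicity bound beyond that) against a lower bound $\gg x/(\log x)^A$ coming from the unconditional short-interval Sato--Tate theorem of Lemke Oliver--Thorner, which supplies positively many $p$ in the interval with $\lambda_f(p)$ bounded away from the roots of the relevant Chebyshev factorization, hence $|a_f(p^{q-1})|\gg p^{(k-1)(q-1)/2}$. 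Without this Sato--Tate input (which you invoke only for $a_f(p)\ne0$, not for $|a_f(p)|$ being large) there is no contradiction, since all the $a_f(p)$ could a priori be tiny and hence trivially smooth. Your heuristic that the exponents $1/8$ and $3/8$ arise from de Bruijn's $\Psi(X,z)$ estimates is also not the actual mechanism: they come from balancing $w^2/(z\log w)$ against $1$ where the cap $z\asymp(\log x)^{1/4}(\log\log x)^{-1/4}$ is forced by the requirement $\ell^{4m}\log(\ell N)\ll\log x$ in the effective Chebotarev theorem.
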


\begin{rmk}\label{density}
The lower bound in \thmref{thm1} can be replaced 
by $(\log x^n)^{1/8}(\log\log x^n)^{3/8}u(x^n)$ 
for any real valued non-negative function $u$ 
with $u(x) \to 0$ as $x \to \infty$.
\end{rmk}

\begin{thm}\label{thm2}
Suppose that GRH is true, $f$ is as in \thmref{thm1} and $\e \in (0, \frac{1}{10})$.
For any natural number $n >1$, there exists a positive real constant $c$ (depending on $\e, n, f$),
a positive constant $b$ (depending on $n$) and a prime number $p \in (x, x+ x^{\frac{1}{2}+\e}]$
such that
$$
P\(a_f(p^{n})\) ~>~ c x^{\e b}
$$
for all sufficiently large $x$ depending on $\e, n, f$. When $n=1$, there
exists a positive real constant $c$ (depending on $\e, f$)
and a prime number $p \in (x, x+ x^{\frac{1}{2}+\e}]$
$$
P\( a_f(p) \) 
~>~
c x^{\e/7} (\log x)^{2/7}
$$
for all sufficiently large $x$ depending on $\e, f$. 
\end{thm}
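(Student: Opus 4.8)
The plan is to argue by contradiction, combining an effective Sato--Tate estimate in the short interval with a count of how often $a_f(p^n)$ can be divisible by small prime powers. Fix $\e\in(0,1/10)$, put $h=x^{1/2+\e}$, and let $y$ denote the lower bound claimed in \thmref{thm2} (so $y=cx^{\e/7}(\log x)^{2/7}$ when $n=1$ and $y=cx^{\e b}$ when $n>1$, for suitable positive $c$ and $b=b(n)$). Suppose, for contradiction, that for arbitrarily large $x$ every prime $p\in(x,x+h]$ with $a_f(p^n)\neq 0$ satisfies $P(a_f(p^n))\le y$. Writing $a_f(p)=2p^{(k-1)/2}\cos\theta_p$ with $\theta_p\in[0,\pi]$, the Hecke recursion gives $a_f(p^n)=p^{n(k-1)/2}U_n(\cos\theta_p)$ with $U_n$ the Chebyshev polynomial of the second kind; equivalently $a_f(p^n)=P_n\bigl(a_f(p),p^{k-1}\bigr)$ for a fixed $P_n(T,D)\in\Z[T,D]$, monic of degree $n$ in $T$. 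In particular $1\le |a_f(p^n)|\le (n+1)(2x)^{n(k-1)/2}$ for each such $p$, so under the hypothesis every $|a_f(p^n)|$ is a $y$-smooth integer of size at most a constant times $(2x)^{n(k-1)/2}$.

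First I would bound $\sum_{p\in(x,x+h]}\log|a_f(p^n)|$ from below. Because $h=x^{1/2+\e}$ exceeds $\sqrt{x}\,(\log x)^{O(1)}$, the GRH effective Sato--Tate theorem of Rouse--Thorner \cite{RT} transports to the interval $(x,x+h]$ (its error, coming from GRH for the $L$-functions $L(s,\mathrm{Sym}^m f)$, is $\ll\sqrt{x}\,(\log x)^{O(1)}$ irrespective of $h$), and shows that all but $O\bigl(h/(\log x)^{2}\bigr)$ of the primes $p\in(x,x+h]$ have $|U_n(\cos\theta_p)|\ge 1/\log x$, the exceptions being those $p$ whose $\theta_p$ lies within $O(1/\log x)$ of one of the $n$ zeros of $U_n$ (a set of Sato--Tate measure $O(1/\log x)$). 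For the remaining $p$ one has $\log|a_f(p^n)|\ge\tfrac{n(k-1)}{2}\log x-\log\log x$, and the same theorem gives $(1+o(1))h/\log x$ primes $p\in(x,x+h]$ with $a_f(p^n)\neq 0$; hence
\[
\sum_{\substack{p\in(x,x+h]\\ a_f(p^n)\neq 0}}\log|a_f(p^n)|\ \ge\ (1+o(1))\,\frac{n(k-1)}{2}\,h.
\]

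Next I would bound this sum from above using the smoothness hypothesis, via $\log|a_f(p^n)|=\sum_{\ell\le y}v_\ell(a_f(p^n))\log\ell$. For each $\ell\le y$ I need $\sum_{p\in(x,x+h]}v_\ell(a_f(p^n))=\sum_{e\ge 1}\#\{p\in(x,x+h]:\ell^{e}\mid a_f(p^n)\}$. Apart from the finitely many $\ell$ dividing $N$ or for which $\bar\rho_{f,\ell}$ has exceptional image, the mod-$\ell^{e}$ representation $\bar\rho_{f,\ell^{e}}\colon\mathrm{Gal}(\overline{\Q}/\Q)\to\GL_2(\Z/\ell^{e}\Z)$ has image of index bounded independently of $\ell,e$; the divisibility $\ell^{e}\mid a_f(p^n)$ is the Chebotarev condition $P_n(\mathrm{tr}\,g,\det g)\equiv 0\bmod \ell^{e}$, which (as $P_n$ is monic of degree $n$ in $T$) has density $\mu_n(\ell^{e})\ll_n \ell^{-e}$; and $\Q(\bar\rho_{f,\ell^{e}})$ has degree and logarithmic discriminant $\ll \ell^{O(e)}(\log x)^{O(1)}$. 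Hence, under GRH, effective Chebotarev in $(x,x+h]$ (error $\ll\sqrt{x}\,\ell^{O(e)}(\log x)^{O(1)}$, independent of $h$), combined with truncating the $e$-sum at $e_{0}\asymp \e'\log x/\log\ell$ and bounding the tail $e>e_{0}$ trivially via $v_\ell(a_f(p^n))\ll\log x$, yields
\[
\sum_{p\in(x,x+h]}v_\ell(a_f(p^n))\ \le\ (1+o(1))\,\frac{A_n(\ell)}{\ell}\cdot\frac{h}{\log x}\ +\ \mathrm{ERR}_\ell ,
\]
where $A_n(\ell)=\ell\sum_{e\ge 1}\mu_n(\ell^{e})$ is bounded and tends to a constant $C_n$ as $\ell\to\infty$ (with $C_1=1$), and $\mathrm{ERR}_\ell\ll h\,x^{-\e'}(\log x)^{O(1)}+\sqrt{x}\,x^{O(\e')}(\log x)^{O(1)}$; the bad $\ell$ contribute $O(1)$ summands absorbed into the error. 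Weighting by $\log\ell$ and summing over $\ell\le y$, Mertens' theorem gives $\sum_{\ell\le y}A_n(\ell)\ell^{-1}\log\ell=(C_n+o(1))\log y$, and choosing $\e'$ to balance the two parts of $\sum_{\ell\le y}\mathrm{ERR}_\ell\log\ell$ against $h=x^{1/2+\e}$ makes this error $o(h)$ for $y$ as large as the bound recorded in \thmref{thm2}; it is exactly this optimization, together with a careful treatment of the logarithmic factors, that produces $x^{\e/7}(\log x)^{2/7}$ when $n=1$ and $x^{\e b}$ for a suitable $b=b(n)>0$ when $n>1$. Thus
\[
\sum_{\substack{p\in(x,x+h]\\ a_f(p^n)\neq 0}}\log|a_f(p^n)|\ \le\ (1+o(1))\,C_n\,\frac{h\log y}{\log x}+o(h).
\]

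Finally I would compare the two bounds: the right-hand side above has $\log y/\log x\to\e/7$ (resp.\ $\e b$), so it equals $\bigl(\tfrac{\e}{7}C_n+o(1)\bigr)h$ (resp.\ $(\e b\,C_n+o(1))h$), and since $\e<1/10$ and $C_n$ is bounded this is, for $x$ large, strictly smaller than $(1+o(1))\tfrac{n(k-1)}{2}h\ge\tfrac12 h$, contradicting the lower bound. Hence no such $y$ can bound $P(a_f(p^n))$ for all admissible $p\in(x,x+h]$, i.e.\ some such $p$ satisfies $P(a_f(p^n))>y$, as required; the case distinction $n=1$ versus $n>1$ in the statement reflects only how sharply one tracks the constants. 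The hard part will be the third step: obtaining bounds for the degree and discriminant of the division fields $\Q(\bar\rho_{f,\ell^{e}})$ that are uniform in $\ell$ and $e$, and then balancing the two pieces of $\mathrm{ERR}_\ell$ against the interval length $h=x^{1/2+\e}$ — this balance is what forces the exponent $\e/7$, and controlling the logarithmic losses in the effective Chebotarev estimate is what yields the extra factor $(\log x)^{2/7}$ when $n=1$. A secondary technical point is the treatment of the finitely many bad primes $\ell$ and of the higher prime powers $\ell^{e}$, $e\ge 2$, both of which I expect to contribute only lower-order terms.
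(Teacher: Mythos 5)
Your overall architecture (contradiction, factor $\prod_p|a_f(p^n)|$ into primes $\ell\le y$, bound the $\ell$-adic valuations by GRH-effective Chebotarev for the mod $\ell^m$ representations with error $\ll \sqrt{x}\,\ell^{O(m)}\log x$ independent of the interval length, and beat this against a Sato--Tate lower bound in $(x,x+x^{1/2+\e}]$) is the same as the paper's. But your lower-bound step has a genuine gap. You assert that the Rouse--Thorner effective Sato--Tate under GRH ``transports to the interval $(x,x+h]$'' with error $\ll\sqrt{x}(\log x)^{O(1)}$ and then use it in an \emph{asymptotic} form: all but $O(h/(\log x)^2)$ primes in the short interval have $|U_n(\lambda_f(p))|\ge 1/\log x$, giving $\sum\log|a_f(p^n)|\ge(1+o(1))\tfrac{n(k-1)}{2}h$. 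The cited result (\thmref{EfSTGRH}) has error $O\bigl(x^{3/4}\log(kNx)/\log x\bigr)$, which swamps $h=x^{1/2+\e}$ for $\e<1/4$; the paper explicitly notes that a $x^{1/2+\e}$ error is only \emph{expected}. Moreover, your statement requires excluding windows of width $\asymp 1/\log x$ around the zeros of $U_n$, i.e.\ approximating indicator functions of sets whose complement has measure $\asymp 1/\log x$; with a fixed number of symmetric powers this is impossible, and with degree growing like $\log x$ one needs uniform-in-$m$ GRH bounds for $\sum_{x<p\le x+h}U_m(\lambda_f(p))\log p$ together with a Beurling--Selberg majorant/minorant analysis that you do not supply. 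This is precisely where the paper had to prove something new (\thmref{STshomin}): it uses a \emph{fixed} $\text{Sym}^M$-minorization of a set $\cJ_q$ kept at distance $\asymp q^{-2}$ from the zeros of $U_{q-1}$, so only $M+1$ symmetric-power $L$-functions enter, each contributing $O_M(x^{1/2}(\log x)^2)$, and one settles for a positive-proportion lower bound $\ge(b_0-\e)y$ rather than an asymptotic; the smallness of the resulting constant is then compensated by choosing the smoothness threshold $w$ (hence $c$ and $b<\min\{b_0,1/7\}$) appropriately --- this is also where the dependence of $b$ on $n$ actually comes from, not from balancing error terms.

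A secondary gap: for $n>1$ you work with general $n$ and assert that the Chebotarev density of $\{\,g:\ P_n(\mathrm{tr}\,g,\det g)\equiv 0 \bmod \ell^{e}\,\}$ is $\ll_n\ell^{-e}$ uniformly in $e$, with uniform degree/discriminant bounds for the division fields. For $e\ge 2$ and small or ramified $\ell$ this is not automatic (Hensel lifting can fail at singular points), and such estimates are exactly the content of the authors' earlier work, available only for $n=q-1$ with $q$ prime (\lemref{deq-1l}, \lemref{dq-1lm}). The paper sidesteps general $n$ by the divisibility $a_f(p^{d-1})\mid a_f(p^{n})$ for $d\mid n+1$ (\lemref{red-p}), reducing to $n=q-1$; you should either incorporate this reduction or actually prove your density claim for all $n$ and all prime powers. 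With these two repairs (use \lemref{red-p}, and replace your asymptotic short-interval Sato--Tate by a fixed-$M$ minorization lower bound à la \thmref{STshomin}), your argument becomes essentially the paper's proof.
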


If we are allowed to go up to a little longer than  $x^{\frac{3}{4}}$, then GRH
ensures even  larger prime factors. More precisely, we have the following:

\begin{thm}\label{thmn}
Suppose that GRH is true and 
let $\eta(x) = x^{3/4} \log x \cdot \log\log x$. Then
for all  $x$ is sufficiently large
(depending on $n$ and $f$),  there exists 
a prime $p \in (x,  x + \eta(x) ]$  such that 
$$
P(a_f(p^{n})) ~>~ c x^{1/28} (\log x)^{3/7} (\log\log x)^{1/7}
$$
for some positive real number $c$ depending on $f$. 
\end{thm}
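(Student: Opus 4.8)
The plan is to follow the proof of \thmref{thm2}, with the interval $(x,x+x^{1/2+\e}]$ there replaced by $(x,x+\eta(x)]$ and the parameters re-optimised. The restriction $\e<\tfrac1{10}$ in \thmref{thm2} comes only from a balance between a main term and a Chebotarev error term in that argument; lengthening the interval by the factor $\log x\cdot\log\log x$ relaxes this balance so as to permit $\e=\tfrac14$ (whence $\e/7=\tfrac1{28}$), the surviving part of the factor $\log x$ being passed into the conclusion and the factor $\log\log x$ being used to absorb auxiliary errors of size $\asymp x^{3/4}$. Concretely, I would argue by contradiction: assume $P(a_f(p^{n}))\le z$ for \emph{every} prime $p\in(x,x+\eta(x)]$, where $z:=c\,x^{1/28}(\log x)^{3/7}(\log\log x)^{1/7}$ with $c=c(f,n)>0$ a small constant to be fixed at the end; then every $a_f(p^{n})$ with $p$ in this interval is $z$-smooth.

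First I would bound from below the product $\prod_{p}|a_f(p^{n})|$, over the primes $p\in(x,x+\eta(x)]$ with $a_f(p^{n})\neq0$. Writing $a_f(p)=2p^{(k-1)/2}\cos\theta_p$ and $a_f(p^{n})=p^{n(k-1)/2}U_n(\cos\theta_p)$ with $U_n$ the $n$-th Chebyshev polynomial of the second kind, one has $\log|a_f(p^{n})|=\tfrac{n(k-1)}{2}\log p+\log|U_n(\cos\theta_p)|$. Under RH for $\zeta$ (a special case of GRH) we have $\sum_{x<p\le x+\eta(x)}\log p=(1+o(1))\eta(x)$, with error $O(\sqrt x(\log x)^{2})=o(\eta(x)/\log x)$; and, summing $\log|U_n(\cos\theta_p)|$ over the interval, the short-interval Sato--Tate and Rankin--Selberg estimates for $f$ proved earlier under GRH (together with the integrability of $\log|U_n(\cos\theta)|$ against the Sato--Tate measure) give $\sum_{x<p\le x+\eta(x)}\log|U_n(\cos\theta_p)|=o(\eta(x))$. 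The primes with $a_f(p^{n})=0$ are exactly those for which $a_f(p)^{2}/p^{k-1}$ lies in an explicit finite set of algebraic numbers, a Chebotarev condition of density $0$, so by the short-interval effective Chebotarev theorem under GRH there are $O(\sqrt x(\log x)^{O(1)})=o(\eta(x)/\log x)$ of them; discarding these, $\log\prod_{p}|a_f(p^{n})|=(1+o(1))\tfrac{n(k-1)}{2}\eta(x)$.

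For the upper bound, $z$-smoothness gives $\log\prod_{p}|a_f(p^{n})|\le\sum_{q\le z}(\log q)\sum_{a\ge1}\#\{p\in(x,x+\eta(x)]:q^{a}\mid a_f(p^{n})\}$. For each prime power $q^{a}$ the divisibility $q^{a}\mid a_f(p^{n})$ is a Frobenius condition for $\mathrm{Sym}^{n}$ of the mod-$q^{a}$ representation $\bar\rho_{q^{a}}$ attached to $f$, and the short-interval refinement of Balog--Ono's effective Chebotarev theorem under GRH, applied to the number field cut out by $\bar\rho_{q^{a}}$, gives $\#\{p\in(x,x+\eta(x)]:q^{a}\mid a_f(p^{n})\}=\varrho_n(q^{a})\,\tfrac{\eta(x)}{\log x}+O\big(|G_{q^{a}}|^{O(1)}\sqrt x\,(\log(Nq^{a}x))^{O(1)}\big)$, with $G_{q^{a}}=\mathrm{im}(\bar\rho_{q^{a}})$ and density $\varrho_n(q^{a})\ll_n q^{-a}$; for the ranges of $q^{a}$ in which this is useless the count is bounded directly using Deligne's bound $q^{a}\le|a_f(p^{n})|\ll_n x^{n(k-1)/2}$ and the divisor structure of $a_f(p^{n})$. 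Carrying out the summations over $a$ and then over $q\le z$ — with $\sum_{q\le z}(\log q)/q=\log z+O(1)$ for the main term and partial summation for the errors — one reaches $\sum_{q\le z}(\log q)\sum_{a\ge1}\#\{p:q^{a}\mid a_f(p^{n})\}\le C_n\,\eta(x)\,\tfrac{\log z}{\log x}+E(z,x)$, where $C_n\asymp n$ and the total Chebotarev error is $E(z,x)=O\big(z^{7}\sqrt x\,(\log x)^{-2}\big)$. Comparing with the lower bound, and using that $\tfrac{\log z}{\log x}\to\tfrac1{28}$ while $c_0/C_n\asymp k-1>\tfrac1{28}$ uniformly in $n$ (with $c_0:=\tfrac{n(k-1)}{2}$), the right-hand main term is $\le(c_0-\delta_0)\eta(x)$ for a fixed $\delta_0>0$, so $E(z,x)\ge\delta_0\,\eta(x)=\delta_0\,x^{3/4}\log x\log\log x$; but for $z$ as chosen $z^{7}\sqrt x\,(\log x)^{-2}\asymp c^{7}\,\eta(x)$, which for $c$ small enough is a contradiction. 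Hence some prime $p\in(x,x+\eta(x)]$ has $P(a_f(p^{n}))>c\,x^{1/28}(\log x)^{3/7}(\log\log x)^{1/7}$.

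The delicate step is the upper bound: one needs the short-interval effective Chebotarev estimate with error growing only polynomially in $|G_{q^{a}}|$, \emph{uniformly} over all $q\le z$ and the relevant exponents $a$, and — more demandingly — an efficient treatment of the higher-prime-power ($a\ge2$) and large-modulus ranges, since a crude truncation loses too much (it yields an exponent like $\tfrac1{36}$ rather than $\tfrac1{28}$). Once the error is known to be of the form $z^{7}\sqrt x\,(\log x)^{-2}$, everything in the statement is forced: equating this with $\eta(x)=x^{3/4}\log x\log\log x$ gives the exponent $\tfrac1{28}=\tfrac{1/4}{7}$ and the powers $\tfrac37=\tfrac{2+1}{7}$ and $\tfrac17$ of $\log x$ and $\log\log x$.
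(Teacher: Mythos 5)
Your outline matches the paper's strategy (contradiction between a Chebotarev-based upper bound for $\sum_p \log|a_f(p^n)|$ under smoothness and a Sato--Tate-based lower bound, with the same final balancing $w^7 x^{1/2}(\log x)^{-2} \asymp \eta(x)$ forcing $x^{1/28}(\log x)^{3/7}(\log\log x)^{1/7}$), but three load-bearing steps are asserted rather than proved, and they are exactly where the work lies. First, you use a density bound $\varrho_n(q^a)\ll_n q^{-a}$ for the trace-zero condition on $\mathrm{Sym}^n$ of the mod-$q^a$ representation for \emph{general} $n$; the paper has such bounds only when $n+1$ is prime (\lemref{deq-1l}, \lemref{dq-1lm}, and \lemref{deltal} for $n=1$), which is precisely why it first reduces via \lemref{red-p} (the divisibility $a_f(p^{d-1})\mid a_f(p^n)$ for $d\mid n+1$) to $n=q-1$. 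Without that reduction or a proof of the general-$n$ density estimate, your upper bound's main term and error structure are unsupported. Second, your lower bound claims $\sum_{x<p\le x+\eta(x)}\log|U_n(\lambda_f(p))|=o(\eta(x))$ from ``integrability of $\log|U_n|$ against $\mu_{ST}$''; equidistribution alone does not control an unbounded integrand, and one must handle primes with $\lambda_f(p)$ very close to a zero of $U_n$ (e.g.\ a dyadic decomposition using the differenced \thmref{EfSTGRH} together with the trivial bound $|U_n(\lambda_f(p))|\ge p^{-n(k-1)/2}$ when $a_f(p^n)\neq 0$, which does close thanks to the extra $\log x\log\log x$ in $\eta(x)$). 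The paper sidesteps this entirely by restricting to the set $\cJ_q$ of $\lambda_f(p)$ bounded away from the zeros, where only a positive-proportion count is needed.

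Third, and most importantly, the aggregated Chebotarev error $E(z,x)=O(z^{7}\sqrt{x}(\log x)^{-2})$ is stated, not derived: with unspecified $O(1)$ exponents in your per-modulus error you cannot conclude the exponent $7$, hence neither $1/28$ nor the log powers. In the paper this comes from a specific mechanism: the conditional input is not a ``short-interval Balog--Ono'' but the Lagarias--Odlyzko GRH bound (\thmref{pifGRH}) with error $O(x^{1/2}d^4\log(dNx))$, differenced over $(x,x+\eta(x)]$ (admissible since $\eta(x)\gg x^{3/4}\log x$); the saving $\delta_m(d)\,d^4\approx d^3$ in the summed error; and a two-tier truncation in the prime-power exponent at $m_0=[\log z/\log\ell]$ with a \emph{second} parameter $z\asymp w^2/\log x$ (the tail $m>m_0$ bounded by monotonicity), which is what turns the naive exponent (your ``$1/36$'') into $1/28$. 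You correctly flag this as the delicate step, but flagging it is not carrying it out, so as written the proposal has a genuine gap precisely at the step that determines the theorem's shape.
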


\begin{rmk}\label{rmk2}
 Suitable modifications of the proofs of \thmref{thm1}, \thmref{thm2}
 and \thmref{thmn}
 will show that these theorems are true for a set of primes of positive
 density. More precisely, it follows that the number of primes 
 $p \in (x,  ~x + \frac{x}{(\log x)^A}]$ 
for which \thmref{thm1} is true is at least $\frac{a_1x}{(\log x)^{A+1}}$  
for some positive constant $a_1$ and  for all sufficiently large $x$. 
If $\e > 0$ is sufficiently small, then the number of primes 
$p \in (x, x+ x^{\frac{1}{2}+\e}]$ for which \thmref{thm2} is true 
is at least $a_2\frac{x^{1/2+ \e}}{\log x}$ for some $a_2>0$ and 
for all sufficiently large $x$. Further, the number of primes 
$p \in (x,  x + \eta(x) ]$ for which \thmref{thmn} is true 
is at least $\frac{a_3\eta(x)}{\log x}$ 
for some positive constant $a_3$ and for all sufficiently large $x$.
\end{rmk}

\smallskip

\section{Preliminaries}

\medskip

\subsection{Distribution of zeros of Dedekind zeta functions}
Let $L/K$ be an abelian extension of number fields 
with Galois group $G$. Then we have
$$
\zeta_L(s)  ~=~ \prod_{\chi} L(s, \chi, L/K),
$$
where $\chi$ runs over the irreducible characters of $G$ 
(see \cite[Ch. XII]{La}, \cite[VII]{Ne} for more details). 
Let $\mathfrak{f}_\chi$ denote the conductor of $\chi$ 
and set
$$
\cQ ~=~ \cQ(L/K)~=~ \max_{\chi} N_K\(\mathfrak{f}_\chi\),
$$
where $N_K$ denotes the absolute norm on $K$.
Also let
$$
Q ~=~ Q(L/K) ~=~ D_K \cQ n_K^{n_K},
$$
where $D_K$ is the absolute discriminant of $K$ and $n_K = [K:\Q]$. 
We write $s \in \C$ as $s= \sigma+it$, 
where $\sigma= \Re(s)$ and $t= \Im(s)$.
A zero-free region of $\zeta_L(s)$ is given by 
the following theorem 
(\cite[Theorem 3.1]{TZ}, see also \cite[Theorem 1.9]{We}).
\begin{thm}\label{zerofreez}
There exists an absolute positive constant $c_1$ 
such that the Dedekind zeta function $\zeta_L(s)$ 
has atmost one zero in the region
$$
\sigma  ~>~ 1- \frac{c_1}{\log \(Q (|t|+3)^{n_K}\)}.
$$	
Suppose such  a zero $\beta_1$ exists, then it is real, 
simple and is a zero of the L-function corresponding 
to a real Hecke character $\chi_1$ of $G$.
\end{thm}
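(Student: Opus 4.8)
The plan is to prove \thmref{zerofreez} by the classical de la Vallée Poussin argument applied to the factorization $\zeta_L(s) = \prod_\chi L(s,\chi,L/K)$, following the standard treatment of Hecke $L$-functions. Since $L/K$ is abelian, each irreducible character $\chi$ of $G$ is, via class field theory, a Hecke character of $K$ of conductor $\mathfrak{f}_\chi$; its $L$-function $L(s,\chi,L/K)$ is entire unless $\chi$ is trivial, in which case $L(s,\chi,L/K)=\zeta_K(s)$ has a simple pole at $s=1$, and the completed $L$-function has analytic conductor $\asymp D_K N_K(\mathfrak{f}_\chi)(|t|+3)^{n_K}\le Q(|t|+3)^{n_K}$ at height $t$ — the $n_K^{n_K}$ in $Q$ being precisely the uniform Stirling correction for the $n_K$ archimedean $\Gamma$-factors. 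From the Hadamard factorization of the completed $L$-function, equivalently the partial-fraction expansion of $L'/L$, one obtains the standard estimate: for $1<\sigma\le 2$,
$$
-\Re\frac{L'}{L}(\sigma+it,\chi,L/K) ~\le~ c_2\,\log\!\big(Q(|t|+3)^{n_K}\big) ~-~ \sum_{\rho}\Re\frac{1}{\sigma+it-\rho},
$$
where $\rho$ runs over the nontrivial zeros and each summand is nonnegative since $\sigma-\Re\rho>0$; when $\chi$ is trivial one adds the pole contribution $\Re\frac{1}{\sigma+it-1}$ to the right-hand side.

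For the main step, suppose $\rho_0=\beta_0+i\gamma_0$ is a zero of $\zeta_L$ in the region of the theorem; it is then a zero of $L(s,\chi',L/K)$ for some irreducible $\chi'$. For $\sigma>1$ and any prime $\mathfrak{p}\nmid\mathfrak{f}_{\chi'}$, writing $\chi'(\mathfrak{p})=e^{i\phi_\mathfrak{p}}$ and $\theta_\mathfrak{p}=\phi_\mathfrak{p}-\gamma_0\log N_K\mathfrak{p}$, the elementary inequality $3+4\cos(m\theta_\mathfrak{p})+\cos(2m\theta_\mathfrak{p})\ge 0$ applied term by term to the Dirichlet coefficients yields
$$
0 ~\le~ 3\Big(\!-\frac{\zeta_K'}{\zeta_K}(\sigma)\Big) + 4\,\Re\Big(\!-\frac{L'}{L}(\sigma+i\gamma_0,\chi',L/K)\Big) + \Re\Big(\!-\frac{L'}{L}(\sigma+2i\gamma_0,(\chi')^2,L/K)\Big),
$$
the extra ramified and higher-prime-power terms only helping, since they are nonnegative. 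Now insert the bounds of the previous paragraph, keeping the pole of $\zeta_K$ in the first term (contributing $+\tfrac{1}{\sigma-1}$) and the zero $\rho_0$ in the second (contributing $-\tfrac{1}{\sigma-\beta_0}$, as the $\gamma_0$'s cancel), and discard all other zeros. Setting $\sigma=1+\lambda/\mathcal{L}$ with $\mathcal{L}=\log\big(Q(|\gamma_0|+3)^{n_K}\big)$ and choosing $\lambda$ a sufficiently small absolute constant produces an inequality of the shape $\tfrac{4}{\sigma-\beta_0}\le \tfrac{3}{\sigma-1}+O(\mathcal{L})$, which rearranges to $1-\beta_0\ge c_1/\mathcal{L}$ for a suitable absolute $c_1>0$. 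This argument covers every zero except when $(\chi')^2$ is trivial and $|\gamma_0|$ is so small that $\sigma+2i\gamma_0$ sits near the pole of $\zeta_K$; that case, together with the genuinely exceptional possibility $\gamma_0=0$ with $\chi'$ real, must be treated separately, and only the latter survives.

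It remains to show that at most one exceptional zero occurs and to identify its nature. Suppose real zeros $\ge 1-c_1/\mathcal{L}$ arose from two real characters $\chi_1,\chi_2$ (possibly equal, or with one zero coming from $\zeta_K$ itself). The subgroup of $\widehat{G}$ generated by $\chi_1$ and $\chi_2$ has exponent at most $2$ and corresponds to an intermediate field $K\subseteq M\subseteq L$ with $\zeta_M(s)=\prod_{\psi}L(s,\psi,L/K)$, the product over that subgroup; in particular $\zeta_M$ has nonnegative Dirichlet coefficients, constant term $1$, and a single simple pole at $s=1$. Since only the trivial character contributes a pole, $\zeta_M$ must vanish, to total multiplicity $\ge 2$, at the alleged real zeros, and a standard positivity and counting argument — using the nonnegativity of the coefficients of $\zeta_M$ together with the quantitative bounds $|L(\sigma,\psi,L/K)|\ll\mathcal{L}$ and $|L'(\sigma,\psi,L/K)|\ll\mathcal{L}^2$ for $\sigma\in[1-c_1/\mathcal{L},1)$ — forces a contradiction; the same device with a zero of multiplicity $\ge 2$ gives simplicity, and tracing signs shows the surviving character is real. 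Combined with the trigonometric step, which already excludes all non-real zeros and all zeros coming from non-real characters, this is exactly the statement of \thmref{zerofreez}. I expect the delicate points to be the uniform control of the archimedean $\Gamma$-factors, which is the source of the $n_K^{n_K}$, and the careful isolation of the small-$|\gamma_0|$ quadratic case, which is precisely where the exceptional zero is confined.
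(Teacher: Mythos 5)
The paper does not prove this statement at all: it is imported verbatim from Thorner--Zaman \cite[Theorem 3.1]{TZ} (see also Weiss \cite[Theorem 1.9]{We}), so the only fair comparison is with those sources, whose proofs are exactly the classical de la Vall\'ee Poussin $3+4\cos\theta+\cos 2\theta$ argument for the Hecke $L$-functions in the abelian factorization of $\zeta_L$, followed by Landau's positivity argument for the uniqueness, simplicity and realness of the exceptional zero. Your outline reproduces that route faithfully (including correctly flagging that the real work lies in the uniform control of the archimedean factors, which is where the $n_K^{n_K}$ normalization in $Q$ comes from, and in isolating the small-$|\gamma_0|$ quadratic case), so it is essentially the same approach as the cited proof, presented as a sketch rather than with the quantitative uniform estimates carried out.
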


\begin{rmk}
The above exceptional zero $\beta_1$ (if it exists) 
is usually known as  Landau-Siegel zero.
\end{rmk}
For $0 \leq \sigma \leq 1$ and $T \geq 1$, 
let
$$
N(\sigma, T, \chi) 
~=~ 
\#\{ \rho= \beta+i \gamma ~:~ L(\rho, \chi, L/K)= 0, 
~ \sigma < \beta < 1 \text{ and } -T <\gamma < T\},
$$
where the zeros $\rho$ are counted with multiplicity.
Set
$$
N(\sigma, T) 
~=~
 \sum_{\chi} N(\sigma, T, \chi),
$$
where $\chi$ runs over the irreducible characters of $G$.
In this set up, we have the following theorem 
(see \cite[Theorem 3.2]{TZ},  \cite[Theorem 4.3]{We}).
\begin{thm}\label{ZerDen}
There exists an absolute constant $c_2 \geq 1$ 
such that
$$
N(\sigma, T) 
~\ll~ 
B_1 \(QT^{n_K}\)^{c_2 (1-\sigma)}
$$
uniformly for any $ 0 < \sigma < 1$ and $T \geq 1$. 
Here 
$$
B_1 ~=~ B_1(T) 
~=~ \min\{1, (1-\beta_1) \log (QT^{n_K})\}.
$$
\end{thm}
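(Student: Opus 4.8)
The plan is to prove \thmref{ZerDen} by the log-free zero-density method of Linnik and Jutila, in the form worked out for Hecke $L$-functions over a number field by Weiss \cite{We} and sharpened by Thorner and Zaman \cite{TZ}. Since $N(\sigma,T)=\sum_\chi N(\sigma,T,\chi)$ and the $L(s,\chi,L/K)$ are precisely the Hecke $L$-functions whose product is $\zeta_L(s)$, it suffices to attach to each non-trivial zero $\rho=\beta+i\gamma$ of an individual $L(s,\chi)$ with $\sigma<\beta<1$, $|\gamma|<T$, a short Dirichlet polynomial which is large at $\rho$, and then to bound the number of such zeros by a mean-value (large sieve) inequality. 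All error terms will be expressed as powers of the uniform analytic conductor $QT^{n_K}$.

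First I would set up a zero-detecting identity. Let $\mu_\chi(\fn)$ be the Dirichlet coefficients of $1/L(s,\chi)$ and, for a parameter $Y$, put $M_Y(s,\chi)=\sum_{N_K\fn\le Y}\mu_\chi(\fn)N_K\fn^{-s}$, so that $L(s,\chi)M_Y(s,\chi)=1+\sum_{N_K\fn>Y}b_\chi(\fn)N_K\fn^{-s}$ with $b_\chi(\fn)\ll N_K\fn^{\epsilon}$. Multiplying by a Mellin smoothing (i.e. by $\Gamma(w)Z^{w}$ and integrating over $\Re w=2$, then shifting the contour leftwards past the pole at $w=0$) one finds that for every non-trivial zero $\rho$ of $L(s,\chi)$, either
$$
\Big|\sum_{Y<N_K\fn\le Z}b_\chi(\fn)\,N_K\fn^{-\rho}\,e^{-N_K\fn/Z}\Big| ~\gg~ 1,
$$
or $\rho$ lies in a thin exceptional strip near $\Re s=1$ whose width is controlled by the pole of the kernel and the Gamma-factors of $L(s,\chi)$; the zeros in that strip are negligible in number, being bounded directly via the zero-free region of \thmref{zerofreez} together with the classical count $N(\sigma,T,\chi)\ll (|T|+3)\log(Q(|T|+3)^{n_K})$ of all zeros up to height $T$.

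Next I would invoke a number-field mean-value inequality for Dirichlet polynomials (a large sieve bound of the type of Huxley and Weiss): for any $1$-spaced set of $R$ points $s_j=\sigma_j+it_j$ with $|t_j|\le T$ and coefficients $c(\fn)$ supported on ideals with $Y<N_K\fn\le Z$,
$$
\sum_\chi\sum_{j}\Big|\sum_{Y<N_K\fn\le Z}c(\fn)\,N_K\fn^{-s_j}\Big|^2 ~\ll~ \big(Z+R\,QT^{n_K}\big)\sum_{Y<N_K\fn\le Z}|c(\fn)|^2\,N_K\fn^{-2\sigma}.
$$
Applying this to $c=b_\chi$ and to the zeros $\rho$ — thinned beforehand to a $1$-spaced subset at the cost of a factor $\log(QT^{n_K})$ — using the lower bound $\gg 1$ from the detection step together with a routine estimate for the mean square $\sum|b_\chi(\fn)|^2 N_K\fn^{-2\sigma}$ of the mollifier coefficients, and finally choosing $Y$ and $Z$ as suitable powers of $QT^{n_K}$ depending on $\sigma$ so that the diagonal term $Z$ and the off-diagonal term $R\,QT^{n_K}$ balance and the piece proportional to $N(\sigma,T)$ is absorbed on the left, one reaches a preliminary bound $N(\sigma,T)\ll (QT^{n_K})^{c_2(1-\sigma)}\log(QT^{n_K})$.

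The main obstacle — and the last step — is to replace this stray $\log(QT^{n_K})$ by the Deuring--Heilbronn factor $B_1=\min\{1,(1-\beta_1)\log(QT^{n_K})\}$. If $L(s,\chi_1)$ has an exceptional real zero $\beta_1$ close to $1$ with $\chi_1$ a real character of $G$, then quantitative zero repulsion forces every other non-trivial zero $\rho=\beta+i\gamma$ to satisfy
$$
1-\beta ~\gg~ \frac{\log\!\big((1-\beta_1)^{-1}\big)}{\log\big(Q(|\gamma|+3)^{n_K}\big)} .
$$
This is the Deuring--Heilbronn phenomenon: one proves it by a weighted explicit-formula inequality resting on the non-negativity of a suitable product of $L$-functions twisted by $\chi_1$ (available exactly because $\chi_1$ is real), combined with the classical positivity trick. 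Feeding this repulsion back into the mean-value count pushes the zeros of interest away from the line $\sigma=1$ and converts the logarithm into the gain recorded by $B_1$; when no exceptional zero exists one simply takes $B_1=1$. The genuinely delicate point throughout is to keep every implied constant absolute — in particular uniform in $n_K$, $D_K$ and $\cQ$ — which is what obliges one to track the analytic conductor $QT^{n_K}$ in every estimate and to use the number-field large sieve with explicit constants, as carried out in \cite{We} and \cite{TZ}.
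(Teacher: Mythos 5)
Your sketch has to be measured against the sources the paper leans on, because the paper itself gives no proof of \thmref{ZerDen}: the statement is imported verbatim from Thorner--Zaman \cite[Theorem 3.2]{TZ} (see also Weiss \cite[Theorem 4.3]{We}). Your plan does identify the two genuine ingredients behind those results, namely a zero-detection/mean-value count and the Deuring--Heilbronn repulsion attached to a real exceptional zero, but it breaks down at the decisive point: log-freeness. The mollifier-plus-large-sieve scheme you describe only yields, as you state, $N(\sigma,T)\ll (QT^{n_K})^{c(1-\sigma)}\log(QT^{n_K})$ (the spurious logarithm coming from your thinning of the zeros to a well-spaced set), and you propose to delete this logarithm afterwards by feeding in repulsion. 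That cannot work: when no exceptional zero exists one has $B_1=1$, Deuring--Heilbronn is vacuous, and the stray factor $\log(QT^{n_K})$ remains; it cannot be absorbed into $(QT^{n_K})^{c_2(1-\sigma)}$ precisely in the critical range $1-\sigma\ll \log\log(QT^{n_K})/\log(QT^{n_K})$, which is exactly the range these estimates exist for and the range the present paper needs (in Section 3 the entire saving is the constant $e^{-3\tilde{c}_1 c_2}$ obtained from a log-free bound after the choice $T=Q^{-1/n_E}x^{1/(4c_2 n_E)}$; an extra $\log(QT^{n_E})$ there would destroy the argument). In \cite{We}, and in the explicit treatments that \cite{TZ} builds on, the log-free quality is produced inside the detection and counting mechanism itself (Tur\'an--Fogels power sums with carefully chosen kernels), and the factor $B_1$ is obtained by running the exceptional zero through that already log-free machinery, not by cleaning up a lossy count a posteriori. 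Note also that your repulsion step applies only to zeros other than $\beta_1$, and indeed the stated inequality can only be correct with the exceptional zero excluded from the count (for $\sigma$ just below $\beta_1$ one has $N(\sigma,T)\ge 1$ while $B_1(QT^{n_K})^{c_2(1-\sigma)}$ can be arbitrarily small); this is how the paper actually uses the theorem, through $N^*(\sigma,T)$.

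Two further points are treated as black boxes but carry real weight. The mean-value inequality you invoke, with the factor $Z+R\,QT^{n_K}$ and implied constants absolute in $n_K$, $D_K$ and $\cQ$, is not an off-the-shelf tool: establishing such uniformity over number fields is the principal technical content of Weiss's paper, so citing it as ``a large sieve bound of the type of Huxley and Weiss'' assumes most of what is to be proved. And your handling of the zeros very near $\Re s=1$ via the classical count $\ll T\log(Q(|T|+3)^{n_K})$ is again too lossy for a log-free conclusion. As it stands the proposal is a reasonable roadmap to the literature, but not a proof of \thmref{ZerDen}.
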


\subsection{Chebotarev density theorem in short intervals}
Let $L/K$ be a Galois extension of number fields 
with Galois group $G$. Let $n_L= [L : \Q]$ and $n_K = [K:\Q]$. 
Also let $D_L$ (resp. $D_K$) denote the absolute discriminant 
of $L$ (resp. K). 
For a conjugacy class $C \subseteq G$, 
define  
$$
\pi_C(x, L/K) 
~=~ 
\#\{\fp  \subseteq \cO_K ~:~ N_K(\fp) \leq x,~ 
\fp \text{ is unramified in } L 
\text{ and } [\sigma_\fp] = C\},
$$
where $\sigma_\fp$ is a Frobenius element of $\fp$ in $G$ 
and $[\sigma_\fp]$ denotes the conjugacy class of $\sigma_\fp$ in $G$.
In \cite{BO}, Balog and Ono  proved the following theorem.
\begin{thm}\label{B0-Cheb}
Let $\e >0$ be a real number 
and $x^{1 -1/c(L)  + \e} \leq y \leq x$, 
then we have
$$
(1-\e) \frac{\#C}{\#G} \frac{y}{\log x} 
~<~ 
\pi_C\(x+y, L/K\) - \pi_C\(x, L/K\) 
~<~
 (1+\e) \frac{\#C}{\#G} \frac{y}{\log x}
$$
for all sufficiently large $x$ depending on $\e$ and $L$. 
Here
$$
c(L) ~=~
\begin{cases}
& n_L \phantom{mm}\text{if } n_L \geq 3, \\
& \frac{8}{3} \phantom{mm}\text{if } n_L = 2, \\
& \frac{12}{5}  \phantom{mm}\text{if } n_L = 1.
\end{cases}
$$ 
\end{thm}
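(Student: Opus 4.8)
The plan is to deduce the statement from the prime ideal theorem in short intervals for Hecke $L$-functions, via the Deuring reduction and the truncated explicit formula, controlling the zeros through \thmref{zerofreez} and \thmref{ZerDen}. First I would pass to the weighted counting function
$$
\psi_C(x, L/K) ~=~ \sum_{\substack{N_K(\fp^m) \le x,~ \fp \text{ unramified in } L \\ [\sigma_{\fp}^m] = C}} \log N_K(\fp);
$$
the terms with $m \ge 2$ contribute $O(\sqrt{x}\log x)$, which is negligible since $y \ge x^{1 - 1/c(L)} \ge x^{7/12}$, and by partial summation it suffices to prove that $\psi_C(x+y, L/K) - \psi_C(x, L/K) = \frac{\#C}{\#G}\, y\,(1 + o(1))$ as $x \to \infty$. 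Next I would fix $g \in C$, set $H = \langle g\rangle$ and $E = L^H$, and use Deuring's reduction to write $\psi_C(x, L/K)$ as a fixed linear combination of $\psi_E(x)$, with coefficient $\frac{\#C}{\#G}$, and of the functions $\psi(x, \chi) = \sum_{N_E(\fa) \le x} \Lambda_E(\fa)\chi(\fa)$ attached to the nontrivial characters $\chi$ of the cyclic group $H = \mathrm{Gal}(L/E)$ (here $\Lambda_E$ is the ideal von Mangoldt function of $E$, and $\chi$ is identified with the associated Hecke character of $E$), up to an error $O(\sqrt{x}\log x)$ from ramified primes and primes of residue degree $\ge 2$. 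Applied to the factorisation of $\zeta_L$ over $E$, \thmref{zerofreez} and \thmref{ZerDen} then govern the zeros of each $L(s,\chi,L/E)$.

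The main term $y$ comes from the trivial character, via $\psi_E(x+y) - \psi_E(x) = y\,(1 + o(1))$, and it remains to show that every nontrivial $\chi$ contributes $o(y)$. For such $\chi$ the truncated explicit formula gives, for any $1 \le T \le x$,
$$
\psi(x+y, \chi) - \psi(x, \chi) ~=~ -\sum_{\substack{L(\rho, \chi, L/E) = 0 \\ \rho = \beta + i\gamma,~ |\gamma| \le T}} \frac{(x+y)^\rho - x^\rho}{\rho} ~+~ O\!\left( \frac{x(\log Qx)^2}{T} \right).
$$
I would take $T$ of size roughly $x/y$ times a fixed power of $\log x$, so that the error term is $o(y)$, and then estimate the sum over zeros by combining \thmref{zerofreez} — which confines every zero with $|\gamma|\le T$ to $\beta \le 1 - c_1/\log(QT^{n_E})$, handles the low-lying zeros directly, and forces $x^{\beta-1}$ to be small — with the density estimate of \thmref{ZerDen}, after a dyadic decomposition in $|\gamma|$ and the elementary bound $\bigl|\tfrac{(x+y)^\rho - x^\rho}{\rho}\bigr| \le \min\bigl(y\,x^{\beta-1},\, 2x^\beta/|\gamma|\bigr)$, which reduces matters to estimating $\sum_{|\gamma| \le V} x^\beta$ by Riemann--Stieltjes integration against $N(\sigma, V)$. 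The possible Landau--Siegel zero $\beta_1$ is absorbed via the factor $B_1$ in \thmref{ZerDen}, together with the fact that, $L$ being fixed, $1 - \beta_1$ is a fixed positive constant; this is also why the threshold for $x$ is ineffective in $L$.

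The whole estimate is arranged so that the contribution of the nontrivial zeros is $o(y)$ exactly when $y \ge x^{1 - 1/c(L) + \e}$: the exponent is pinned down by the interplay between the constraint $T \gg x/y$ from the explicit formula and the density exponent of \thmref{ZerDen} applied with the analytic conductor $QT^{n_E}$, optimised over the dyadic ranges of $|\gamma|$. For $n_L \ge 3$ the general estimate of \thmref{ZerDen} gives $c(L) = n_L$; for $n_L \le 2$ one substitutes sharper zero-density estimates available for Dedekind zeta functions of degree at most $2$, which is precisely the source of the improved values $\tfrac{12}{5}$ (degree $1$, Huxley) and $\tfrac{8}{3}$ (degree $2$). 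I expect the main obstacle to be exactly this last point: carrying out the zero-sum estimate delicately enough that the bound is genuinely $o(y)$ rather than merely $O(y)$ — this is the technical heart of Ingham--Huxley-type short-interval results, and is where both the zero-free region and the density estimate must be used at full strength — while simultaneously extracting the sharp exponent in every dyadic range; the reduction steps above, by contrast, are formal. Once $\psi_C(x+y, L/K) - \psi_C(x, L/K) = \frac{\#C}{\#G}\, y\,(1 + o(1))$ is in hand, partial summation yields the stated two-sided estimate for $\pi_C(x+y, L/K) - \pi_C(x, L/K)$ with any fixed $\e > 0$, once $x$ is large in terms of $\e$ and $L$.
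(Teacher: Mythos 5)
First, a structural point: the paper never proves \thmref{B0-Cheb} — it is quoted from Balog--Ono \cite{BO} — and the only argument of this type carried out in the paper is the proof of the uniform variant \thmref{Cheb-short} in Section 3.1. Your skeleton (pass to the weighted count, Deuring reduction with $H=\langle g\rangle$, $E=L^{H}$, the Lagarias--Odlyzko truncated explicit formula, then \thmref{zerofreez} plus a zero-density estimate) is exactly that skeleton, so the architecture is right and matches both \cite{BO} and the paper's own variant.

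The gap is in the step you yourself call the technical heart, and it is not merely a matter of "carrying out the estimate delicately". Your claim that for $n_L\ge 3$ ``the general estimate of \thmref{ZerDen} gives $c(L)=n_L$'' is false as stated: \thmref{ZerDen} has an unspecified absolute constant $c_2\ge 1$ in the exponent, so with $T\asymp (x/y)(\log x)^{2}$ the requirement $\(QT^{n_E}\)^{c_2}\le x^{1-\delta}$ only yields $y\ge x^{1-c/n_L}$ for some uncontrolled absolute $c$ --- which is precisely the weaker conclusion the paper proves as \thmref{Cheb-short}, and with a fixed relative error ($1/4$, or any fixed small constant) rather than $1\pm\e$ for every $\e$. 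The sharp thresholds $c(L)=n_L,\ 8/3,\ 12/5$ come from quantitatively specific density estimates (Huxley for $n_L=1$, the degree-two input for $n_L=2$, and a mean-value/density theorem for $\zeta_L$ with the correct exponent for $n_L\ge3$), none of which are among the tools you invoke, and you do not identify them beyond naming Huxley. There is also a second, independent obstruction to the asymptotic $(1\pm\e)$: for zeros of height comparable to $T=x^{\theta}$, the classical region of \thmref{zerofreez} (with $L$ fixed) only gives $x^{\beta-1}\le e^{-c_1/(n_E\theta)}$, a fixed constant; if the density estimate carries powers of $\log$, the partial-summation bound for $\sum_{\rho}x^{\beta-1}$ is then not even $O(1)$, and if it is log-free one still only gets a fixed constant, not something $<\e$ for arbitrary $\e$ at the fixed interval exponent $1-1/c(L)+\e$. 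Making the zero contribution genuinely $o(y)$ requires an additional input you never supply --- a log-free density estimate with the sharp exponent together with a Littlewood/Vinogradov-type zero-free region for $\zeta_L$ (available since $L$ is fixed), which is where the actual content of Balog--Ono lies. The reduction steps, the treatment of prime powers and ramified primes, and the remark that the possible Siegel zero is harmless for fixed $L$ are all fine.
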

For our application, we need a version of \thmref{B0-Cheb} 
which is uniform in $L$. 
In Section \ref{proofsec}, we prove the following explicit version 
of the Chebotarev density theorem in short intervals.

\begin{thm}\label{Cheb-short}
There exists a positive absolute constant $c_3$ such that 
if $y \geq x^{1-c_3/n_L}$ and 
$\log x \gg_{c_3} \log \(D_L n_L^{n_L})\)$, 
then we have	
$$
\bigg| 
\pi_C\(x+y, L/K\) - \pi_C\(x, L/K\)  
~-~
\frac{\#C}{\#G} 
\(\frac{y}{\log x} - \theta_1 \frac{(x+y)^{\beta_1} 
- x^{\beta_1}}{\beta_1 \log x}\)\bigg| 
~\leq~
 \frac{1}{4} \frac{\#C}{\#G} \frac{y}{\log x}.
$$
Here $\theta_1 \in \{-1, 1\}$ if the Landau-Siegel zero of 
the Dedekind zeta function $\zeta_L(s)$ exists 
and $\theta_1 = 0$ otherwise.
\end{thm}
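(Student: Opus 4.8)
The strategy is the classical one: pass from the prime-counting function $\pi_C(x,L/K)$ to a weighted sum over prime ideals via an explicit formula, estimate that sum using the zero-density theorem (\thmref{ZerDen}) together with the zero-free region (\thmref{zerofreez}), and then reinterpret the result back in terms of $\pi_C$. The key point — and the reason this is \emph{not} just a restatement of Balog--Ono — is that every bound must be made completely explicit in the conductor data $D_L$ and $n_L$, and the implied constants must be absolute. So I would work throughout with the function
$$
\psi_C(x,L/K) ~=~ \frac{\#C}{\#G} \sum_{\substack{\fp^m : N_K(\fp^m)\le x \\ \fp \text{ unram.}, [\sigma_{\fp^m}]=C}} \log N_K(\fp),
$$
and first establish a short-interval estimate $\psi_C(x+y,L/K) - \psi_C(x,L/K)$ with a main term $\frac{\#C}{\#G}\bigl(y - \theta_1 \frac{(x+y)^{\beta_1}-x^{\beta_1}}{\beta_1}\bigr)$ and an error controlled explicitly.

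\textbf{Step 1 (Reduction to characters and the explicit formula).} Using the decomposition of the induced representation and orthogonality over conjugacy classes, write $\psi_C$ as a linear combination of the functions $\psi(x,\chi,L/K) = \sum_{N_K(\fp^m)\le x}\chi(\sigma_{\fp^m})\log N_K(\fp)$ attached to irreducible characters $\chi$ of $G$ (after reducing to the abelian/cyclic case by passing to a cyclic subgroup generated by an element of $C$, as in the standard Chebotarev argument — here I would use that the fixed field of $\langle g\rangle$ for $g\in C$ controls the relevant conductors, keeping track that the discriminant of that field is bounded in terms of $D_L$). Then apply the truncated explicit formula for each $\psi(x,\chi,L/K)$: for a parameter $T\ge 1$,
$$
\psi(x,\chi,L/K) ~=~ \delta_\chi\, x ~-~ \sum_{|\gamma|\le T} \frac{x^\rho}{\rho} ~+~ O\!\left(\frac{x(\log(Qx))^2}{T} + \log x\right),
$$
the sum over nontrivial zeros $\rho=\beta+i\gamma$ of $L(s,\chi,L/K)$ with $|\gamma|\le T$, where the $O$-constant is absolute. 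Subtracting the formulas at $x+y$ and $x$, the main term contributes $y$ (times $\delta_\chi$), and each zero term contributes $\frac{(x+y)^\rho - x^\rho}{\rho}$, which is $\ll y\, x^{\beta-1}$ trivially and also $\ll x^\beta/|\gamma|$.

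\textbf{Step 2 (Summing over zeros via the zero-density estimate).} Split the zeros by $\beta$. Using $\bigl|(x+y)^\rho - x^\rho\bigr| \le \min(y x^{\beta-1}, 2x^\beta/|\gamma|)$ and partial summation against the counting function $N(\sigma,T)$ from \thmref{ZerDen}, bound $\sum_\rho \frac{(x+y)^\rho-x^\rho}{\rho}$ by an integral
$$
\ll ~ y \int_0^{1} x^{\sigma-1} (\log(Q T^{n_K})) \, (QT^{n_K})^{c_2(1-\sigma)} \, B_1 \, \log(QT^{n_K})\, d\sigma
$$
plus the contribution of zeros with large $|\gamma|$. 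Choosing $T$ a suitable power of $x$ (so that $x(\log Qx)^2/T$ is negligible relative to $y$), and using the hypotheses $y\ge x^{1-c_3/n_L}$ and $\log x \gg \log(D_L n_L^{n_L})$ — hence $\log Q \ll \log x$ with a controllable constant and $(QT^{n_K})^{c_2(1-\sigma)} \le x^{C c_2 (1-\sigma)}$ for an absolute $C$ — the integrand decays geometrically in $1-\sigma$, so the whole expression is $\le \varepsilon_0\, y$ for any prescribed $\varepsilon_0$ provided $c_3$ is chosen small enough (this is exactly where $c_3$ gets pinned down). The exceptional zero $\beta_1$, if present, is pulled out separately and produces the term $-\theta_1\frac{(x+y)^{\beta_1}-x^{\beta_1}}{\beta_1}$; the factor $B_1 = \min\{1,(1-\beta_1)\log(QT^{n_K})\}$ is precisely what damps the contribution of all the \emph{other} zeros when $\beta_1$ is close to $1$.

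\textbf{Step 3 (From $\psi_C$ back to $\pi_C$).} Remove the prime-power terms ($m\ge2$) from $\psi_C$, which contribute $O(\sqrt{x+y}\,\log x)$ — negligible since $y\ge x^{1-c_3/n_L}\ge \sqrt x$ (for $c_3$ small and $n_L$ bounded below by $1$, but note $n_L\ge 1$ always, so one needs $c_3\le 1/2$ here, or rather handles small $n_L$ directly). Then pass from $\theta_C(x) = \sum\log N_K(\fp)$ to $\pi_C(x)$ by partial summation, dividing by $\log x$: the main term $y$ becomes $y/\log x$, the exceptional term becomes $\theta_1\frac{(x+y)^{\beta_1}-x^{\beta_1}}{\beta_1\log x}$, and the accumulated error is $\le \varepsilon_0' \frac{\#C}{\#G}\frac{y}{\log x}$, which we arrange to be $\le \frac14\frac{\#C}{\#G}\frac{y}{\log x}$ by the choice of $c_3$ and the lower bound on $\log x$.

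\textbf{Main obstacle.} The delicate part is Step~2: tracking \emph{all} constants ($c_2$ from \thmref{ZerDen}, the $O$-constant in the explicit formula, the relation between $\log Q$ and $\log(D_L n_L^{n_L})$ under the hypothesis $\log x \gg \log(D_L n_L^{n_L})$, and the exponent $c_2 n_K$ appearing in $(QT^{n_K})^{c_2(1-\sigma)}$) carefully enough to show that a \emph{single absolute} $c_3$ makes the geometric series sum to something below $1/4$, uniformly over all number fields $L$. In particular one must be careful that $n_K$ can be as large as $n_L$, so $T^{n_K}$ is genuinely a power of $x$ whose exponent is bounded (using $T = x^\kappa$ with $\kappa$ chosen $\asymp 1/n_L$, which is compatible with the size of $y$); this interplay between the choice of $T$, the hypothesis on $y$, and the exponent $c_2 n_K(1-\sigma)$ is what forces the shape $y\ge x^{1-c_3/n_L}$ rather than a fixed power of $x$. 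Everything else — the reduction to cyclic subgroups, conductor--discriminant bookkeeping, and the partial summations — is routine.
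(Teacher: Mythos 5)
Your plan is essentially the paper's own argument: the paper likewise reduces to the cyclic subgroup $H=\langle g\rangle$ with fixed field $E=L^H$, applies the truncated explicit formula of Lagarias--Odlyzko (their Theorem 7.1) to get the main term plus a sum over zeros, bounds that sum by partial summation against $N^*(\sigma,T)$ using \thmref{zerofreez} and \thmref{ZerDen} with $T$ chosen as a power of $x$ of exponent $\asymp 1/(c_2 n_E)$, treats the Siegel zero $\beta_1$ separately, and uses $Q\le D_L n_L^{n_L}$ together with the hypotheses on $y$ and $\log x$ to pin down the absolute constant $c_3$. Your outline matches this in all essentials (the only cosmetic difference being that the paper tracks $n_E\le n_L$ rather than $n_K$ in the exponent), so no further comment is needed.
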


\begin{rmk}
The constant $\frac{1}{4}$ in \thmref{Cheb-short} 
can be replaced with any small positive real number (see Section 3.1).
\end{rmk}

\subsection{Hecke eigenforms and $\ell$-adic Galois representation}\label{Smod}
Let $f$ be as in section 1 and $m$ be a positive integer. 
For any integer $d > 1$ and real number $x >0$, 
let
\begin{equation*}
\begin{split}
&\pi_{f,m}(x,d) 
~=~
\#\{p \leq x : a_f(p^m) \equiv 0 ~(\text{ mod } d) \}.
\end{split}
\end{equation*}
Let $\text{Gal}(\overline{\Q}/\Q)$ be the Galois group of $\overline{\Q}/\Q$
and for a prime $\ell$, let $\Z_\ell$ 
denote the ring of $\ell$-adic integers. 
By the work of Deligne \cite{De}, 
there exists a continuous representation
\begin{equation*}
\rho_{d} ~:~ 
{\text{Gal}(\overline{\Q}/\Q)} ~\rightarrow~
{\GL}_2\(\prod_{\ell | d} \Z_\ell\)
\end{equation*}
which is unramified outside the primes dividing $dN$. 
Further, if $p \nmid dN$, then we have
$$
\text{tr}\rho_{d}(\sigma_p) 
~=~ a_f(p) 
\phantom{mm}\text{and}\phantom{mm} 
\text{det}\rho_{d}(\sigma_p) 
~=~ p^{k-1},
$$
where $\sigma_p$ is a Frobenius element of $p$ in $\text{Gal}(\overline{\Q}/\Q)$. 
Here $\Z$ is embedded 
diagonally in $\prod_{\ell | d} \Z_\ell$.
Let $\tilde{\rho}_{d}$ denote the reduction of $\rho_{d}$ modulo $d$ : 
\begin{equation*}
\tilde{\rho}_{d} ~:~ {\text{Gal}(\overline{\Q}/\Q)} 
~\xrightarrow[]{\rho_{d}}~
{\GL}_2\(\prod_{\ell | d} \Z_\ell\)
~\twoheadrightarrow~
{\GL}_2(\Z/ d\Z).
\end{equation*}
Also denote by $\tilde{\rho}_{d,m}$, the composition of 
$\tilde{\rho}_{d}$ with $Sym^m$, 
where $Sym^m$ denotes the symmetric $m$-th power map :
\begin{equation*}
\tilde{\rho}_{d,m} ~:~  {\text{Gal}(\overline{\Q}/\Q)} 
~\xrightarrow[]{\rho_{d}}~ 
{\GL}_2\(\prod_{\ell | d} \Z_\ell\)  
~\rightarrow{}~
{\GL}_2(\Z/ d\Z)  
~\xrightarrow[]{Sym^m}~
{\GL}_{m+1}(\Z/ d\Z).
\end{equation*}
For $p \nmid dN$, we have
\begin{equation*}
\text{tr}\tilde{\rho}_{d,m}(\sigma_{p}) ~=~ a_f(p^m)~ (\text{mod } d). 
\end{equation*}
Let $H_{d,m}$ be the kernel of $\tilde{\rho}_{d,m}$, 
$K_{d,m}$ be the subfield of $\overline{\Q}$ fixed by $H_{d,m}$ 
and 
$$
{\G}_{d,m} = \text{Gal}(K_{d,m}/\Q) \cong \text{Im}(\tilde{\rho}_{d, m}).
$$ 
Suppose that $C_{d,m}$ is the subset of 
$\tilde{\rho}_{d,m}(\text{Gal}(\overline{\Q}/\Q))$ consisting of elements of trace zero. 
Let us set $\delta_{m}(d) = \frac{|C_{d,m}|}{|{\G}_{d,m}|}$.
For any prime $p\nmid dN$, the condition 
$a_f(p^m) \equiv 0~ (\text{mod }d)$ is equivalent to
the fact that $\tilde{\rho}_{d,m}(\sigma_p) \in C_{d,m}$, 
where $\sigma_{p}$ is a Frobenius element of $p$ in $\text{Gal}(\overline{\Q}/\Q)$. 
Hence by the Chebotarev density theorem applied to $K_{d,m} / \Q$, 
we have
\begin{equation*}
\lim_{x \to \infty} \frac{\pi_{f,m}(x,d)}{\pi(x)}
~=~
\frac{|C_{d,m}|}{|{\G}_{d,m}|}
~=~ \delta_{m}(d).
\end{equation*}
Applying \thmref{Cheb-short}, we can now deduce the following result.
\begin{thm}\label{pifshort}
Let $f$ be a non-CM normalized cuspidal Hecke eigenform
of weight $k$ and level $N$ with integer Fourier coefficients 
$a_f(n)$ for $n \ge 1$. 
Then there exists a positive absolute constant $c_3$ 
such that if $y \geq x^{1- \frac{c_3}{d^4}}$ and 
$\log x \gg_{c_3} d^4 \log (dN)$, then 
$$
\pi_{f, m}\(x+y, d\) - \pi_{f, m}\(x, d\) 
~\ll~ 
\delta_m(d) \frac{y}{\log x}.
$$
\end{thm}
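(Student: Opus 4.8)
The plan is to apply \thmref{Cheb-short} to the extension $L = K_{d,m}$ over $K = \Q$, taking for $C$ the (conjugation-stable) trace-zero set $C_{d,m} \subseteq \G_{d,m}$, and then to sum the resulting estimate over the conjugacy classes contained in $C_{d,m}$. Write $n_L = [K_{d,m}:\Q] = |\G_{d,m}|$. Two structural facts must be recorded first. Since $\tilde{\rho}_{d,m} = Sym^m \circ \tilde{\rho}_{d}$, the kernel $H_{d,m}$ contains $H_d = \ker \tilde{\rho}_d$, so $K_{d,m} \subseteq K_d$ and $\G_{d,m}$ is a quotient of $\mathrm{Im}\,\tilde{\rho}_d \subseteq \GL_2(\Z/d\Z)$; hence
$$
n_L ~=~ |\G_{d,m}| ~\leq~ |\GL_2(\Z/d\Z)| ~=~ d^4\prod_{p \mid d}(1-p^{-1})(1-p^{-2}) ~<~ d^4 .
$$
Moreover, as $\rho_d$ is unramified outside the primes dividing $dN$, so are $\tilde{\rho}_d$ and $\tilde{\rho}_{d,m}$, so $K_{d,m}/\Q$ is ramified only at primes dividing $dN$.

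Next I would verify the hypotheses of \thmref{Cheb-short}. From $n_L < d^4$ one gets $1 - c_3/n_L < 1 - c_3/d^4$, so the assumption $y \geq x^{1 - c_3/d^4}$ gives $y \geq x^{1 - c_3/n_L}$. For the discriminant, if $p \mid dN$ has ramification index $e_p \mid n_L$ in $K_{d,m}$, the local different exponent is at most $e_p - 1 + e_p v_p(e_p)$, whence $v_p(D_{K_{d,m}}) \leq n_L(1 + v_p(e_p)) \leq n_L(1 + v_p(n_L))$; summing over $p \mid dN$,
$$
\log D_{K_{d,m}} ~\leq~ n_L\sum_{p \mid dN}(1 + v_p(n_L))\log p ~\leq~ n_L\big(\log(dN) + \log n_L\big) ~\ll~ d^4\log(dN).
$$
Therefore $\log\big(D_{K_{d,m}}\, n_L^{n_L}\big) = \log D_{K_{d,m}} + n_L\log n_L \ll d^4\log(dN)$, so the assumption $\log x \gg_{c_3} d^4 \log(dN)$, with the implied constant chosen large enough, forces $\log x \gg_{c_3} \log\big(D_{K_{d,m}} n_L^{n_L}\big)$, as needed.

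Finally I would assemble the estimate. The hypothesis $\log x \gg_{c_3} d^4\log(dN)$ makes $x$ far larger than $dN$, so no prime dividing $dN$ lies in $(x,x+y]$; hence $\pi_{f,m}(x+y,d) - \pi_{f,m}(x,d) = \sum_{C \subseteq C_{d,m}}\big(\pi_C(x+y,K_{d,m}/\Q) - \pi_C(x,K_{d,m}/\Q)\big)$, the sum over conjugacy classes $C$ contained in $C_{d,m}$. Applying \thmref{Cheb-short} to each such $C$ and adding, using $\sum_{C \subseteq C_{d,m}}\#C = |C_{d,m}| = \delta_m(d)\,|\G_{d,m}|$, gives
$$
\bigg|\pi_{f,m}(x+y,d) - \pi_{f,m}(x,d) - \delta_m(d)\Big(\frac{y}{\log x} - \theta_1\frac{(x+y)^{\beta_1} - x^{\beta_1}}{\beta_1\log x}\Big)\bigg| ~\leq~ \tfrac14\,\delta_m(d)\frac{y}{\log x}.
$$
Since $0 < \beta_1 < 1$, the mean value theorem gives $(x+y)^{\beta_1} - x^{\beta_1} \leq \beta_1 x^{\beta_1-1}y \leq \beta_1 y$, so the Landau--Siegel term is at most $\delta_m(d)\frac{y}{\log x}$ in absolute value; as $|\theta_1|\leq 1$, the three bounds combine to $\pi_{f,m}(x+y,d) - \pi_{f,m}(x,d) \leq \tfrac94\,\delta_m(d)\frac{y}{\log x} \ll \delta_m(d)\frac{y}{\log x}$, with an absolute implied constant. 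The only step requiring genuine care is the uniform bound on $n_L$ and on $D_{K_{d,m}}$ in terms of $d$ and $N$, since that is exactly what produces the exponent $d^4$ and the condition $\log x \gg_{c_3} d^4\log(dN)$; everything else is a direct invocation of \thmref{Cheb-short} plus bookkeeping over the conjugacy classes inside $C_{d,m}$.
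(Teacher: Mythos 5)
Your proposal is correct and follows essentially the same route as the paper: the paper deduces this theorem directly from \thmref{Cheb-short} applied to $K_{d,m}/\Q$ with the trace-zero set $C_{d,m}$, using the Galois-representation setup of Section \ref{Smod}. Your added details (the bound $n_L \leq |\GL_2(\Z/d\Z)| < d^4$, the discriminant estimate $\log D_{K_{d,m}} \ll d^4 \log(dN)$ from ramification only at $p \mid dN$, summation over the conjugacy classes in $C_{d,m}$, and the trivial bound on the Landau--Siegel term) are exactly the bookkeeping the paper leaves implicit.
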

When $m=1$, we have the following result 
(see \cite[Proof of Theorem 3]{GM}, 
\cite[Lemma 5.4]{MMprime}, \cite[Section 4]{SeCh}).
\begin{lem}\label{deltal}
For any prime $\ell$, we have
$$
\delta(\ell) ~=~ \frac{1}{\ell} +O\(\frac{1}{\ell^2}\) 
\phantom{mm}\text{and}\phantom{mm}
\delta(\ell^n) ~=~O\(\frac{1}{\ell^n}\)
$$
for any $n \in \N$. Here $\delta(\ell) = \delta_1(\ell)$.
\end{lem}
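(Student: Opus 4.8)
The plan is to pass to the image of the mod-$\ell^n$ representation attached to $f$ and reduce the computation of $\delta(\ell^n) = |C_{\ell^n,1}|/|\G_{\ell^n,1}|$ to a count of trace-zero matrices — first over $\Z/\ell\Z$, and then over $\Z/\ell^n\Z$ by a fibre count along reduction. The one non-elementary ingredient I would invoke is the following. Since $f$ is non-CM with integer (hence rational) Fourier coefficients it has no nontrivial inner twist, so by the theorem of Ribet in the form used in \cite{GM}, \cite{MMprime}, \cite{SeCh} there is a finite set $S_f$ of primes, depending only on $f$, such that for all $\ell \notin S_f$ and all $n \geq 1$,
$$
\G_{\ell^n,1} ~=~ \{ g \in \GL_2(\Z/\ell^n\Z) : \det g \in H_n \},
\qquad
H_n := \{ x^{k-1} : x \in (\Z/\ell^n\Z)^{\times} \}.
$$
Indeed $\det\tilde\rho_{\ell^n}$ is the $(k-1)$-st power of the surjective mod-$\ell^n$ cyclotomic character, so $\det$ carries the image onto $H_n$, while the image contains $\SL_2(\Z/\ell^n\Z)$. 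Enlarging $S_f$ to contain every prime dividing $k-1$, I may assume $\gcd(k-1,\ell)=1$ for $\ell\notin S_f$; then the subgroup $1+\ell\Z/\ell^n\Z$, being of order coprime to $k-1$, lies entirely in $H_n$.

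Next I would compute $\delta(\ell)$ for $\ell\notin S_f$. A trace-zero matrix in $\GL_2(\Z/\ell\Z)$ is $\begin{pmatrix} a & b \\ c & -a\end{pmatrix}$ with determinant $-(a^2+bc)$, so
$$
|C_{\ell,1}| ~=~ \#\{(a,b,c)\in(\Z/\ell\Z)^3 ~:~ a^2+bc\in -H_1\}.
$$
For any fixed unit $w$, an elementary count — splitting on whether $w$ is a square, the locus $bc=0$ being responsible for the error — gives $\#\{(a,b,c): a^2+bc=w\}=\ell^2+O(\ell)$ with absolute implied constant. Summing over the $|H_1|$ units of $-H_1$ yields $|C_{\ell,1}|=|H_1|\ell^2(1+O(1/\ell))$, while $|\G_{\ell,1}|=|\SL_2(\Z/\ell\Z)|\,|H_1|=\ell(\ell^2-1)|H_1|$, whence
$$
\delta(\ell) ~=~ \frac{\ell+O(1)}{\ell^2-1} ~=~ \frac1\ell+O\(\frac1{\ell^2}\).
$$

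Then I would bootstrap to $\delta(\ell^n)$ via the reduction map $\pi\colon \G_{\ell^n,1}\twoheadrightarrow \G_{\ell,1}$. Because $1+\ell\Z/\ell^n\Z\subseteq H_n$, every matrix congruent mod $\ell$ to an element of $\G_{\ell,1}$ already lies in $\G_{\ell^n,1}$, so each fibre of $\pi$ is a full coset $g_0+\ell\,M_2(\Z/\ell^{n-1}\Z)$ of size $\ell^{4(n-1)}$. Over a trace-zero class $\bar g$ the trace is $\operatorname{tr}(g_0)+\ell\operatorname{tr}(M)$ with $\operatorname{tr}(g_0)\in\ell\Z/\ell^n\Z$; since $\operatorname{tr}(M)$ is equidistributed over $\Z/\ell^{n-1}\Z$, exactly $\ell^{3(n-1)}$ of the matrices in the fibre have vanishing trace in $\Z/\ell^n\Z$. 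Hence $|C_{\ell^n,1}|=\ell^{3(n-1)}|C_{\ell,1}|$ and $|\G_{\ell^n,1}|=\ell^{4(n-1)}|\G_{\ell,1}|$, so
$$
\delta(\ell^n) ~=~ \ell^{-(n-1)}\delta(\ell) ~=~ \frac1{\ell^n}+O\(\frac1{\ell^{n+1}}\) ~=~ O\(\frac1{\ell^n}\).
$$
For the finitely many $\ell\in S_f$ both bounds are immediate from $\delta(\ell^n)\leq 1$, at the cost of letting the implied constants depend on $S_f$ (and, for the second estimate, on $n$) — that is, on $f$ and $n$, as permitted.

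The only genuine obstacle is the description of $\G_{\ell^n,1}$ for almost all $\ell$: this is exactly where the non-CM hypothesis is essential and where one must be careful (e.g. about inner twists), but it is the content of the results cited in \cite{GM}, \cite{MMprime}, \cite{SeCh}, so I would simply quote it. Everything else is routine counting, and the only point needing attention is that the error term $O(\ell)$ in the ternary count is uniform in $\ell$ (it is, with absolute constant).
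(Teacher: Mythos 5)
The paper itself gives no proof of \lemref{deltal} -- it is quoted from \cite{GM}, \cite{MMprime}, \cite{SeCh} -- and your argument for the non-exceptional primes is essentially the standard one underlying those references: non-CM with trivial nebentypus and rational coefficients rules out inner twists, Ribet's theorem then identifies $\G_{\ell^n,1}$ with $\{g\in\GL_2(\Z/\ell^n\Z):\det g\in H_n\}$ for $\ell\notin S_f$, and the two counts you perform (the ternary count of $a^2+bc=w$ over $\Z/\ell\Z$, and the fibre count $|C_{\ell^n,1}|=\ell^{3(n-1)}|C_{\ell,1}|$, $|\G_{\ell^n,1}|=\ell^{4(n-1)}|\G_{\ell,1}|$ using $1+\ell\Z/\ell^n\Z\subseteq H_n$) are correct, including the uniformity of the $O(\ell)$ error. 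This part I would accept as written.

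The genuine gap is your treatment of the finitely many exceptional primes $\ell\in S_f$. There you fall back on $\delta(\ell^n)\le 1$ and explicitly let the implied constant in $\delta(\ell^n)=O(1/\ell^n)$ depend on $n$ (it is of size $\ell^n$). That is not the statement the paper needs: in \eqref{pifm01}, \eqref{vxq1}, \eqref{pifm0}, etc., the bound is summed over $m$ with $1\le m\le m_0$ where $m_0=[\log z/\log\ell]\to\infty$ as $x\to\infty$, and a small prime such as $\ell=2$ may well lie in $S_f$; so the lemma must hold with an implied constant depending only on $f$, uniformly in $n$ (otherwise $\sum_{m\le m_0}\delta(\ell^m)\ll 1/\ell$ fails). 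The fix stays within your framework: for a non-CM form the $\ell$-adic image at an exceptional prime is still open in $\{g\in\GL_2(\Z_\ell):\det g\in(\Z_\ell^\times)^{k-1}\}$ (Serre, Ribet, Momose), hence contains $1+\ell^{a}M_2(\Z_\ell)$ for some $a=a(\ell,f)$; running your coset/fibre count with $1+\ell^{a}M_2$ in place of $1+\ell M_2$ shows that within each coset the proportion of elements with trace $\equiv 0\ (\mathrm{mod}\ \ell^n)$ is $\ell^{a-n}$ for $n>a$, so $\delta(\ell^n)\le \ell^{a}/\ell^n$ uniformly in $n$, and since $S_f$ is finite the constant depends only on $f$. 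This uniform version is exactly what \cite[Lemma 5.4]{MMprime} provides.
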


When $m+1$ is an odd prime $q$, the present authors 
in an earlier work (see \cite[Lemma 17, Lemma 18]{GN})
proved the following results.

\begin{lem}\label{deq-1l}
Let $q, \ell$ be primes with $q$ odd. Then 
$\delta_{q-1}(\ell) = 0$ unless 
$\ell \equiv 0, \pm 1 ~(\text{mod } q)$
and 
$$
\delta_{q-1}(\ell) ~\ll~ \frac{q}{\ell},
$$
where the implied constant depends only on $f$. Also we have
\begin{equation*}
\delta_{q-1}(\ell) 
~=~
\begin{cases}
& \frac{q-1}{2} \frac{1}{\ell-1}, 
\phantom{mm}\text{if}\phantom{mm} 
\ell \equiv 1~ (\text{ mod } q) \\
& \frac{q-1}{2}   \frac{1}{\ell+1}, 
\phantom{mm}\text{if}\phantom{mm} 
\ell \equiv -1~ (\text{ mod } q) \\	
& \frac{q}{q^2 -1}, \phantom{mmm}\text{if} \phantom{mm} \ell = q 
\end{cases}
\end{equation*}
for all sufficiently large $\ell$.
\end{lem}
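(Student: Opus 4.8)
The plan is to reduce the statement to a group-theoretic count inside $\GL_2(\mathbb{F}_\ell)$, in the same spirit as the $m=1$ computation behind \lemref{deltal}. Unwinding the definitions of the preceding subsection — and noting that the kernel of $Sym^{q-1}$ restricted to $\mathrm{Im}(\tilde\rho_\ell)$ cancels between numerator and denominator — $\delta_{q-1}(\ell)$ is exactly the proportion of $g \in \mathrm{Im}(\tilde\rho_\ell) \subseteq \GL_2(\mathbb{F}_\ell)$ with $\mathrm{tr}\,Sym^{q-1}(g) = 0$. The first input is the classical description of the image: since $f$ is non-CM of level $N$ with trivial nebentypus, $\det\tilde\rho_\ell$ is the $(k-1)$-st power of the mod-$\ell$ cyclotomic character and $\mathrm{Im}(\tilde\rho_\ell) \supseteq \SL_2(\mathbb{F}_\ell)$ for all $\ell$ large in terms of $f$; hence for such $\ell$ the image equals $B_\ell := \{ g \in \GL_2(\mathbb{F}_\ell) : \det g \in (\mathbb{F}_\ell^\times)^{k-1}\}$.

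Next I would record the trace identity: if $g$ has eigenvalues $\alpha,\beta$ over $\overline{\mathbb{F}}_\ell$, then $\mathrm{tr}\,Sym^{q-1}(g) = \sum_{j=0}^{q-1}\alpha^j\beta^{q-1-j}$, equal to $(\alpha^q-\beta^q)/(\alpha-\beta)$ when $\alpha \neq \beta$ and to $q\alpha^{q-1}$ when $\alpha=\beta$. Therefore, for $\ell \neq q$ the trace vanishes exactly when $\alpha\neq\beta$ and $\alpha/\beta$ is a nontrivial $q$-th root of unity, while for $\ell = q$ it vanishes exactly when $\alpha = \beta$. Since the eigenvalues of $g \in \GL_2(\mathbb{F}_\ell)$ lie in $\mathbb{F}_\ell$ or $\mathbb{F}_{\ell^2}$, their ratio lies in $\mathbb{F}_{\ell^2}^\times$, which contains a nontrivial $q$-th root of unity iff $q \mid \ell^2-1$, i.e. iff $\ell \equiv \pm 1 \pmod q$. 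This already gives $\delta_{q-1}(\ell) = 0$ outright (not just $o(1)$) whenever $\ell \not\equiv 0,\pm 1 \pmod q$, since then the trace-zero set is empty already inside $\GL_2(\mathbb{F}_\ell)$.

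For the three remaining congruence classes I would count directly. If $\ell \equiv 1\pmod q$, then $\mu_q \subseteq \mathbb{F}_\ell^\times$ and the trace-zero $g$ are exactly the split regular semisimple elements with eigenvalue pair $\{\alpha,\beta\} \subseteq \mathbb{F}_\ell^\times$, $\alpha/\beta \in \mu_q\setminus\{1\}$; each has a split maximal torus as centralizer, hence conjugacy class size $\ell(\ell+1)$. Running over the $q-1$ admissible ratios (all squares, as $q$ is odd) and over one eigenvalue, and keeping track of $\det g \in (\mathbb{F}_\ell^\times)^{k-1}$, the count comes out to $\tfrac12(q-1)(\ell-1)\ell(\ell+1)/\gcd(k-1,\ell-1)$; dividing by $|B_\ell| = |\GL_2(\mathbb{F}_\ell)|/\gcd(k-1,\ell-1)$ the determinant factor cancels, giving $\frac{q-1}{2(\ell-1)}$. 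The case $\ell \equiv -1 \pmod q$ is the same computation with a nonsplit torus — the relevant $g$ have eigenvalues $\alpha,\alpha^\ell \in \mathbb{F}_{\ell^2}\setminus\mathbb{F}_\ell$ with $\alpha^{\ell-1}\in\mu_q\setminus\{1\}$ and conjugacy class size $\ell(\ell-1)$ — and yields $\frac{q-1}{2(\ell+1)}$; the case $\ell = q$ reduces to counting scalars and non-semisimple elements (scalar times a unipotent), giving $\frac{q}{q^2-1}$. Finally, the uniform bound $\delta_{q-1}(\ell) \ll_f q/\ell$ follows from these formulas, or more crudely from $|B_\ell| \gg_f \ell^4$ together with the fact that $\GL_2(\mathbb{F}_\ell)$ has $\ll q\ell^3$ elements whose eigenvalue ratio lies in $\mu_q$.

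The step I expect to be the real work is the determinant bookkeeping: one must check that replacing $\GL_2(\mathbb{F}_\ell)$ by $B_\ell$ does not change the proportion of trace-zero elements. This holds because a cuspidal eigenform on $\Gamma_0(N)$ has even weight, so $\gcd(k-1,\ell-1)$ is odd; hence $(\mathbb{F}_\ell^\times)^{k-1}$ meets the squares and the nonsquares of $\mathbb{F}_\ell^\times$ in the same ratio as $\mathbb{F}_\ell^\times$ itself, and since the trace-zero locus is carried by square determinants, the index $\gcd(k-1,\ell-1)$ cancels cleanly. The other ingredient — that $\mathrm{Im}(\tilde\rho_\ell) = B_\ell$ for $\ell$ large — is standard (Ribet, Momose; cf. the discussion surrounding \lemref{deltal}), and the finitely many excluded $\ell$ are harmless since the conclusion is only claimed for $\ell$ sufficiently large.
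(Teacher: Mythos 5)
Your proposal is correct, and it follows the standard route this lemma is actually proved by: reduce $\delta_{q-1}(\ell)$ to the proportion of $g$ in the mod-$\ell$ image with $\mathrm{tr}\,Sym^{q-1}(g)=0$, invoke the Ribet--Momose large-image theorem so the image is $\{\det \in (\mathbb{F}_\ell^\times)^{k-1}\}$ for large $\ell$, use the eigenvalue-ratio criterion (nontrivial $q$-th root of unity, forcing $\ell \equiv 0,\pm1 \pmod q$ unconditionally), and count the split, nonsplit and $\ell=q$ conjugacy classes, with the determinant condition cancelling because $k$ is even so $\gcd(k-1,\ell-1)$ is odd. Note the paper itself gives no proof here but cites the authors' earlier work \cite[Lemmas 17--18]{GN}, where the argument is this same group-theoretic computation, so your approach is essentially the same as the source.
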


\begin{lem}\label{dq-1lm}
For any integer $n \geq 2$ and primes $\ell, q$ with $q$ odd, we have
$$
\delta_{q-1}(\ell^n) 
~\ll~
\frac{1}{\ell^{n-1}} \delta_{q-1}(\ell),
$$
where the implied constant depends only on $f$.
We also have
$$
\delta_{q-1}(\ell^n) ~=~ \frac{1}{\ell^{n-1}} ~\delta_{q-1}(\ell)
$$
if $\ell \neq q$ and $\ell$ is sufficiently large.
Further $\delta_{q-1}(q^n) = 0$ for $q \geq 5$.
\end{lem}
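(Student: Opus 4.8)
The plan is to recast each $\delta_{q-1}(d)$ as a point count inside the mod-$d$ image of Galois and then to run a Hensel-lifting argument in trace--determinant coordinates. Let $S_m(t,\delta)\in\Z[t,\delta]$ be the integral polynomial determined by $\mathrm{tr}\,Sym^m(g)=S_m(\mathrm{tr}\,g,\det g)$, so that $S_0=1$, $S_1=t$, $S_{m+1}=tS_m-\delta S_{m-1}$; a short computation gives the factorisation
\[
S_{q-1}(t,\delta)=\prod_{\{\zeta,\zeta^{-1}\}}\bigl(t^2-c_\zeta\delta\bigr),\qquad c_\zeta=\zeta+\zeta^{-1}+2=\zeta^{-1}(\zeta+1)^2,
\]
the product running over the $(q-1)/2$ conjugate pairs of primitive $q$-th roots of unity. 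Since $\ker(Sym^{q-1})$ consists of scalar matrices it cancels in the relevant ratio, so for any modulus $d$, with $\G_d=\mathrm{Im}(\tilde\rho_d)\subseteq\GL_2(\Z/d\Z)$,
\[
\delta_{q-1}(d)=\frac{T(d)}{|\G_d|},\qquad T(d):=\#\{g\in\G_d:\ S_{q-1}(\mathrm{tr}\,g,\det g)\equiv 0\ (\mathrm{mod}\ d)\}.
\]
It then remains to understand $|\G_{\ell^n}|$ and $T(\ell^n)$.

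For the equality assertion I take $\ell\neq q$ with $\ell$ outside the finite set $\cE_f$ of primes where $f$ has non-surjective $\ell$-adic image; since $f$ has integer coefficients and is non-CM, the theorems of Serre and Ribet give $\SL_2(\Z_\ell)\subseteq\mathrm{Im}(\rho_{\ell^\infty})$, whence $\G_{\ell^n}=\{g\in\GL_2(\Z/\ell^n\Z):\det g\in((\Z/\ell^n\Z)^\times)^{k-1}\}$ and $|\G_{\ell^n}|=\ell^{4(n-1)}|\G_\ell|$. I count $T(\ell^n)$ fibre-wise over pairs $(t,\delta)$. First, since $\ell\neq q$, any zero of $S_{q-1}$ modulo $\ell$ with $\delta\neq 0$ satisfies $t^2-4\delta\not\equiv 0$ (otherwise $\delta=t^2/4\neq 0$, $t\neq 0$, and $S_{q-1}(t,t^2/4)=q(t/2)^{q-1}\neq 0$), i.e. is regular semisimple; for such a pair $\{g:\mathrm{tr}\,g=t,\ \det g=\delta\}$ is a single $\GL_2(\Z/\ell^n\Z)$-conjugacy class whose centraliser is the unit group of the \'etale algebra $(\Z/\ell^n\Z)[x]/(x^2-tx+\delta)$, so its size is $\ell^{2(n-1)}$ times that of the corresponding class mod $\ell$. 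Second, $S_{q-1}$ is squarefree modulo $\ell$: the $c_\zeta$ remain distinct since the minimal polynomial of $\zeta_q+\zeta_q^{-1}$ has discriminant a power of $q$, each component $\{t^2=c_\zeta\delta\}$ is smooth (as $c_\zeta\neq 0$ and $\ell$ is odd), and distinct components meet only at $(0,0)$; hence $\{S_{q-1}=0\}$ is smooth away from $(0,0)$ and $\nabla S_{q-1}$ is non-zero at every relevant zero. By Hensel's lemma each good pair mod $\ell$ then lifts to exactly $\ell^{n-1}$ pairs mod $\ell^n$ with admissible determinant, and multiplying by the fibre sizes yields $T(\ell^n)=\ell^{3(n-1)}T(\ell)$; therefore $\delta_{q-1}(\ell^n)=\ell^{-(n-1)}\delta_{q-1}(\ell)$.

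For the bound valid for all $\ell$, I split into cases. If $\ell\not\equiv 0,\pm 1\pmod q$ then $\delta_{q-1}(\ell)=0$ by \lemref{deq-1l}, and a lift of a trace-zero element of $\G_{\ell^n}$ is trace-zero in $\G_\ell$, so $\delta_{q-1}(\ell^n)=0$. For the finitely many $\ell\equiv\pm 1\pmod q$ lying in $\cE_f$ (and $\ell=q$ when $q=3$), the $\ell$-adic image stabilises from some level $n_0$, and re-running the Hensel argument relative to level $n_0$ — the smoothness input being unchanged — gives $\delta_{q-1}(\ell^n)\ll_\ell\ell^{-(n-1)}$, which suffices since $\delta_{q-1}(\ell)$ is then a fixed positive number. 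Finally, for $\ell=q$ with $q\ge 5$: modulo the prime $\mathfrak q\mid q$ of $\Z[\zeta_q]$ one has $\zeta\equiv 1$, hence $c_\zeta\equiv 4$ and $S_{q-1}\equiv(t^2-4\delta)^{(q-1)/2}\pmod q$ as polynomials; if some $g\in\G_{q^n}$ with $n\ge 2$ satisfied $S_{q-1}(\mathrm{tr}\,g,\det g)\equiv 0\pmod{q^2}$, then $q\mid t^2-4\delta$ where $t=\mathrm{tr}\,g$, $\delta=\det g$, and writing $t^2-c_\zeta\delta=(4-c_\zeta)\delta+(t^2-4\delta)$ with $v_{\mathfrak q}(4-c_\zeta)=v_{\mathfrak q}(\zeta^{-1}(\zeta-1)^2)=2$ while $v_{\mathfrak q}(t^2-4\delta)\ge v_{\mathfrak q}(q)=q-1\ge 4$, one gets $v_{\mathfrak q}(S_{q-1}(t,\delta))=q-1$, i.e. $v_q(S_{q-1}(t,\delta))=1$ by total ramification, contradicting $v_q\ge 2$. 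Hence $T(q^n)=0$ and $\delta_{q-1}(q^n)=0$.

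The heart of the matter is the squarefreeness/smoothness step in the second paragraph: the vanishing locus $\{S_{q-1}=0\}$ has no singular point with $\delta\neq 0$ modulo $\ell$. This is precisely what forces $T(\ell^n)$ to grow by the expected factor $\ell^{3(n-1)}$ between levels $\ell$ and $\ell^n$, it is where the dichotomy $\ell=q$ versus $\ell\neq q$ enters, and at $\ell=q$ its failure — the reduction becoming a perfect $(q-1)/2$-th power — is exactly what gets converted, through a valuation count, into the vanishing $\delta_{q-1}(q^n)=0$ for $q\ge 5$.
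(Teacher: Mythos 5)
Your overall strategy is sound, and it is the natural one for this statement: the paper itself gives no proof of \lemref{dq-1lm}, citing instead the authors' earlier work \cite[Lemmas 17, 18]{GN}, where the computation likewise goes through the mod-$\ell^n$ Galois image and the factorization of $a_f(p^{q-1})$ over $\Z[\zeta_q]$ (compare \eqref{afPsi}); your reformulation $\delta_{q-1}(d)=T(d)/|\G_d|$, the regular-semisimplicity observation $S_{q-1}(t,t^2/4)=q(t/2)^{q-1}$, the smoothness of $\{S_{q-1}=0\}$ away from $\delta=0$ for $\ell\neq q$ (using that $c_\zeta=\zeta^{-1}(1+\zeta)^2$ is a unit and that the $c_\zeta$ stay distinct since the discriminant is a power of $q$), and the $\mathfrak q$-adic valuation count giving $v_q(S_{q-1}(t,\delta))=1$ when $q\mid t^2-4\delta$ and $q\ge 5$ are all correct and give the equality case and the vanishing $\delta_{q-1}(q^n)=0$ cleanly.

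Two points in your treatment of the uniform upper bound need tightening. First, in the exceptional case ($\ell\in\cE_f$, or $\ell=q=3$) you conclude $\delta_{q-1}(\ell^n)\ll_\ell\ell^{-(n-1)}$ and assert this "suffices since $\delta_{q-1}(\ell)$ is then a fixed positive number"; positivity is not automatic, and if $\delta_{q-1}(\ell)=0$ your bound does not imply the lemma. The repair is the one-line reduction argument you already used for $\ell\not\equiv 0,\pm1\pmod q$: $S_{q-1}(\mathrm{tr}\,g,\det g)\equiv 0\pmod{\ell^n}$ forces the same congruence mod $\ell$ for the reduction of $g$, so $\delta_{q-1}(\ell)=0$ implies $\delta_{q-1}(\ell^n)=0$; state it in this generality. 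Relatedly, for the implied constant to depend only on $f$ (not on $q$) you must observe that for each fixed exceptional $\ell$ only finitely many $q$ occur, namely $q\mid \ell^2-1$ or $q=\ell$, so the exceptional pairs $(\ell,q)$ form a finite set depending only on $f$; you never say this, and without it "$\ll_\ell$" does not yield "$\ll_f$". Also, the phrase "re-running the Hensel argument relative to level $n_0$" should be backed by the (true, but unstated) facts that the image is open in the determinant-admissible group because $f$ is non-CM, and that at regular semisimple points the pair $(\mathrm{tr},\det)$ is equidistributed on the congruence fibres of $\G_{\ell^n}\to\G_{\ell^{n_0}}$. Second, in the equality case your identities $|\G_{\ell^n}|=\ell^{4(n-1)}|\G_\ell|$ and "each admissible $\delta_0$ has exactly $\ell^{n-1}$ admissible lifts" require $\ell$ odd and $\ell\nmid k-1$ in addition to $\ell\notin\cE_f$; this is harmless since the lemma only claims the equality for $\ell$ sufficiently large, but you should enlarge your excluded set accordingly rather than attribute everything to $\cE_f$.
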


Conditionally under GRH, i.e. assuming the generalized Riemann hypothesis 
for all Artin L-series, we can deduce the following theorem
 by applying a result of Lagarias and Odlyzko \cite[Theorem 1.1]{LO} 
 (see also \cite[Lemma 5.3]{MMprime}).
\begin{thm}\label{pifGRH}
	Suppose that GRH is true and $f$ is a non-CM form. 
	Then we have
	$$
	\pi_{f, m}(x, d) 
	~=~ \delta_m(d) \(\pi(x) ~+~ O\( x^{1/2} d^4 \log (dNx)\)\) 
	~+~ O\(d^4 \log (dN)\).
	$$
\end{thm}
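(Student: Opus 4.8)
The plan is to rewrite the congruence defining $\pi_{f,m}(x,d)$ as a Chebotarev condition for the number field $K_{d,m}$ and then invoke the conditional effective Chebotarev density theorem of Lagarias and Odlyzko \cite[Theorem 1.1]{LO}. As recalled above, for a prime $p \nmid dN$ the condition $a_f(p^m) \equiv 0 \pmod d$ is equivalent to $\tilde\rho_{d,m}(\sigma_p) \in C_{d,m}$, that is, to the Frobenius class of $p$ in $\G_{d,m} = \text{Gal}(K_{d,m}/\Q)$ lying in $C_{d,m}$. Discarding the $O(\log(dN))$ primes dividing $dN$, one obtains
$$
\pi_{f,m}(x,d) ~=~ \pi_{C_{d,m}}\(x, K_{d,m}/\Q\) ~+~ O\(\log(dN)\),
$$
where $\pi_{C_{d,m}}$ is understood, as before, as the sum of $\pi_C$ over the conjugacy classes making up the conjugation-stable set $C_{d,m}$.

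Next I would apply \cite[Theorem 1.1]{LO} with $L = K_{d,m}$, $K = \Q$ and the class $C_{d,m}$. Its hypothesis, the Riemann hypothesis for $\zeta_{K_{d,m}}$, follows from GRH for Artin $L$-series via the factorisation of $\zeta_{K_{d,m}}$ into Artin $L$-functions. Writing $\delta_m(d) = |C_{d,m}|/|\G_{d,m}|$ and $n = [K_{d,m}:\Q]$, this yields
$$
\pi_{C_{d,m}}\(x, K_{d,m}/\Q\) ~=~ \delta_m(d)\, \mathrm{Li}(x) ~+~ O\(\delta_m(d)\, x^{1/2} \log\(D_{K_{d,m}} x^{n}\) + \log D_{K_{d,m}}\).
$$

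It remains to control $n$ and $D_{K_{d,m}}$. Since $\tilde\rho_{d,m} = Sym^m \circ \tilde\rho_d$ factors through $\GL_2(\Z/d\Z)$, the group $\G_{d,m}$ is a homomorphic image of a subgroup of $\GL_2(\Z/d\Z)$, so $n = |\G_{d,m}| \leq |\GL_2(\Z/d\Z)| < d^4$; in particular $n$ is bounded independently of $m$. The field $K_{d,m}$ lies inside the fixed field of $\ker\tilde\rho_d$ and is therefore unramified outside the primes dividing $dN$; estimating the exponent of each such prime in the different (tame primes contributing at most $n-1$, the wild primes — which divide $n < d^4$ — contributing an extra $O(n\log n)$ in total) gives $\log D_{K_{d,m}} \ll n \log(dN\, n) \ll d^4 \log(dN)$. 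Substituting these bounds, and using $\log\(D_{K_{d,m}} x^{n}\) \ll d^4 \log(dNx)$, the error term becomes $O\(\delta_m(d)\, x^{1/2} d^4 \log(dNx)\) + O\(d^4 \log(dN)\)$. Finally, replacing $\mathrm{Li}(x)$ by $\pi(x)$ costs $O(x^{1/2}\log x)$ by the Riemann hypothesis for $\zeta(s)$ (again a consequence of GRH), which is absorbed after writing $\delta_m(d)\mathrm{Li}(x) = \delta_m(d)(\pi(x) + O(x^{1/2}\log x))$; the residual $O(\log(dN))$ from the discarded ramified primes is absorbed into $O(d^4\log(dN))$. Collecting terms gives the stated identity.

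The main, and essentially only, point requiring care is the explicit discriminant bound: one must verify that wild ramification, which occurs at primes dividing the possibly large group order $|\G_{d,m}|$, still contributes only $O(d^4\log(dN))$, and that no dependence on $m$ enters. The latter is guaranteed precisely because $\tilde\rho_{d,m}$ factors through the $m$-independent group $\GL_2(\Z/d\Z)$ rather than sitting a priori inside $\GL_{m+1}(\Z/d\Z)$; this is also what keeps the degree bound at $d^4$ rather than a power of $d$ growing with $m$.
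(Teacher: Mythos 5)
Your proposal is essentially the paper's own argument: the paper deduces this theorem simply by applying the conditional effective Chebotarev density theorem of Lagarias--Odlyzko \cite{LO} (see also \cite[Lemma 5.3]{MMprime}) to the extension $K_{d,m}/\Q$ with the trace-zero set $C_{d,m}$, together with exactly the bounds you supply, namely $[K_{d,m}:\Q]\le |\GL_2(\Z/d\Z)|<d^4$ (since $\tilde{\rho}_{d,m}$ factors through $\GL_2(\Z/d\Z)$) and $\log D_{K_{d,m}}\ll d^4\log(dN)$ from ramification only at primes dividing $dN$. The one point to phrase carefully is that the effective Chebotarev estimate should be invoked for the conjugation-stable set $C_{d,m}$ as a whole (as in Serre's or Murty--Murty's formulation), rather than summed over its individual conjugacy classes, so that the additive $\log D_{K_{d,m}}$ term occurs only once and the error stays within $O(d^4\log(dN))$.
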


\bigskip

\subsection{Sato-Tate conjecture in short intervals}\label{ST}
Let $f$ be as before and 
$$
\lambda_f(p) ~=~ \frac{ a_f(p)}{2p^{(k-1)/2}}.
$$ 
The Sato-Tate conjecture states that the numbers $\lambda_f(p)$
are equidistributed in the interval ${[-1, 1]}$ with respect 
to the Sato-Tate measure 
$$
d\mu_{ST} = (2/\pi)\sqrt{1- t^2}~dt.
$$ 
This means that for any $-1 \leq a \leq b \leq 1$, 
the density of the set of primes~$p$ 
satisfying ${\lambda_f(p) \in [a, b]}$ is 
\begin{equation*}
\frac{2}{\pi}\int_a^b\sqrt{1-t^2}~ dt.
\end{equation*}
It is now a theorem due to the 
works of Barnet-Lamb, Clozel, Geraghty, 
Harris, Shepherd-Barron and Taylor 
(\cite[Theorem~B]{BGHT}, \cite{{CHT}, {HST}}).

We will need Sato-Tate conjecture in short intervals
due to Lemke Oliver and Thorner.  For this,
we need to introduce Chebyshev polynomials.  
The Chebyshev polynomials of second kind are defined by
\begin{equation*}
U_0(x) ~=~ 1,~~ U_1(x) ~=~ 2x 
\phantom{m}\text{and}\phantom{m} 
U_n(x) ~=~ 2x U_{n-1}(x) - U_{n-2}(x) ~ \text{ for  }n \geq 2.
\end{equation*}
The generating function of $U_n$ is given by
$$
\sum_{n=0}^{\infty} U_n(x) t^n 
~=~ \frac{1}{1-2tx+t^2}.
$$
Note that if $p \nmid N$, then $U_n(\lambda_f(p))$ 
is the  Dirichlet coefficient of $ L \( s, \text{Sym}^n \pi_f \)$ at $p$, 
where $\pi_f$ denotes the cuspidal representation 
of $\text{GL}_2\(\mathbb{A}_\Q\)$ attached  to $f$.
Let $M$ be a natural number. 
A subset $I \subseteq [-1, 1]$ is said to be $\text{Sym}^M$-minorized 
if there exist constants $b_0, b_1, \cdots , b_M \in \R$ with $b_0 > 0$ 
such that
$$
\mathds{1}_{I}(t) 
~\geq~ 
\sum_{n=0}^{M} b_n U_n(t)
~~\text{ for all~} t \in [-1,1].
$$
Here $\mathds{1}_I$ denotes the indicator function of $I$.

\begin{rmk}\label{SymMin}
Let $B_0 = \frac{1+\sqrt{7}}{6} = 0.6076 \cdots$ and 
$B_1 =\frac{-1+\sqrt{7}}{6} = 0.2742 \cdots$. 
Then the interval $[-1, b]$ can be $\text{Sym}^4$-minorized 
if $b > -B_0$ and $[a, 1]$ can be $\text{Sym}^4$-minorized 
if $a \in [B_1, B_0)$. It can be shown that the interval $I= [-1, -0.1]$ is
$\text{Sym}^4$-minorized with $b_0 > 0.08$ (see \cite[Lemma A.1]{OT}).
Further, any interval $I \subseteq [-1, 1]$ can be $\text{Sym}^M$-minorized
if $M$ is sufficiently large (see~\cite[Page 6997, Remark 1]{OT}).
\end{rmk}

In this context, Lemke Oliver and Thorner 
proved the following version of the Sato-Tate conjecture 
in short intervals (see \cite[Thorem 1.6]{OT}).
\begin{thm}\label{STshort}
Let $f$ be a non-CM normalized  Hecke eigenform 
of weight $k$ and level $N$. Also let $I \subseteq [-1, 1]$ 
be a subset which can be $\text{Sym}^M$-minorized.
Then there exists a constant $c_4 \in (0, 1)$ 
depending on $I$ and $N$ such that
if $y \geq x^{1-c_4}$, then
$$
\sum_{\substack{x < p < x+y \\ p  \nmid N}} 
\mathds{1}_I\(\lambda_f(p)\) \log p 
~\asymp~ y
$$
for all sufficiently large $x$ depending on $f$ and $M$. 
Here the implied constant depends on $I$ and $M$.
\end{thm}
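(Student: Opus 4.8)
The plan is to feed the $\text{Sym}^M$-minorization of $I$ into a prime number theorem in short intervals for each symmetric power $L$-function $L(s,\text{Sym}^n\pi_f)$, $0\le n\le M$. Let $b_0,\dots,b_M$ be the minorizing constants, so $\mathds{1}_I(t)\ge\sum_{n=0}^M b_n U_n(t)$ on $[-1,1]$ with $b_0>0$. Since $U_n(\lambda_f(p))$ is the $p$-th Dirichlet coefficient of $L(s,\text{Sym}^n\pi_f)$ for $p\nmid N$, specializing $t=\lambda_f(p)$, multiplying by $\log p$ and summing yields
\[
\sum_{\substack{x<p<x+y\\ p\nmid N}}\mathds{1}_I(\lambda_f(p))\log p\ \ge\ \sum_{n=0}^M b_n\,S_n,\qquad S_n:=\sum_{\substack{x<p<x+y\\ p\nmid N}}U_n(\lambda_f(p))\log p .
\]
The upper bound $\ll y$ is the easy half: $\mathds{1}_I\le 1$, and $\sum_{x<p<x+y}\log p\le(\log(x+y))(\pi(x+y)-\pi(x))\ll y$ by Brun--Titchmarsh. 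Everything therefore reduces to proving $S_0\gg y$ and $S_n=o(y)$ for $1\le n\le M$.

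For $n=0$, removing the $O(\log N)$ primes dividing $N$ and the $O(\sqrt{x}\log x)$ contribution of higher prime powers shows $S_0=\psi(x+y)-\psi(x)+o(y)$; since $y\ge x^{1-c_4}$ with $c_4$ small, the classical prime number theorem in short intervals (Hoheisel's range suffices) gives $S_0=y+o(y)$.

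For $1\le n\le M$ I would use that $\text{Sym}^n\pi_f$ is a cuspidal automorphic representation of $\GL_{n+1}(\mathbb{A}_\Q)$ — the symmetric power functoriality underlying the cited work of Barnet-Lamb, Clozel, Geraghty, Harris, Shepherd-Barron and Taylor, together with its extension to all powers by Newton--Thorne — so that $L(s,\text{Sym}^n\pi_f)$ is entire and nonzero on $\Re s=1$. Applying Perron's formula and the explicit formula to $\psi(u,\text{Sym}^n\pi_f)=\sum_{p^k\le u}(\text{coefficient})\log p$ gives, for a parameter $T$,
\[
S_n=-\sum_{|\gamma|\le T}\frac{(x+y)^\rho-x^\rho}{\rho}+O\!\left(\tfrac{x}{T}(\log x)^2\right)+O(\sqrt{x}),
\]
the last two errors handling truncation, ramified primes and prime powers. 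Bounding $|(x+y)^\rho-x^\rho|/|\rho|\ll\min\{y\,x^{\beta-1},\,x^\beta/|\gamma|\}$ reduces the zero sum to counting zeros of $L(s,\text{Sym}^n\pi_f)$ in rectangles. Since $f$ is fixed, the conductor of $\text{Sym}^n\pi_f$ is fixed, so there is a de la Vallée Poussin zero-free region $\beta\le 1-c_5/\log(|\gamma|+3)$ with $c_5$ absolute, and any exceptional zero $\beta_1$ has $1-\beta_1\ge\eta(f,n)>0$; combined with a log-free zero-density estimate $N(\sigma,T,\text{Sym}^n\pi_f)\ll_{f,n}T^{c_6(1-\sigma)}$ and the choice of $T$ equal to a small power of $x$, one obtains $S_n\ll_{f,n}y\,x^{-\delta_n}$ for some $\delta_n>0$, provided $y\ge x^{1-c_4^{(n)}}$ with $c_4^{(n)}$ small in terms of $c_5,c_6,\eta$.

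Taking $c_4$ to be the minimum of the admissible window exponent from the $n=0$ Hoheisel step and of $c_4^{(1)},\dots,c_4^{(M)}$, for $y\ge x^{1-c_4}$ we conclude
\[
\sum_{\substack{x<p<x+y\\ p\nmid N}}\mathds{1}_I(\lambda_f(p))\log p\ \ge\ b_0\big(y+o(y)\big)-\sum_{n=1}^M|b_n|\,O_{f,n}\!\big(y\,x^{-\delta_n}\big)\ \gg_{I,M}\ y,
\]
which together with the trivial upper bound gives $\asymp y$. I expect the main obstacle to be exactly this last point: the short-interval prime number theorem for $\text{Sym}^n\pi_f$ with error $o(y)$ rests on both the automorphy of all symmetric powers $\le M$ and a log-free zero-density estimate for these $\GL_{n+1}$ $L$-functions, and one must check that the admissible window exponent $c_4$ can be taken uniform over the finitely many $n\le M$ — which is harmless since $M$ depends only on $I$.
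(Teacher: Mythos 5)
The paper itself contains no proof of this statement: it is quoted directly from Lemke Oliver--Thorner \cite[Theorem 1.6]{OT}, so the comparison is with that source. Your overall route --- minorize $\mathds{1}_I$ by $\sum_{n\le M}b_nU_n$, reduce to short-interval prime sums $S_n$ attached to $L(s,\mathrm{Sym}^n\pi_f)$, treat $n=0$ by the classical prime number theorem in short intervals and bound $n\ge 1$ via the explicit formula, a zero-free region and a log-free zero-density estimate, with the trivial Brun--Titchmarsh upper bound for the other direction --- is exactly the mechanism of the cited proof, and those parts of your sketch are sound.

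The genuine gap is the central quantitative claim that for $1\le n\le M$ one gets $S_n\ll_{f,n} y\,x^{-\delta_n}$ (in particular $S_n=o(y)$) from a de la Vall\'ee Poussin type region $\beta\le 1-c_5/\log(|\gamma|+3)$ together with $N(\sigma,T,\mathrm{Sym}^n\pi_f)\ll T^{c_6(1-\sigma)}$. For a window of length $y=x^{1-c_4}$ you must take $T$ of size about $x^{c_4}$, and zeros with $|\gamma|$ comparable to $T$ are only known to satisfy $\beta\le 1-c_5/(c_4\log x)$; a single such zero already contributes about $y\,e^{-c_5/c_4}$ to the zero sum, and partial summation against the log-free density bound gives in total only $S_n\ll y\exp(-c\,c_5/c_4)$ for some $c>0$ --- a saving by a fixed constant, not by a power of $x$, and not $o(y)$. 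A power saving in intervals of length $x^{1-c_4}$ would require a fixed-width zero-free strip, and even $o(y)$ would need a Vinogradov--Korobov type region for $\mathrm{Sym}^n\pi_f$, which is not available. The argument is repaired by aiming lower: since $b_0>0$, choose $c_4$ so small that $\exp(-c\,c_5/c_4)\sum_{n=1}^{M}|b_n|<b_0/2$, which still yields the stated bound $\asymp y$ and is precisely why $c_4$ in the theorem is allowed to depend on $I$ (through $b_0,\dots,b_M$). With that correction your sketch matches the actual proof in \cite{OT}, whose real content is the effective log-free zero-density estimate making the constant $c$ above explicit and uniform.
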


Conditionally under GRH, i.e, assuming the generalized Riemann hypothesis 
for all symmetric power L-functions $L(s, \text{Sym}^m \pi_f)$, we have 
the following theorem due to Rouse and Thorner (see \cite{RT}, \cite{Th}).

\begin{thm}\label{EfSTGRH}
Suppose that GRH is true and $f$ is a non-CM form. 
Also let $I \subseteq [-1, 1]$ be an interval. Then we have
\begin{equation}\label{error}
\#\{p \leq x ~:~ p \nmid N,~ \lambda_f(p) \in I\} 
~=~ 	\mu_{ST}(I) \pi(x)  ~+~
O\(x^{3/4} \frac{\log \(kNx\)}{\log x}\).
\end{equation}
\end{thm}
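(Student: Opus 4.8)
The plan is to turn the equidistribution statement into an interval count via a Beurling--Selberg sandwich, using the whole family of symmetric power $L$-functions $L(s,\mathrm{Sym}^n\pi_f)$, $n=0,1,2,\dots$, as a spectral basis. The key observation is that for $p\nmid N$ the value $U_n(\lambda_f(p))$ is exactly the $p$-th Dirichlet coefficient of $L(s,\mathrm{Sym}^n\pi_f)$, so control of the zeros of these $L$-functions translates into control of the exponential-sum--type quantities $\mathcal E_n(x):=\sum_{p\le x,\,p\nmid N}U_n(\lambda_f(p))$, with $\mathcal E_0(x)=\pi(x)+O(\log N)$.

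First I would record the needed GRH-conditional estimate for $\mathcal E_n(x)$, uniform in $n$. By symmetric power functoriality (Newton--Thorne, with the low-degree cases going back to Gelbart--Jacquet, Kim--Shahidi and Kim), for every $n\ge1$ the function $L(s,\mathrm{Sym}^n\pi_f)$ is the $L$-function of a cuspidal automorphic representation of $\mathrm{GL}_{n+1}(\mathbb{A}_\Q)$, hence entire, with analytic conductor at height $t$ whose logarithm is $\ll n\log(kNnt)$ (its $n+1$ gamma factors have parameters $\ll kn$, and the finite conductor contributes $\ll n\log N$). Feeding this, together with GRH for $L(s,\mathrm{Sym}^n\pi_f)$ and the trivial bound $|U_n|\le n+1$ for the ramified and higher prime-power terms, into the truncated explicit formula gives
$$
\sum_{\substack{p\le x\\ p\nmid N}}U_n(\lambda_f(p))\log p~=~[n=0]\,x~+~O\!\left(x^{1/2}\,n\,(\log x)\log(kNnx)\right),
$$
and partial summation to remove the factor $\log p$ yields $\mathcal E_n(x)\ll n\,x^{1/2}\log(kNnx)$, uniformly for $1\le n\le x^{1/4}$. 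The point to stress is that the degree $n+1$ and the conductor $(kNn)^{O(n)}$ of $\mathrm{Sym}^n\pi_f$ cost only a single power of $n$ here.

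Next, for $I=[\alpha,\beta]\subseteq[-1,1]$ and a parameter $M$ I would invoke the Beurling--Selberg/Vaaler construction, transported to the Sato--Tate measure via $t=\cos\theta$ (so that $d\mu_{ST}=\tfrac{2}{\pi}\sin^2\theta\,d\theta$ and $U_n(\cos\theta)=\sin((n+1)\theta)/\sin\theta$): there exist polynomials $S_M^{\pm}(t)=\sum_{n=0}^{M}c_n^{\pm}U_n(t)$ with $S_M^-\le\mathds{1}_I\le S_M^+$ on $[-1,1]$, with $|c_n^{\pm}|\ll 1/n$ for $1\le n\le M$, and with $\int_{-1}^{1}(S_M^+-S_M^-)\,d\mu_{ST}\ll 1/M$. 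Since $\int_{-1}^{1}U_n\,d\mu_{ST}=[n=0]$, the last two facts force $c_0^{\pm}=\mu_{ST}(I)+O(1/M)$. Sandwiching the count and expanding in the $U_n$-basis,
$$
\sum_{\substack{p\le x\\ p\nmid N}}S_M^{\pm}(\lambda_f(p))~=~\mu_{ST}(I)\,\pi(x)~+~O\!\left(\frac{x}{M\log x}\right)~+~O\!\left(\sum_{n=1}^{M}|c_n^{\pm}|\,|\mathcal E_n(x)|\right)~+~O(\log N),
$$
with the penultimate error $\ll\sum_{n\le M}\tfrac1n\cdot n\,x^{1/2}\log(kNnx)\ll M\,x^{1/2}\log(kNMx)$. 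Choosing $M\asymp x^{1/4}/\log x$ balances the two main errors, both of which become $\ll x^{3/4}\log(kNx)/\log x$; since $\#\{p\le x:p\nmid N,\ \lambda_f(p)\in I\}$ lies between $\sum_p S_M^-(\lambda_f(p))$ and $\sum_p S_M^+(\lambda_f(p))$, the theorem follows.

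I expect the crux to be the uniformity in $n$ of the GRH-conditional prime count for $\mathrm{Sym}^n\pi_f$: one must keep the dependence on $n$ linear (up to logarithms), since it is exactly this linear growth, played off against the $1/M\asymp 1/n$ size of the Beurling--Selberg coefficients, that pins the exponent at $3/4$ (a quadratic dependence would give only $x^{5/6}$). A secondary, purely technical, point is to verify that the classical interval majorant/minorant really does have Chebyshev-of-the-second-kind coefficients of size $\ll 1/n$ after the substitution $t=\cos\theta$; this follows by taking an even trigonometric-polynomial majorant (whose cosine coefficients are already $\ll 1/n$) and rewriting it in the $U_n$-basis via $T_n=\tfrac12(U_n-U_{n-2})$, using that $\sin^2\theta\le 1$ to pass from Lebesgue measure to $\mu_{ST}$ in the $\ll 1/M$ gap estimate.
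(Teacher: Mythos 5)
Your proposal is correct and follows essentially the same route as the source the paper relies on: the paper does not reprove this theorem but quotes it from Rouse--Thorner \cite{RT} (see also \cite{Th}), whose argument is exactly your Beurling--Selberg/Vaaler sandwich in the $\theta$-variable combined with GRH bounds of the shape $\sum_{p\le x}U_n(\lambda_f(p))\log p\ll n\,x^{1/2}\log x\log(kNnx)$ for the symmetric power $L$-functions, optimized at $M\asymp x^{1/4}$. The only difference is cosmetic: Rouse--Thorner originally assumed automorphy of $\mathrm{Sym}^n\pi_f$ as a hypothesis, which you replace by the now-available Newton--Thorne theorem, consistent with how the present paper uses the result.
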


\begin{rmk}
As remarked by Thorner in \cite{Th}, it is expected that the error term in \eqref{error} 
can be replaced by $O(x^{1/2+\e})$ for any $\e > 0$, 
where the implied constant will depend on $\e$ and $f$.
\end{rmk}

Let $\e>0$ be a real number. From \thmref{EfSTGRH}, 
it follows that if $y \geq x^{3/4} \log x \log\log x$, then
$$
\sum_{\substack{x < p \leq x+y \\ p \nmid N}} 
\mathds{1}_I \( \lambda_f(p)\) \log p 
~\ge~	 
\(	\mu_{ST}(I) -\e \)  y
$$
for all sufficiently large $x$ depending on $\e, I$ and $f$.
In section 4, we will prove the following theorem 
conditionally under the generalized Riemann hypothesis
for all symmetric power $L$-functions of $f$.
\begin{thm}\label{STshomin}
Suppose that GRH is true, $f$ is a non-CM form and $\e > 0$ is a real number. 
Let $I \subseteq [-1, 1]$ be a subset which can be $\text{Sym}^M$-minorized
and $b_0$ be as before. Then for $y \geq x^{1/2} (\log x)^3$, we have
$$
\sum_{\substack{x < p \leq x+y \\ p \nmid N}} 
\mathds{1}_I \( \lambda_f(p)\) \log p 
~\geq~ 
\(b_0 -\e \) y  
$$
for all sufficiently large $x$ depending on $\e, I, M$ and $f$.
\end{thm}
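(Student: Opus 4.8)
The plan is to combine the $\text{Sym}^M$-minorization of $I$ with the prime number theorem under GRH for the symmetric power $L$-functions $L(s,\text{Sym}^n\pi_f)$, $0\le n\le M$. Let $b_0,\dots,b_M$ be the minorizing constants of $I$, so that $\mathds{1}_I(t)\ge\sum_{n=0}^{M}b_nU_n(t)$ for all $t\in[-1,1]$. Evaluating this inequality at $t=\lambda_f(p)$, multiplying by $\log p\ge 0$, and summing over $x<p\le x+y$ with $p\nmid N$ gives
$$
\sum_{\substack{x<p\le x+y\\ p\nmid N}}\mathds{1}_I(\lambda_f(p))\log p~\ge~\sum_{n=0}^{M}b_n\sum_{\substack{x<p\le x+y\\ p\nmid N}}U_n(\lambda_f(p))\log p ,
$$
so it suffices to control $S_n:=\sum_{x<p\le x+y,\,p\nmid N}U_n(\lambda_f(p))\log p$ for each fixed $0\le n\le M$.

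For this I would use that, for $p\nmid N$, $U_n(\lambda_f(p))$ is the $p$-th Dirichlet coefficient of $L(s,\text{Sym}^n\pi_f)$. Letting $\psi_n(t)=\sum_{m\le t}\Lambda_{\text{Sym}^n}(m)$ be the associated summatory von Mangoldt function, one has $S_n=\psi_n(x+y)-\psi_n(x)-E_n$, where $E_n$ gathers the contribution of proper prime powers $p^k$ ($k\ge 2$) and of the finitely many ramified primes; since the relevant local coefficients are $O_M(1)$ and there are $\ll\sqrt{x+y}$ proper prime powers in $(x,x+y]$, this yields $E_n\ll_{M,f}\sqrt{x+y}\,\log(x+y)$. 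Now GRH enters. For $n=0$ this is just RH for $\zeta(s)$, and the classical explicit formula gives $\psi_0(x+y)-\psi_0(x)=y+O\!\big(\sqrt{x+y}\,(\log(x+y))^2\big)$. For $1\le n\le M$ the function $L(s,\text{Sym}^n\pi_f)$ is entire (as $f$ is non-CM) with analytic conductor $\ll_{M,N}1$, so truncating the sum over its zeros at height $T=\sqrt{x+y}$ in the explicit formula and invoking GRH gives $\psi_n(x+y)-\psi_n(x)\ll_{M,f}\sqrt{x+y}\,(\log(x+y))^2$. Hence
$$
\sum_{\substack{x<p\le x+y\\ p\nmid N}}\mathds{1}_I(\lambda_f(p))\log p~\ge~b_0\,y-C\sqrt{x+y}\,(\log(x+y))^2
$$
for some $C=C(M,I,f)>0$.

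It remains to absorb the error term, and this is exactly where the hypothesis $y\ge x^{1/2}(\log x)^3$ is used. If $x^{1/2}(\log x)^3\le y\le x$ then $\sqrt{x+y}(\log(x+y))^2\ll\sqrt{x}(\log x)^2\le y/\log x$, while if $y>x$ then $\sqrt{x+y}(\log(x+y))^2\ll\sqrt{y}(\log y)^2=o(y)$ as $x\to\infty$; in either case, since $b_0>0$ is a fixed constant, the error is at most $\e y$ once $x$ exceeds a threshold depending on $\e,M,I$ and $f$. This yields $\sum_{x<p\le x+y,\,p\nmid N}\mathds{1}_I(\lambda_f(p))\log p\ge(b_0-\e)y$, which is the assertion of \thmref{STshomin}.

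I do not expect a deep obstacle: the argument is the short-interval counterpart of the Rouse-Thorner prime number theorem for $\text{Sym}^n\pi_f$, using the minorization already recorded in \rmkref{SymMin}. Unlike the proof of \thmref{EfSTGRH}, where one expands $\mathds{1}_I$ into symmetric powers of degree growing with $x$, here $M$ is fixed, so only finitely many symmetric power $L$-functions intervene and each contributes the clean square-root error $\sqrt{x}(\log x)^2$; this is what permits the exponent $1/2$ rather than $3/4$. The two points that do require care are uniformity and bookkeeping of logarithms: one must ensure the implied constants in the explicit formula depend only on $M$ and $f$ (which reduces to the conductor of $\text{Sym}^n\pi_f$ being bounded in terms of $n$ and $N$, so that the finitely many terms $n=0,1,\dots,M$ can be summed), and one must check that the $(\log x)^2$ in the GRH error is comfortably dominated by the $(\log x)^3$ built into the hypothesis on $y$; this is precisely why the exponent $3$ appears in the statement.
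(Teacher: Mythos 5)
Your proof is correct and follows essentially the same route as the paper: minorize $\mathds{1}_I$ by $\sum_{n=0}^{M} b_n U_n$, get the main term $b_0 y$ from the $n=0$ term via RH for $\zeta(s)$, bound each sum $\sum_{x<p\le x+y,\,p\nmid N} U_n(\lambda_f(p))\log p$ for $1\le n\le M$ by $O_{M,f}\(x^{1/2}(\log x)^2\)$ under GRH, and absorb the error using $y\ge x^{1/2}(\log x)^3$. The only cosmetic difference is that the paper cites this square-root bound directly from Rouse--Thorner \cite{RT}, whereas you re-derive it from the explicit formula for $L(s,\mathrm{Sym}^n\pi_f)$ with fixed conductor; the two are the same estimate.
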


\section{Chebotarev density theorem in short intervals}\label{proofsec}

\smallskip

\subsection{Proof of \thmref{Cheb-short}}
Let the notations be as in section 2 and define
$$
\Psi_C(x, L/K) 
~=~ \sum_{\substack{N_K(\fp)^m \leq x 
\\ \fp~ \text{ unramified } \\ [\sigma_{\fp}]^m = C}} 
\log N_K(\fp).
$$
Let $g$ be a non-identity element of $G$,
$H = <g>$ and $E = L^H$. 
Also let $x \geq 2$, $T \geq 2$ and $1 \leq y \leq x$. 
Then from \cite[Theorem 7.1]{LO}, we get
\begin{equation}\label{PsiSh1}
\Psi_C(x+y, L/K) - \Psi_C(x, L/K) 
~=~
\frac{\#C}{\#G} 
\( y - \sum_{\chi} \overline{\chi}(g) 
\sum_{\substack{\rho \\ |\gamma| < T}} 
\frac{(x+y)^{\rho} - x^\rho}{\rho} \) 
~+~  \cE_1 ~+~ \cE_2,
\end{equation}
where $\chi$ runs over irreducible characters of $H$ and 
$\rho$ runs over non-trivial zeros of $L(s, \chi, L/E)$. 
Further, we have
\begin{equation}\label{E1}
\cE_1 ~\ll~  
\frac{\#C}{\#G} \( \frac{x \log x + T}{T} \log D_L 
~+~ n_L \log x 
~+~ \frac{n_L x \log x \log T}{T}\)
\end{equation}
and 
\begin{equation}\label{E2}
\cE_2 ~\ll~ \log x \log D_L ~+~ n_K\frac{x \log^2 x}{T} .
\end{equation}
Let us set $\cL = \log\(Q T^{n_E}\)$, 
where $Q = D_E \cQ(L/E) n_E^{n_E}$ (see section 2.1).

We estimate the above double sum over $ \chi$ and $\rho$ as follows:
\begin{equation*}
\begin{split}
\Bigg|
\sum_{\chi} \overline{\chi}(g) \sum_{\substack{\rho \neq \beta_1 \\ |\gamma| < T}} 
\frac{(x+y)^\rho - x^\rho}{\rho}
\Bigg|
&~\leq~
\sum_{\substack{\chi, ~~\rho \neq \beta_1  \\ |\gamma| < T
	\\ 0 < \beta < 1- \tilde{c}_1/\cL }}  yx^{\beta-1}\\
&~\leq~ 
3 \sum_{\substack{\chi, ~\rho \neq \beta_1  \\ |\gamma| < T
	 \\ 1/2 \leq \beta < 1- \tilde{c}_1/\cL }}  yx^{\beta-1} 
~\leq~
 -3y \int_{1/2}^{1-\tilde{c}_1/\cL} x^{\sigma -1 } dN^*(\sigma, T),
 \end{split}
\end{equation*}
where 
$$
N^*(\sigma, T) ~=~ \sum_{\chi}\sum_{\substack{\rho \neq \beta_1
\\ \sigma < \beta < 1 \\ |\gamma| < T}} 1
$$
and $\tilde{c}_1$ is a positive constant (see \thmref{zerofreez}).
Let $c_2$ be a positive constant which is sufficiently large and  
$x \geq 2 Q^{4c_2}$. Also choose $T = Q^{-\frac{1}{n_E}} x^{\frac{1}{4c_2 n_E}}$.
Applying \thmref{ZerDen}, we obtain
\begin{equation}\label{PsiSh2}
\begin{split}
-\int_{1/2}^{1-\tilde{c}_1/\cL} x^{\sigma -1 } dN^*(\sigma, T) 
&~=~
x^{-1/2} N^*\(1/2, T\) 
~+~ \log x \int_{1/2}^{1-\tilde{c}_1/\cL}  
x^{\sigma -1 }  N^*(\sigma, T) ~ d\sigma \\
&~\ll~
 x^{-3/8} ~+~ e^{-3\tilde{c}_1 c_2}.
 \end{split}
\end{equation}
 We note that $D_L \geq D_E \cQ$ (see \cite[Lemma 4.2]{BS}) 
 and hence $Q = Q(L/E) \leq D_L n_E^{n_E}\leq D_L n_L^{n_L}$.
 Now we suppose that $x \geq \(D_L n_L^{n_L}\)^B$, 
 where $B= B(c_2)$ is a sufficiently large positive constant which depends on $c_2$. 
 Then we can check that
\begin{equation}\label{PsiSh3}
\cE_1 ~\ll~
 \frac{\#C}{\#G} \cdot x^{1- \frac{1}{8c_2 n_E}} 
\phantom{mm}\text{and}\phantom{mm}
\cE_2 ~\ll~
 \frac{\#C}{\#G} \cdot x^{1- \frac{1}{8c_2 n_E}} .
\end{equation}
We suppose that $y \geq x^{1- \frac{1}{16 c_2 n_E}}$. 
Now \thmref{Cheb-short} follows from \eqref{PsiSh1}, \eqref{PsiSh2} and \eqref{PsiSh3}. \qed

\section{Sato-Tate conjecture in short intervals}

\smallskip

\subsection{Proof of \thmref{STshomin}}
Suppose that GRH is true.
Let $M \geq 1$ be an integer and  
$I \subseteq [-1, 1]$ be a subset which can be $\text{Sym}^M$-minorized. 
Then there exist $b_0, ~b_1, \cdots , b_M \in \R$ with $b_0 > 0$ such that
$$
\mathds{1}_{I}(t) 
~\geq~ 
\sum_{n=0}^{M} b_n U_n(t)
~~\text{ for all~} t \in [-1,1].
$$
Hence we get
\begin{equation}\label{satint1}
\sum_{\substack{x < p \leq x+y \\ p \nmid N}}
\mathds{1}_{I}(\lambda_f(p)) \log p
~\geq~ 
\sum_{n=0}^{M} b_n \sum_{\substack{x < p \leq x+y \\ p \nmid N}} 
U_n(\lambda_f(p)) \log p.
\end{equation}
From \cite[Page 3596]{RT}, we have
$$
\bigg|\sum_{\substack{x < p \leq x+y \\ p \nmid N}} 
U_n(\lambda_f(p)) \log p \bigg|
 ~\ll~ x^{1/2} (\log x)^2
$$
for any $n \geq 1$. Here the implied constant depends on $M$ and $f$.
Note that the proof in \cite{RT} is given for non-CM newforms of square-free level but it
goes through also for non-CM forms of arbitrary level.
If $n=0$, we have (see \cite[page 113]{Da})
$$
\sum_{\substack{x < p \leq x+y \\ p \nmid N}} \log p 
~=~ y + O\(x^{1/2} (\log x)^2\).
$$
Hence from \eqref{satint1}, we get
\begin{equation*}
\sum_{\substack{x < p \leq x+y \\ p \nmid N}}
\mathds{1}_{I}(\lambda_f(p)) \log p
~\geq~ b_0 y + O\(x^{1/2} (\log x)^2\),
\end{equation*}
where the implied constant depends on $M$, 
$\max_{0 \leq i \leq M} |b_i|$ and $f$. 
This completes the proof of \thmref{STshomin}. \qed

\section{Large prime factors of Fourier coefficients in short intervals}

\smallskip
In this section, we detail the proofs of \thmref{thm1}, \thmref{thm2}, 
\thmref{thmn} and \rmkref{density}. We need the following 
lemmas to prove them.

\begin{lem}\label{red-p}
Let $n \ge 1$ be a natural number and $p \nmid N$
be a prime number. Then for $d | (n+1)$, we have
$$
P\(a_f(p^n)\) ~\geq~ P\(a_f(p^{d-1})\)
$$
provided $a_f(p^{n}) \ne 0$.
\end{lem}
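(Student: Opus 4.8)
The plan is to reduce the statement to the standard factorization of $a_f(p^n)$ coming from the Hecke recursion at $p$. Write the Hecke polynomial at $p$ as $X^2 - a_f(p)X + p^{k-1} = (X-\alpha_p)(X-\beta_p)$, so that $\alpha_p,\beta_p$ are algebraic integers with $\alpha_p+\beta_p = a_f(p)$ and $\alpha_p\beta_p = p^{k-1}$ (these are precisely the eigenvalues of $\rho_d(\sigma_p)$ from Section~\ref{Smod}). Comparing Dirichlet coefficients in the local Euler factor, or using that $a_f(p^m)$ is the trace of $\mathrm{Sym}^m$ of $\rho_d(\sigma_p)$, gives the classical identity $a_f(p^m) = \sum_{j=0}^{m}\alpha_p^{\,j}\beta_p^{\,m-j}$ for every $m\ge 0$.

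Next I would invoke the purely formal polynomial identity in $\Z[X,Y]$: writing $n+1 = de$ with $d\mid(n+1)$,
$$
\sum_{j=0}^{n} X^j Y^{n-j} ~=~ \left(\sum_{j=0}^{d-1} X^j Y^{d-1-j}\right)\left(\sum_{i=0}^{e-1} X^{di} Y^{d(e-1-i)}\right),
$$
since both sides equal the polynomial $(X^{de}-Y^{de})/(X-Y)$. Substituting $X=\alpha_p$, $Y=\beta_p$ and using the formula for $a_f(p^m)$ yields the factorization $a_f(p^n) = a_f(p^{d-1})\cdot S_p$, where $S_p := \sum_{i=0}^{e-1}\alpha_p^{di}\beta_p^{d(e-1-i)}$.

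The one point that genuinely needs an argument — and the step I expect to be the main (indeed the only) obstacle — is that the cofactor $S_p$ is a rational integer. It is a polynomial in $\alpha_p,\beta_p$ with coefficients in $\{0,1\}$, hence an algebraic integer; and it is invariant under the transposition $\alpha_p\leftrightarrow\beta_p$ (which permutes the summands via $i\mapsto e-1-i$), hence fixed by $\mathrm{Gal}(\overline{\Q}/\Q)$ and therefore rational. A rational algebraic integer lies in $\Z$, so $S_p\in\Z$. (Equivalently one may quote the fundamental theorem of symmetric polynomials to express $S_p$ as an integer polynomial in $a_f(p)$ and $p^{k-1}$.)

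Finally I would conclude as follows. Since $a_f(p^n)\neq 0$ by hypothesis and $a_f(p^n) = a_f(p^{d-1})S_p$ with $S_p\in\Z$, we have $a_f(p^{d-1})\neq 0$ and $a_f(p^{d-1})\mid a_f(p^n)$ in $\Z$. If $|a_f(p^{d-1})|\le 1$ then $P(a_f(p^{d-1})) = 1 \le P(a_f(p^n))$ by the stated convention $P(0)=P(\pm1)=1$; otherwise every prime divisor of $a_f(p^{d-1})$ is a prime divisor of $a_f(p^n)$, whence $P(a_f(p^{d-1}))\le P(a_f(p^n))$. This proves the lemma; one checks easily that the degenerate cases $\alpha_p=\beta_p$, $d=1$ and $e=1$ are covered, because the key identity is applied at the level of polynomials in $\Z[X,Y]$ before any specialization.
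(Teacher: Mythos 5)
Your proof is correct and follows essentially the same route as the paper: both express $a_f(p^{m})$ as the Lucas-type quantity $(\alpha_p^{m+1}-\beta_p^{m+1})/(\alpha_p-\beta_p)$ and deduce the divisibility $a_f(p^{d-1})\mid a_f(p^{n})$ for $d\mid(n+1)$, whence the inequality on largest prime factors. The only difference is that where the paper cites Carmichael and Stewart for this Lucas-sequence divisibility, you prove it directly via the factorization of $(X^{de}-Y^{de})/(X-Y)$ in $\Z[X,Y]$ together with the symmetry/integrality argument for the cofactor, which also makes explicit that $a_f(p^{n})\neq 0$ forces $a_f(p^{d-1})\neq 0$.
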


\begin{proof}
For any prime $p \nmid N$ and integer $n \geq 1$, we have
$$
a_f(p^{n+1}) ~=~ a_f(p) a_f(p^n) - p^{k-1} a_f(p^{n-1}).
$$
Hence for natural numbers $n\ge 2$, we get
\begin{equation}\label{Lucaf}
a_f(p^{n-1}) ~=~ \frac{\alpha_p^n - \beta_p^n}{\alpha_p - \beta_p},
\end{equation}
where $\alpha_p, \beta_p$  are the roots 
of the polynomial $x^2-a_f(p)x + p^{k-1}$. 
Since $a_f(p)$'s are assumed to be integers, 
it follows from \eqref{Lucaf} that
$$
a_f(p^{d-1}) \mid a_f(p^n) \phantom{m}\text{whenever  } d \mid n+1
$$
provided $a_f(p^{d-1}) \neq 0$ 
(see \cite[page 37, Theorem IV]{RC} and \cite[page 434, Eq. 14]{St}). 
Hence if $a_f(p^{n}) \ne 0$, we obtain
$$
P\(a_f(p^n)\) ~\geq~ P\(a_f(p^{d-1})\)
$$
whenever $d \mid (n+1)$.
\end{proof}

\begin{lem}\label{red1}
Let $h(x)$ be a real valued non-negative function of $x$.
 Also let $q \ge 2$ be a prime number,
$V_q(x)~=~ \left\{ p \in (x,  ~x + h(x)] ~:~ p \nmid N,~ a_f(p^{q-1}) \neq 0 \right\}$
and
\begin{equation*}\label{pro-1}
\prod_{p \in V_q(x)} |a_f(p^{q-1})| 
~=~ \prod_{\ell ~\text{prime}} {\ell}^{\nu_{x, \ell}}.
\end{equation*}
Then we have
\begin{equation*}
\nu_{x, \ell} 
~\leq~
\sum_{1 \leq m \leq \frac{\log(q x^{qk})}{\log \ell}} 
\Big(\pi_{f,~q-1}(x + h(x), \ell^m) ~-~ \pi_{f,~q-1}(x, \ell^m) \Big).
\end{equation*}
\end{lem}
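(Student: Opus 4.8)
The plan is to count, for each prime $\ell$, how many times $\ell$ divides the product $\prod_{p \in V_q(x)} |a_f(p^{q-1})|$ by summing the $\ell$-adic valuations of the individual factors. First I would write
$$
\nu_{x,\ell} ~=~ \sum_{p \in V_q(x)} v_\ell\bigl(a_f(p^{q-1})\bigr),
$$
where $v_\ell$ denotes the $\ell$-adic valuation, this being valid precisely because each $a_f(p^{q-1}) \neq 0$ for $p \in V_q(x)$, so the valuations are finite. Then I would use the elementary identity $v_\ell(n) = \sum_{m \geq 1} \mathds{1}_{\ell^m \mid n}$ for any nonzero integer $n$, and interchange the two sums to obtain
$$
\nu_{x,\ell} ~=~ \sum_{m \geq 1} \#\{ p \in V_q(x) : \ell^m \mid a_f(p^{q-1}) \}.
$$

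Next I would bound the inner count. For $p \nmid N$, the condition $\ell^m \mid a_f(p^{q-1})$ is exactly $a_f(p^{q-1}) \equiv 0 \pmod{\ell^m}$, so the number of such $p$ in the interval $(x, x+h(x)]$ is at most $\pi_{f,\,q-1}(x+h(x), \ell^m) - \pi_{f,\,q-1}(x, \ell^m)$, using the notation from Section~\ref{Smod} (the primes dividing $N$ only make the right side larger, so no harm). This already gives the displayed inequality except that the sum over $m$ must be truncated.

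The truncation is the one genuinely quantitative point, though it is routine: if $\ell^m \mid a_f(p^{q-1})$ and $a_f(p^{q-1}) \neq 0$, then $\ell^m \leq |a_f(p^{q-1})|$, so it suffices to show $|a_f(p^{q-1})| \leq q\,x^{qk}$ for $p \leq x + h(x)$ (say $x$ large, $h(x) \leq x$). From the Lucas-type formula \eqref{Lucaf}, $a_f(p^{q-1}) = (\alpha_p^q - \beta_p^q)/(\alpha_p - \beta_p)$ with $|\alpha_p| = |\beta_p| = p^{(k-1)/2}$ by the Ramanujan bound; hence $|a_f(p^{q-1})| \leq q\, p^{(k-1)(q-1)/2} \leq q\, p^{qk}$, and $p^{qk} \le (2x)^{qk}$, which is comfortably below the stated bound after harmless constant adjustments — or one can simply replace the upper limit by the slightly cruder $\frac{\log(qx^{qk})}{\log \ell}$ as written, which dominates $\frac{\log|a_f(p^{q-1})|}{\log \ell}$. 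So if $m > \frac{\log(qx^{qk})}{\log\ell}$ the inner count is zero, and the sum over $m$ may be restricted to $1 \leq m \leq \frac{\log(qx^{qk})}{\log\ell}$, completing the proof.

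I do not anticipate a real obstacle here; the only thing to be careful about is that $\alpha_p \neq \beta_p$ (needed for \eqref{Lucaf} to make sense) — but on the set $V_q(x)$ we have $a_f(p^{q-1}) \neq 0$, which forces $\alpha_p \neq \beta_p$ anyway — and that the implied comparison $\log|a_f(p^{q-1})| \le \log(qx^{qk})$ holds uniformly for all $p$ in the interval, which the Deligne bound guarantees.
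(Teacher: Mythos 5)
Your proof is correct and essentially identical to the paper's: write $\nu_{x,\ell}$ as a sum of $\ell$-adic valuations over $V_q(x)$, expand each valuation as a count over $m$ with $\ell^m \mid a_f(p^{q-1})$, bound the inner count by $\pi_{f,\,q-1}(x+h(x),\ell^m)-\pi_{f,\,q-1}(x,\ell^m)$, and truncate the $m$-sum via Deligne's bound $|a_f(p^{q-1})|\le q\,p^{(q-1)(k-1)/2}$, which is exactly what the paper does. One cosmetic quibble: your aside that $a_f(p^{q-1})\neq 0$ forces $\alpha_p\neq\beta_p$ is not correct (if $\alpha_p=\beta_p$ one has $a_f(p^{q-1})=q\,\alpha_p^{q-1}\neq 0$), but this is immaterial, since the size bound you need is Deligne's inequality directly and does not require the Lucas-type formula.
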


\begin{proof}
Note that, using Deligne's bound, we have
\begin{eqnarray*}
\nu_{x,\ell} 
~=~ 
\sum_{\substack{p \in V_q(x) }} \nu_\ell(a_f(p^{q-1}))\
&=&
\sum_{\substack{p \in V_q (x)}} 
\sum_{\substack{m \geq 1 \\ \ell^m | a_f(p^{q-1})}} 1 \\
&=&
\sum_{1 \leq m \leq \frac{\log(q x^{(q-1)(k-1)/2})}{\log \ell}} 
\sum_{\substack{p \in V_q(x) \\ a_f(p^{q-1}) \equiv 0 (\text{ mod } \ell^m)}} 1  \nonumber \\
&\le& 
\sum_{1 \leq m \leq \frac{\log(q x^{qk/2})}{\log \ell}}  
\bigg( \pi_{f, ~q -1}( x + h(x), ~\ell^m) - \pi_{f, ~q-1}(x,~ \ell^m)\bigg).
\end{eqnarray*}
\end{proof}

\subsection{Proof of \thmref{thm1}}
Let $f$ be as in \thmref{thm1} and $\e > 0$ be a real number. 

Applying \lemref{red-p}, we see that to prove \thmref{thm1}, 
it is sufficient to consider $n=q-1$, where $q$ is a prime number. 
The case $q=2$ corresponds to $n=1$ whereas when
$n>1$, we can assume that $q$ is an odd prime.

For any real number $A > 0$, set $\eta_1(x) = \frac{x}{(\log x)^A}$.
Let $V_q(x)$ be as in \lemref{red1} for $h(x) = \eta_1(x)$ and
\begin{equation}\label{prodafp}
\prod_{p \in V_q(x)} |a_f(p^{q-1})| 
~=~ \prod_{\ell ~\text{prime}} {\ell}^{\nu_{x, \ell}}.
\end{equation}
Then by \lemref{red1}, we have
\begin{equation}\label{vxq}
\nu_{x,\ell} 
~\le~
\sum_{1 \leq m \leq \frac{\log(q x^{ qk} )}{\log \ell}} 
\Big(\pi_{f,~q-1}(x + \eta_1(x), \ell^m) 
~-~ \pi_{f,~q-1}(x, \ell^m) \Big).
\end{equation}
From \thmref{pifshort}, there exists a constant $c > 0$ 
depending on $f$  and $A$ such that whenever
$1< \ell^m \leq c \frac{(\log x)^{1/4}}{(\log\log x)^{1/4}}$, 
we have
\begin{equation}\label{unpixd}
\pi_{f,~q-1}(x + \eta_1(x), \ell^m) ~-~ \pi_{f,~q-1}(x, \ell^m) 
~\ll~ \delta_{q-1}(\ell^m)~ \pi(\eta_1(x)) .
\end{equation}
Suppose that 
\begin{equation}\label{p-b}
P\(a_f(p^{q-1})\) ~\leq ~ (\log x^{q})^{1/8} (\log\log x^{q})^{3/8 -\e}
\end{equation}
for all $p \in V_q(x)$. Set
\begin{equation}\label{choice1}
z ~=~ c \frac{(\log x)^{1/4}}{(\log\log x)^{1/4}}
\phantom{mm}\text{and}
\phantom{mm}
w ~=~  (\log x^q)^{1/8} (\log\log x^q)^{3/8-\e}.
\end{equation}
From now on, assume that $\ell \le w$ and $x$ be sufficiently large.
For any prime $\ell  \leq w$,
set
$$
m_0 ~=~ m_0(x, \ell) ~=~  \Big[\frac{\log z}{\log \ell}\Big].
$$ 

\subsection*{When $n=1$ or equivalently $q=2$.}
Using \eqref{unpixd} and \lemref{deltal}, we obtain
\begin{eqnarray}\label{pifm01}
\sum_{1 \leq m \leq m_0} 
\( \pi_{f, ~1}(x + \eta_1(x), ~\ell^m) - \pi_{f, ~1}(x, ~\ell^m)\)
~\ll~ 
\sum_{1 \leq m \leq m_0} \delta(\ell^m) \pi(\eta_1(x))  
&\ll &	
\sum_{1\leq m \leq m_0} \frac{\pi(\eta_1(x))}{\ell^m}  \nonumber \\
&\ll& 
\frac{\pi(\eta_1(x))}{\ell} 
\end{eqnarray}
 and 
\begin{eqnarray}\label{pimm01}
\sum_{m_0 < m \leq \frac{\log(2 x^{2k} )}{\log \ell}} 
\( \pi_{f, ~1}(x+ \eta_1(x), ~\ell^m) - \pi_{f, ~1}(x,~ \ell^m)\)  
&\leq&
\delta(\ell^{m_0}) \pi(\eta_1(x))
\sum_{m \leq \frac{\log(2 x^{ 2k} )}{\log \ell}} 1 
\phantom{mmm} \nonumber\\
&\ll&
\frac{\pi(\eta_1(x))\log x}{\ell^{m_0} \log \ell}	
~\ll~ 
\frac{\eta_1(x)}{z} \cdot~\frac{\ell}{\log \ell}.
\end{eqnarray}

From \eqref{pifm01} and \eqref{pimm01}, we deduce that
\begin{equation}\label{pifs1}
\nu_{x,\ell} ~\ll~ 	\frac{\eta_1(x)}{z} \cdot~\frac{\ell}{\log \ell}.
\end{equation}
It follows from \eqref{prodafp}, \eqref{p-b}, \eqref{choice1} and \eqref{pifs1} that 

\begin{equation}\label{uppsum1}
\begin{split}
\sum_{p \in V_2(x)} \log|a_f(p)|
~=~
\sum_{\ell \leq w} \nu_{x, \ell} \log \ell 
~\ll~  
\frac{\eta_1(x)}{z} \cdot \sum_{\ell \leq w} \ell 
~\ll~ 
\frac{\eta_1(x)}{z} \cdot \frac{w^2}{\log w} 
~\ll~ 
\frac{\eta_1(x)}{(\log\log x)^{\e}}
\end{split}
\end{equation}
for all sufficiently large $x$.
Applying \thmref{STshort} with $I = [-1, -1/2]$ and \red{$M=4$}, we get
$$
\sum_{\substack{p \in (x, ~x + \eta_1(x)] \\ \lambda_f(p) \in I}} \log p 
	~\gg~ \eta_1(x)
$$
for all sufficiently large $x$. Hence we deduce  that
\begin{equation}\label{eqRl1}
\sum_{p \in V_2(x)} \log|a_f(p)| 
~\geq~ 
\sum_{\substack{p \in (x,  x + \eta_1(x)] \\ \lambda_f(p) \in I}} \log|a_f(p)| 
~\gg~ 
\sum_{\substack{p \in (x,  x + \eta_1(x)] \\ \lambda_f(p) \in I}} \log p 
~\gg~ \eta_1(x)
\end{equation}
for all sufficiently large $x$. This is a contradiction 
to \eqref{uppsum1} when $x$ is sufficiently large
and completes the proof when $n=1$.

\subsection*{When $n >1$ or equivalently $q$ is an odd prime.}
Let $\ell \le w$ be a prime such that $\ell \equiv 0, \pm 1 ~(\text{mod } q)$. 
Then for such an $\ell$, it follows from \lemref{deq-1l}, \lemref{dq-1lm} 
and \eqref{unpixd} that 
\begin{equation}\label{vxq1}
\sum_{1 \leq m \leq m_0}  
\Big(\pi_{f,~q-1}(x + \eta_1(x), \ell^m) 
~-~ \pi_{f,~q-1}(x, \ell^m) \Big)
~\ll~ 
q\sum_{1\leq m \leq m_0}  \frac{ \pi(\eta_1(x))}{\ell^m}  
~\ll~ 
q \cdot \frac{\pi(\eta_1(x))}{\ell} 
\end{equation}
and
\begin{eqnarray}\label{vxq2}
&&
\sum_{m_0 < m \leq \frac{\log(q x^{q k} )}{\log \ell}} 
\Big(\pi_{f,~q-1}(x + \eta_1(x), \ell^m) 
~-~ \pi_{f,~q-1}(x, \ell^m) \Big)  \nonumber\\
&\leq&
\sum_{m \leq \frac{\log(q x^{q k} )}{\log \ell}} 
\Big(\pi_{f,~q-1}(x + \eta_1(x), \ell^{m_0}) - \pi_{f,~q-1}(x, \ell^{m_0}) \Big)
\phantom{mmm} \nonumber\\
&\ll&
\frac{q}{\ell^{m_0}}  \pi(\eta_1(x)) \cdot\frac{q\log x}{\log \ell}
~\ll~
\frac{q^2 \eta_1(x)}{\ell^{m_0} \log \ell}
~\ll~ 
\frac{q^2 \eta_1(x)}{z} \cdot~\frac{ \ell}{\log \ell}.
\end{eqnarray}
From \eqref{vxq1} and \eqref{vxq2}, we get
\begin{equation}\label{vxq3}
\nu_{x,\ell} 
~\ll~ \frac{q^2 \eta_1(x)}{z} \cdot \frac{\ell}{\log \ell}.
\end{equation}
Note that if $\ell \not\equiv 0, \pm 1 ~(\text{mod } q)$, 
we have $C_{\ell^m, ~q-1} = \emptyset$ 
(see \lemref{deq-1l} and \lemref{dq-1lm}).
Hence if $\ell^m \mid a_f(p^{q-1})$, 
then we must have $p \mid \ell N$ (see section \ref{Smod}). 
Since $p \in V_q(x)$, we obtain $p = \ell$. 
Hence we have 
$\nu_{x, \ell} \leq \nu_{\ell}(a_f(\ell^{q-1})) \ll  kq$ 
if  $\ell \not\equiv 0, \pm 1 ~(\text{ mod } q)$.
It follows from \eqref{prodafp}, \eqref{p-b} and \eqref{choice1}  that
\begin{equation}\label{log=vxll}
\sum_{p \in V_q(x)} \log|a_f(p^{q-1})| 
~=~
\sum_{\ell \leq w} \nu_{x, \ell} \log \ell.
\end{equation}
Now applying \eqref{vxq3} and Brun-Titchmarsh inequality 
(see \cite[Theorem~3.8]{HalR}), we obtain
\begin{equation*}\label{upp}
\begin{split}
\sum_{\substack{\ell \leq w \\ \ell \equiv 0, \pm 1 (\text{ mod } q)}}
\nu_{x,\ell} \log \ell
&~\leq~ \nu_{x,q} \log q 
~+~  \sum_{\substack{\ell \leq w \\ \ell \equiv \pm 1 (\text{ mod } q)}}
\nu_{x,\ell} \log \ell \\
& ~\ll~
\frac{q^3 \eta_1(x)}{z} ~~+~
\frac{q^2 \eta_1(x)}{z}
\sum_{\substack{\ell \leq w \\\ell \equiv \pm 1 (\text{ mod } q)}} \ell 
~~\ll~~ 
\frac{q^3 \eta_1(x)}{z} ~+~ \frac{q^2 \eta_1(x)}{z}  \frac{w^2}{q \log (w/q)} 
\end{split}
\end{equation*}
for all  sufficiently large $x$ depending on $ A, \e, q$ and $f$.
Also we have
\begin{equation*}
\begin{split}
\sum_{\substack{\ell \leq w \\ \ell \not\equiv 0, \pm 1(\text{ mod } q)}}
\nu_{x,\ell} ~\log \ell 
~~\ll~~
q \sum_{\ell \leq w} \log \ell 
~\ll~
q w.
\end{split}
\end{equation*}
Hence we conclude that
\begin{equation}\label{vxllog}
\sum_{\ell \leq w} \nu_{x, \ell} \log \ell
~\ll~
\frac{q^3 \eta_1(x)}{z} 
~+~ \frac{q^2 \eta_1(x)}{z}  \frac{w^2}{q \log (w/q)} 
\end{equation}
for all  sufficiently large $x$ depending on $A, \e, q, f$ and the 
implied constant depends only on $f$. 

Using Deligne's bound, we can write
$$
a_f(p) ~=~ 2 p^{\frac{k-1}{2}} \lambda_f(p)~,
\phantom{m} 
\lambda_f(p) \in [-1, 1].
$$
For any prime $p \nmid N$, we can deduce from \eqref{Lucaf} that
\begin{equation}\label{afPsi}
\begin{split}
a_f(p^{q-1}) 
~=~ \prod_{j=1}^{\frac{q-1}{2}} 
\( a_f(p)^2 - 4 \cos^2(\pi j/q) p^{k-1}\) 
~=~ (4 p^{k-1})^{\frac{q-1}{2}}  \prod_{j=1}^{\frac{q-1}{2}}
\(\lambda_f(p)^2 - \cos^2(\pi j/q)\).
\end{split}
\end{equation}
Set
\begin{equation}\label{Def-I}
\cI_{q} ~=~ \bigg\{ t \in [-1, 1] 
~:~ \bigg|t-\cos\(\frac{\pi j}{q}\)\bigg| \geq \frac{1}{q^2} 
~\text{ and }~ 
\bigg|t+\cos\(\frac{\pi j}{q}\)\bigg| \geq \frac{1}{q^2} 
~\phantom{m} \forall~ 1 \leq j \leq \frac{q-1}{2} \bigg\}
\end{equation}
From \rmkref{SymMin}, the set $\cI_q$ can be $\text{Sym}^M$-minorized 
if $M$ is sufficiently large (depending on $q$) 
and hence from \thmref{STshort}, we deduce that
\begin{equation}\label{lambdI}
\sum_{\substack{p \in (x, x+\eta_1(x)] \\ \lambda_f(p) \in \cI_q}} \log p 
~\gg~ \eta_1(x),
\end{equation}
where the implied constant depends on $q$ and $f$.
For any prime $p \in V_q(x)$ with $\lambda_f(p) \in \cI_q$, 
we have $|a_f(p^{q-1})| \geq (4p^{k-1})^{\frac{q-1}{2}} {q}^{-2(q-1)}$. 
Thus from \eqref{lambdI}, we get
\begin{equation}\label{Lpwx0}
\begin{split}
\sum_{p \in V_{q}(x)} \log|a_f(p^{q-1})| 
~\geq ~
\sum_{\substack{p \in V_q(x)\\ \lambda_f(p) ~\in~ \cI_{q}}} 
\log|a_f(p^{q-1})| 
&~\gg~
 \sum_{\substack{p \in V_q(x)\\ \lambda_f(p) ~\in~ \cI_{q}}} \log p 
 ~+~ O\(  \pi(\eta_1(x))\) \\
&~\gg~ \eta_1(x)
\end{split}
\end{equation}
for all sufficiently large $x$ depending on $A, q$ and $f$. 
Here the implied constant depends on $q$ and $f$. 
This gives a contradiction to \eqref{vxllog} 
if $x$ is sufficiently large depending on $A, \e, q$ and $f$.
This completes the proof of  \thmref{thm1}. 
 \qed

\subsection{Proof of \thmref{thm2}}\label{SecJq}
Suppose that GRH is true.
The proof now follows along the lines of the proof of \thmref{thm1}.
As in the proof of \thmref{thm1}, it is sufficient to investigate large prime factors 
$a_f(p^{q-1})$, where $q$ is a prime number. 
For any real number $\e \in (0, 1/10)$ and for any prime $q$,
let $V_q(x)$ be as in \lemref{red1} for $h(x)~=~x^{1/2+\e}$.

When $q$ is an odd prime, set 
\begin{equation*}
\cJ_{q} ~=~ \bigg\{ t \in [-1, 1] 
~:~ \bigg|t-\cos\(\frac{\pi j}{q}\)\bigg| \geq \frac{1}{ Cq^2} 
~\text{ and }~ \bigg|t+\cos\(\frac{\pi j}{q}\)\bigg| \geq \frac{1}{Cq^2} 
\phantom{m} \forall~ 1 \leq j \leq (q-1)/2\bigg\},
\end{equation*}
where $C > 0$ is a constant such that $\mu_{ST}\(\cJ_q\) > 1/2$.
From \rmkref{SymMin}, we know that $\cJ_q$ is 
$\text{Sym}^M$-minorized (with $b_0=b_0 (q) > 0$) 
if $M$ is sufficiently large (depending on $q$). Let  $0 < b < \min\{b_0, \frac{1}{7}\}$ 
and $c$ be a positive constant which will be chosen later.
When $q=1$, $c_1$ is a positive constant which will be chosen later.

Suppose that for any $p \in V_q(x)$, 
$$
P(a_f(p^{q-1})) ~\leq~ w,
$$
where
$$
w ~=~
\begin{cases}
	c_1x^{\e/7} (\log x)^{2/7}  & \text{when $q=2$}; \\
	c x^{\e b} & \text{when $q$ is an odd prime}.
\end{cases}
$$
Write
\begin{equation*}
\prod_{p \in V_q(x)} |a_f(p^{q-1})| 
~=~ \prod_{\ell \text{ prime }} \ell^{\nu_{x,\ell}}.
\end{equation*}
This implies that
\begin{equation}\label{req1}
\sum_{p \in V_q(x)} \log|a_f(p^{q-1})| 
~=~ \sum_{\ell \leq w} \nu_{x,\ell} \log \ell,
\end{equation}
Using  \lemref{red1}, we know that
\begin{equation}
\nu_{x,\ell} ~\leq~ 
\sum_{1 \leq m \leq \frac{\log(q x^{q k })}{\log \ell}} 
 \bigg(\pi_{f, q-1}(x+ x^{1/2+\e}, \ell^m) - \pi_{f, q-1}(x, \ell^m)\bigg).
\end{equation}
Set 
\begin{equation*}
z ~=~
\begin{cases}
c_1\frac{x^{2\e/7}}{(\log x)^{3/7}}  & \text{when $q=2$}; \\
c \frac{x^{2\e b}}{\log x} & \text{when $q$ is an odd prime}~~.
\end{cases}
\end{equation*}
From now on, assume that $\ell \le w$, $x$ be sufficiently large
and $m_0 = m_0(x, \ell) = \Big[\frac{\log z}{\log \ell}\Big]$.

\subsection*{When $n=1$ or equivalently $q=2$.}
Applying \thmref{pifGRH} and \lemref{deltal}, we have 
\begin{equation}\label{pifm0}
\begin{split}
\sum_{1 \leq m \leq m_0}  
\left( \pi_{f, ~1}(x+ x^{1/2+\e}, \ell^m) - \pi_{f, ~1}(x, \ell^m)\right)
& ~=~ \sum_{1 \leq m \leq m_0} \delta(\ell^m) 
\bigg\{\frac{x^{1/2+\e}}{\log x}
~+~ O\(\ell^{4m} x^{1/2} \log x \) \bigg\}\\
& = \frac{1}{\ell} \cdot \frac{x^{1/2+\e}}{\log x} 
~+~ O\(\frac{x^{1/2+\e}}{\ell^2 \log x}\) 
~+~ O\(z^3 x^{1/2} \log x\).
\end{split}
\end{equation}
Further 
$\sum_{m_0 < m \leq \frac{\log(2 x^{2k})}{\log \ell}}  
\left(\pi_{f, ~1}(x+ x^{1/2+\e}, \ell^m) -  \pi_{f, ~1}(x, \ell^m)\right)$
is less than or equal to
\begin{eqnarray}\label{pifmm0}
\bigg(\pi_{f, ~1}(x+ x^{1/2+\e}, \ell^{m_0}) - \pi_{f, ~1}(x, \ell^{m_0})\bigg) 
\sum_{m \leq \frac{\log(2 x^{2k})}{\log \ell}} 1 
&\ll&
\(\frac{x^{1/2+\e}}{\ell^{m_0} \log x} 
~+~ \ell^{3m_0} x^{1/2} \log x \)  \frac{\log x}{\log \ell} \nonumber \\
&\ll&
\frac{x^{1/2+\e}}{z}\frac{\ell}{\log \ell} 
~+~ z^3 x^{1/2}\frac{(\log x)^2}{\log \ell}.
\end{eqnarray}
From \eqref{pifm0} and \eqref{pifmm0}, we get
\begin{equation}\label{pifvxq}
\nu_{x,\ell} ~\leq~
\frac{1}{\ell} \cdot \frac{x^{1/2+\e}}{\log x}  
~+~ O\(\frac{x^{1/2+\e}}{\ell^2 \log x}  
~+~  \frac{x^{1/2+\e}}{z}\frac{\ell}{\log \ell}
~+~  z^3 x^{1/2}\frac{(\log x)^2}{\log \ell}  \).
\end{equation}
It follows from \eqref{pifvxq} that
\begin{equation*}
\begin{split}
\sum_{\ell \leq w} \nu_{x,\ell} \log \ell 
&~\leq~ 
\frac{x^{1/2+\e}}{\log x} \log w 
~+~ c_5\(\frac{x^{1/2+\e}}{\log x} 
~+~ \frac{x^{1/2+\e}}{z}\frac{w^2}{\log w} 
~+~ z^3 x^{1/2} (\log x)^2 \frac{w}{\log w}\),
\end{split}
\end{equation*}
where $c_5>0$ is a constant depending on $f$. Now we choose $c_1$ such that
$2000 c_5 c_1(1 + c_1^3) < \e$.
Then by substituting the values of $w$ and $z$, we obtain
\begin{equation}\label{uppRH}
\sum_{\ell \leq w} \nu_{x,\ell} \log \ell 
~<~ \frac{x^{1/2+\e}}{30} 
\end{equation}
for all sufficiently large $x$  depending on $\e$ and $f$. On the other hand, 
from \thmref{STshomin} with $I = [-1, -0.1]$ and \rmkref{SymMin}, we get
$$
\sum_{\substack{ x < p \leq x+ x^{1/2+\e} \\ \lambda_f(p) \in I}} \log p 
~>~ \frac{2}{25} x^{1/2+\e}
$$
for all sufficiently large $x$ depending on $\e$ and $f$.
Hence we deduce that
\begin{equation}\label{lowRH}
\sum_{p \in V_2(x)} \log|a_f(p)| 
~>~ \frac{3}{40}\frac{k-1}{2} \cdot x^{1/2+\e}
\end{equation}
for all sufficiently large $x$ depending on $\e$ and $f$. 
This is a contradiction to \eqref{uppRH}. 

\subsection*{When $n >1$ or equivalently $q$ is an odd prime.}
Arguing as before and applying \thmref{STshomin} to the
interval $\cJ_q$, we can show that
\begin{equation}\label{lb}
\sum_{p \in V_q(x)} \log |a_f(p^{q-1})| 
~~\ge~~
 \frac{k q b}{16} x^{1/2 + \e}
\end{equation}
for all sufficiently large $x$ depending on $\e, q$ and $f$.

For any prime $\ell \leq w$ with $\ell \not\equiv 0, \pm 1 ~(\text{ mod } q)$,
we can deduce as in \thmref{thm1} that
\begin{equation}\label{n-nu}
\nu_{x, \ell} ~\leq~ \nu_{\ell} (a_f(\ell^{q-1})) ~=~ O(kq).
\end{equation}
When $\ell \equiv 0, \pm 1 (\text{mod } q)$,
applying \lemref{deltal}, \lemref{deq-1l}, \lemref{dq-1lm}
and \thmref{pifGRH}, we have 
$\sum_{1 \leq m \leq m_0}  
\left( \pi_{f,~q-1}(x+ x^{1/2+\e}, \ell^m) -  \pi_{f,~q-1}(x, \ell^m)\right)$
is less than or equal to
\begin{equation}\label{n-eq1}
 \frac{q-1}{2\ell} \cdot \frac{x^{1/2+\e}}{\log x}  
 ~~+~~O\(\frac{qx^{1/2+\e}}{\ell^2 \log x}\) ~+~ O\(q z^3 x^{1/2} \log x\).
\end{equation}
Further 
$\sum_{m_0 < m \leq \frac{\log(q x^{qk})}{\log \ell}}  
\left(\pi_{f, q-1}(x+ x^{1/2+\e}, \ell^m) - \pi_{f, q-1}(x, \ell^m)\right)$
is less than or equal to
\begin{eqnarray}\label{n-eq2}
\bigg(\pi_{f, q-1}(x+ x^{1/2+\e}, \ell^{m_0}) - \pi_{f, q-1}(x, \ell^{m_0})\bigg) 
\sum_{m \leq \frac{\log(q x^{ q k} )}{\log \ell}} 1 
&\ll&
\(\frac{ q x^{1/2 + \e} }{\ell^{m_0} \log x} 
~+~ q \ell^{3m_0}  x^{1/2} \log x \) \frac{q\log x}{\log \ell} \nonumber \\
&\ll&
\frac{q^2 x^{1/2+\e}}{z}\frac{\ell}{\log \ell} 
~+~ q^2 z^3 x^{1/2}\frac{(\log x)^2}{\log \ell}.
\end{eqnarray}
From \eqref{n-nu}, \eqref{n-eq1} and \eqref{n-eq2}, we get
\begin{equation}\label{pifvxq1}
\nu_{x,\ell} ~\leq~
\frac{q-1}{2\ell} \cdot \frac{x^{1/2+\e}}{\log x}  
~+~ O\(\frac{qx^{1/2+\e}}{\ell^2 \log x}  
~+~  \frac{q^2x^{1/2+\e}}{z}\frac{\ell}{\log \ell}
~+~  q^2z^3 x^{1/2}\frac{(\log x)^2}{\log \ell}  \).
\end{equation}
It follows from \eqref{req1} and \eqref{pifvxq1} that
\begin{equation*}
\begin{split}
\sum_{\ell \leq w} \nu_{x,\ell} \log \ell 
&~\leq~ 
\frac{x^{1/2+\e}}{\log x} \log w 
~+~ c_6\(\frac{qx^{1/2+\e}}{\log x} 
~+~ \frac{qx^{1/2+\e}}{z}\frac{w^2}{\log w} 
~+~ q z^3 x^{1/2} (\log x)^2 \frac{w}{\log w}\) 
\end{split}
\end{equation*}
for all sufficiently large $x$ depending on $\e, q$ and $f$
and where $c_6>0$ is a constant depending on $\e, q$ and $f$.
Substituting the values of $z, w$ and by choosing $c$ such that
$2000 \cdot c_6 c < \e b^2$, we get a contradiction to \eqref{lb}
for all sufficiently large $x$ depending on $\e, q$ and $f$.
Hence there exists a prime $p \in (x, x+x^{1/2+\e}]$ with $p \nmid N$ such that
$$
P\(a_f(p^{q-1})\) ~>~ c x^{\e b}
$$
for some positive constant $c$ depending on $\e, q, f$ 
and for all sufficiently large $x$ depending on $\e, q$ and $f$.
This completes the proof of \thmref{thm2}. \qed

\smallskip

\subsection{Proof of \thmref{thmn}}

Proof of this theorem follows along the lines of the proof of \thmref{thm1}
or \thmref{thm2}. Let $V_{q}(x)$ be as in \lemref{red1} with
$h(x)=\eta(x) = x^{3/4} \log x \cdot \log\log x$. Set
\begin{equation*}
z ~=~ c x^{1/14}\frac{(\log\log x)^{2/7}}{(\log x)^{1/7}} 
\phantom{mm}\text{and}\phantom{mm} 
w ~=~ c x^{1/28} (\log x)^{3/7} (\log\log x)^{1/7},
\end{equation*}
where $c> 0$ is a constant which will be chosen later. 
Suppose that
$$
P(a_f(p^{q-1})) ~\leq~ c x^{1/28} (\log x)^{3/7} (\log\log x)^{1/7}
$$
for any $p \in V_q(x)$. Write
$$
\prod_{p \in V_q(x)} |a_f(p^{q-1})| 
~=~ \prod_{\ell \leq w} \ell^{\nu_{x,\ell}}.
$$
Then
\begin{equation}\label{l=vxl}
\sum_{p \in V_q(x)} \log|a_f(p^{q-1})| 
~=~ \sum_{\ell \leq w} \nu_{x,\ell} \log \ell,
\end{equation}
where, using \eqref{red-p}, we see that
\begin{equation*}
\nu_{x, \ell} 
~\leq~
\sum_{1 \leq m \leq \frac{\log(q x^{qk})}{\log \ell}} 
\Big(\pi_{f,~q-1}(x + \eta(x), \ell^m) ~-~ \pi_{f,~q-1}(x, \ell^m) \Big).
\end{equation*}
Fix a prime $\ell \leq w$ such that $\nu_{x, \ell} \neq 0$. 
If $\ell \not\equiv 0, \pm 1 ~(\text{ mod } q)$, then as before, 
we have
$$
\nu_{x, \ell} \leq \nu_{\ell} (a_f(\ell^{q-1})) ~=~ O(kq).
$$
Now suppose that $\ell \equiv 0, \pm 1 (\text{mod } q)$ and 
set  $m_0 = \Big[\frac{\log z}{\log \ell}\Big]$.
Let $x$ be sufficiently large from now on.
Then applying \thmref{pifGRH}, \lemref{deq-1l} and \lemref{dq-1lm}, we get 
\begin{equation}\label{pif1}
\begin{split}
\sum_{1 \leq m \leq m_0}  
\Big(\pi_{f,~q-1}(x + \eta(x), \ell^m) - \pi_{f,~q-1}(x, \ell^m) \Big)
& ~\leq~
\frac{q-1}{2 \ell}\frac{\eta(x)}{\log x} 
~+~ 
O\(\frac{ q}{\ell^2} \frac{\eta(x)}{\log x}\) 
~+~ 
O\( q z^3 x^{1/2} \log x\)
\end{split}
\end{equation}
and
\begin{equation}\label{pif2}
\begin{split}
\sum_{m_0 < m \leq \frac{\log(q x^{qk})}{\log \ell}}  
\Big(\pi_{f,~q-1}(x + \eta(x), \ell^m) ~-~ \pi_{f,~q-1}(x, \ell^m) \Big)
&~\ll~
\frac{q^2 \eta(x)}{z}\frac{\ell}{\log \ell} 
~+~ 
q^2 z^3 x^{1/2}\frac{(\log x)^2}{\log \ell}.
\end{split}
\end{equation}
From \eqref{pif1} and \eqref{pif2}, we get
\begin{equation}\label{pif3}
\nu_{x,\ell} 
~\leq~
\frac{q-1}{2 \ell}\frac{\eta(x)}{\log x} 
~+~ O\(\frac{ q}{\ell^2} \frac{\eta(x)}{\log x}  
~+~  \frac{q^2 \eta(x)}{z}\frac{\ell}{\log \ell}
~+~  q^2 z^3 x^{1/2}\frac{(\log x)^2}{\log \ell}\)
\end{equation}
when $\ell \equiv 0, \pm 1 (\text{mod } q)$.
It follows from \eqref{pif3} and Brun-Titchmarsh inequality that
\begin{equation}\label{1pm1q}
\begin{split}
\sum_{\substack{\ell \leq w \\ \ell \equiv \pm 1(\text{ mod } q)}} 
\nu_{x,\ell} \log \ell 
~\leq~  
\frac{\eta(x)}{\log x}  \log w 
~+~ c_7\( q \frac{\eta(x)}{\log x}  
~+~ \frac{q \eta(x)}{z}\frac{w^2}{\log (w/q)} 
~+~  q z^3 x^{1/2} (\log x)^2 \frac{w}{\log (w/q)}\)
\end{split}
\end{equation}
for all sufficiently large $x$ (depending on $q$ and $f$). 
Here $c_7$ is a positive constant depending only on~$f$. 
We also have
\begin{equation}\label{vxqn01}
\begin{split}
\nu_{x,q} \log q 
~\ll~
\frac{\eta(x)}{\log x}  \log q 
~+~ \frac{q^3 \eta(x)}{z} 
~+~ q^2 z^3 x^{1/2} (\log x)^2 
~~~\text{  and }
\sum_{\substack{\ell \leq w \\ \ell \not\equiv 0, \pm 1 (\text{mod } q)}} 
\nu_{x, \ell}\log \ell  
~\ll~ q w.
\end{split}
\end{equation}
Let $c$ be such that $2000 \cdot c_7 (c+c^4) < 1$. 
Then by substituting the values for $w$ and $z$ 
in \eqref{1pm1q} and \eqref{vxqn01}, we deduce that
\begin{equation}\label{vx113.5}
\sum_{\ell \leq w} \nu_{x, \ell} \log \ell 
~~<~ \frac{q}{20}~ \eta(x)
\end{equation}
for all sufficiently large $x$ (depending on $q$ and $f$).
Set $\cJ_q$ as in subsection \ref{SecJq}.
As before, by applying \thmref{EfSTGRH}, we can show that
\begin{equation}\label{lSq}
\sum_{p \in V_q(x)} 
\log |a_f(p^{q-1})|
~>~
\frac{kq}{17} \eta(x)
\end{equation} 
for all sufficiently large $x$ depending on $q$ and $f$. This gives 
a contradiction to \eqref{vx113.5} and completes the proof for
large prime factor of $a_f(p^{q-1})$ in the interval 
$(x, x+ \eta(x)]$ under GRH.

\subsection{Proof of \rmkref{rmk2}}
In \thmref{thm1}, instead of working with 
$$
V_q(x)~=~ \left\{ p \in (x, x+\eta_1(x)]  
~:~ p \nmid N,~ a_f(p^{q-1}) \neq 0 \right\},
$$
one has to consider 
$$
S_q(x)~=~ 
\left\{ p \in (x, x+\eta_1(x)]  ~:~ p \nmid N,~ a_f(p) \neq 0, 
~ P(a_f(p^{q-1})) ~\leq~ (\log x^q)^{1/8} (\log\log x^q)^{3/8 -\e}
\right\}.
$$
Arguing as in the proof of \thmref{thm1} 
(see \eqref{uppsum1}, \eqref{eqRl1}, \eqref{vxllog} and \eqref{Lpwx0}), 
we can deduce that
$$
\sum_{p \in S_q(x) \atop \lambda_f(p) \in \cI_q} \log p 
~+~ O\(\pi(\eta_1(x))\) 
~\ll~\sum_{p \in S_q(x)} \log|a_f(p^{q-1})| 
~=~
\sum_{\ell \leq w} \nu_{x, \ell} \log \ell
~\ll~\frac{\eta_1(x)}{(\log\log x)^{\e}}.
$$
Let $T_q(x) = \{p \in (x, x+\eta_1(x)] : \lambda_f(p) \in \cI_q\}$, 
where $\cI_2 = [-1, -1/2]$ and $\cI_q$ is as in \eqref{Def-I} for $q \ge 3$. 
Thus we get
\begin{equation}\label{eqR2}
\# \(S_q(x) \cap T_q(x) \) ~\ll~ 
\frac{\pi( \eta_1(x) )}{ (\log \log x)^{\e}}
\end{equation}
for all sufficiently large $x$ depending on $A, \e, q$ and $f$.  
As observed earlier, from \thmref{STshort}, 
there exists a positive constant $0 < b_1 < 1$ (depending on $A,q, f$) such that
\begin{equation}\label{eqR1}
\#T_q(x) ~\geq~ b_1 \pi( \eta_1(x) )
\end{equation}
for all sufficiently large $x$ (depending on $A, q$ and $f$). 
From \eqref{eqR2} and \eqref{eqR1}, we deduce that
$$
\limsup_{x \to \infty} \frac{\#S_q(x)}{\pi( \eta_1(x) )} 
~\leq~ 1-b_1 ~<~ 1.
$$
Thus there exists a positive constant $a_1$ such that
for all sufficiently large $x$,
there are at least $a_1\pi( \eta_1(x) )$ many primes
$p \in (x,  ~x + \frac{x}{(\log x)^A}]$ for which
\thmref{thm1} is true.

In \thmref{thm2}, we suppose that $\e > 0$ is sufficiently small 
and let $S_q(x)$ be the set of primes $ p \in ( x, x+x^{\frac{1}{2}+\e}]$ 
such that $a_f(p^{q-1}) \neq 0$ and
\begin{equation*}
P(a_f(p^{q-1})) ~\leq~
\begin{cases}
c_1x^{\e/7} (\log x)^{2/7}  & \text{when $q=2$}; \\
c x^{\e b} & \text{when $q$ is an odd prime},
\end{cases}
\end{equation*}
where $c, c_1$ are as in the proof of \thmref{thm2} 
and $T_q(x) = \{p \in (x, x+x^{\frac{1}{2}+\e}] : \lambda_f(p) \in \cI_q\}$. 
As before,
there exists a positive constant $0 < b_2 < 1$ (depending on $\e, f$) such that
\begin{equation}
\#T_q(x) ~\geq~ b_2 \frac{x^{1/2+\e}}{\log x}
\end{equation}
for all sufficiently large $x$ (depending on $\e, q$ and $f$).
In the case when $n=1$ or $q=2$, 
by choosing the constant $c_5> 0$ sufficiently small, 
we can deduce that
$$
\sum_{\ell \leq w} \nu_{x,\ell} \log \ell 
~<~ \frac{b_2}{2}  \cdot x^{1/2+\e}
$$ 
for all sufficiently large $x$ (depending on $\e, q$ and $f$).
Arguing as before, we can deduce that
$$
\limsup_{x \to \infty} \frac{\#S_q(x)}{x^{1/2+\e}/ \log x} 
~\leq~ 1-\frac{b_2}{2} ~<~ 1.
$$
One can deduce a similar conclusion in the remaining case of \thmref{thm2}. 
To deduce a similar conclusion for \thmref{thmn}, we proceed as follows.
Let  
$$
S_q(x) ~=~ \{ p \in (x, x+ \eta(x)] ~:~ a_f(p^{q-1}) \neq 0,~~ 
P(a_f(p^{q-1})) ~\leq~ c x^{1/28} (\log x)^{3/7} (\log\log x)^{1/7} \},
$$
where $\eta(x) = x^{3/4} \log x \cdot \log\log x$ 
and $c$ is as in the proof of \thmref{thmn}. 
Arguing as in the proof of \thmref{thmn}, 
we deduce that
\begin{equation}
\sum_{\ell \leq w} \nu_{x, \ell} \log \ell 
~<~ \frac{q}{20}~ \eta(x)
\end{equation}
for all sufficiently large $x$ (depending on $q$ and $f$).
Set $\cJ_q$ as in subsection \ref{SecJq} 
and $C>0$ is a constant which we will choose later.
Then we can deduce that
\begin{equation}
\sum_{p \in S_q(x)} 
\log |a_f(p^{q-1})|
~\geq~ 
\sum_{p \in S_q(x) \atop \lambda_f(p) \in \cJ_q} \log |a_f(p^{q-1})|
~\geq~
\frac{kq}{17} \cdot  \#\(S_q(x) \cap T_q(x)\) \log x + O\(\pi(\eta(x))\)
\end{equation} 
for all sufficiently large $x$ depending on $q$ and $f$. 
Here $T_q(x) = \{p \in (x, x+\eta(x)] : \lambda_f(p) \in \cJ_q\}$. 
Hence we get
$$
\limsup_{x \to \infty} \frac{ \#\(S_q(x) \cap T_q(x)\)}{\pi(\eta(x))} 
~\leq~ \frac{17}{20} < 1.
$$
We choose $C>0$ sufficiently large such that 
$\mu_{ST}(\cJ_{q}) > 1- \delta$ for some $  0<\delta < 17/2000$. 
Then we get
$$
\#T_q(x) ~\ge~ \(1- 2\delta\) \pi(\eta(x))
$$
for all sufficiently large $x$ depending on $q$ and $f$. 
Note that
\begin{equation}
\begin{split}
\#\(S_q(x) \cap T_q(x)\) 
&~=~ \#S_q(x) ~+~ \#T_q(x) ~-~ \#\(S_q(x) \cup T_q(x)\) \\
&~\geq~ \#S_q(x) ~+~ \(1- 2 \delta\) \pi(\eta(x)) ~-~ \pi(\eta(x)) \\ 
&~\geq~ \#S_q(x) ~-~ 2 \delta \pi(\eta(x)).
\end{split}
\end{equation}
Hence we deduce that 
$$
\limsup_{x \to \infty} \frac{\#S_q(x)}{\pi(\eta(x))} 
~\leq~ \frac{17}{20} ~+~ 2 \delta ~<~ 1.
$$

\qed

\medskip

\noindent
{\bf Acknowledgments.}
The authors would like would like to acknowledge the support of DAE number
theory plan project. The second author would like to thank 
the Institue of Mathematical Sciences, India and Queen's University, Canada 
for providing excellent atmosphere to work. The authors would like to thank 
the referee for suggesting us to estimate the  density of the set of primes 
for which Theorem 2 and Theorem 3 are true. Also, the authors would like to thank 
the referee for other valuable suggestions.


\begin{thebibliography}{100}
	
\bibitem{BS}
E. Bach and J. Sorenson, 
{\em Explicit bounds for primes in residue classes}, 
Math. Comp. {\bf 65} (1996), no. 216, 1717--1735.


\bibitem{BO}
A. Balog and K. Ono,
{\em The Chebotarev density theorem in short intervals 
and some questions of Serre}, 
J. Number Theory \textbf{91} (2001), no. 2, 356--371. 	

\bibitem{BGHT}
T. Barnet-Lamb, D. Geraghty, M. Harris and R. Taylor,
{\em A family of Calabi-Yau varieties and potential automorphy II}, 
Publ. Res. Inst. Math. Sci. \textbf{47} (2011), no. 1, 29--98. 


\bibitem{BGN}
Yu. F. Bilu, S. Gun and S. L. Naik,
{\em On non-Archimedean analogue of Atkin-Serre Question}, 
Math. Ann. (2023). https://doi.org/10.1007/s00208-023-02686-8


\bibitem{Ca}
H. Carayol,
{\em Sur les repr\'esentations $\ell$-adiques attach\'ees aux 
formes modulaires de Hilbert},
C. R. Acad. Sci. Paris S\'er. I Math. \textbf{296} (1983), no. 15, 629--632.

\bibitem{RC}
R. D. Carmichael, 
{\em On the numerical factors of the arithmetic forms $\alpha^n \pm \beta^n$}, 
Ann. of Math. (2) \textbf{15} (1913/14), no. 1-4, 30--48. 

\bibitem{CHT}
L. Clozel, M. Harris and R. Taylor, 
{\em Automorphy for some $\ell$-adic lifts of automorphic mod $\ell$ 
Galois representations},  
Inst. Hautes \'Etudes Sci. Publ. Math. \textbf{108} (2008), 1--181. 


\bibitem{Da}
H. Davenport, 
Multiplicative number theory, 
Third edition, Graduate Texts in Mathematics, {\bf 74}, 
{\em Springer-Verlag}, New York, 2000. 

\bibitem{De}
P. Deligne,
Formes modulaires et repr\'{e}sentations $\ell$-adiques,
{\em S\'{e}minaire Bourbaki }, Vol. 1968/69: Expos\'{e}s 347–363, 
Exp. No. 355, 139--172,
Lecture Notes in Math., 175, Springer, Berlin, 1971.


\bibitem{GM}
S. Gun and M. R. Murty,
{\em Divisors of Fourier coefficients of modular forms},
New York J. Math. \textbf{20} (2014), 229--239.


\bibitem{GN}
S. Gun and S. L. Naik,
{\em On the largest prime factor of non-zero Fourier coefficients 
of Hecke eigenforms}, Forum Math. (2023). 
https://doi.org/10.1515/forum-2023-0050


\bibitem{HalR}
H. Halberstam and H.-E. Richert,
Sieve Methods, London Mathematical Society Monographs, No. 4,
{\em Academic Press}, London-New York, 1974.


\bibitem{HST}
M. Harris, N. Shepherd-Barron and R. Taylor,
{\em A family of Calabi-Yau varieties and potential automorphy}, 
Ann. of Math. (2) \textbf{171} (2010), no. 2, 779--813.


\bibitem{LO}
J. C. Lagarias and A. M. Odlyzko,
{\em Effective versions of the Chebotarev density theorem},
Algebraic number fields: L-functions and Galois properties (Proc. Sympos., 
Univ. Durham, Durham, 1975), Academic Press, London, 1977, 409--464.

\bibitem{La}
S. Lang,
Algebraic number theory, Second edition,
Grad. Texts in Math.,  \textbf{110},
{\em Springer-Verlag}, New York, 1994. 


\bibitem{OT}
R. J. Lemke Oliver and J. Thorner, 
{\em Effective log-free zero density estimates for automorphic L-functions and the 
Sato-Tate conjecture}, 
Int. Math. Res. Not. IMRN \textbf{22} (2019), 6988--7036. 


\bibitem{Mo}
F. Momose,
{\em On the $\ell$-adic representations attached to modular forms},
J. Fac. Sci. Univ. Tokyo Sect. IA Math. \textbf{28} (1981), no. 1, 89--109.


\bibitem{MMprime}
M. R. Murty and V. K. Murty,
{\em Prime divisors of Fourier coefficients of modular forms},
Duke Math. J. \textbf{51} (1984), no. 1, 57--76.


\bibitem{Ne}
J. Neukirch, 
Algebraic number theory, {\em Springer-Verlag }, Berlin, 1999.


\bibitem{RT}
J. Rouse and J. Thorner, 
{\em The explicit Sato-Tate conjecture and densities pertaining to Lehmer-type questions}, 
Trans. Amer. Math. Soc. {\bf 369} (2017), no. 5, 3575--3604. 


\bibitem{SeCh}
J.-P. Serre, 
{\em Quelques applications du th\'{e}or\`{e}me de densit\'{e} de Chebotarev },
Inst. Hautes \'Etudes Sci. Publ. Math. \textbf{54} (1981), 323--401.

\bibitem{St}
C. L. Stewart, 
{\em On divisors of Fermat, Fibonacci, Lucas, and Lehmer numbers},
Proc. London Math. Soc. \textbf{35} (1977), no. 3, 425--447.

\bibitem{Sw}
H. P. F. Swinnerton-Dyer,
{\em On $\ell$-adic representations and congruences for coefficients of modular forms}, 
Modular functions of one variable III,
Lecture Notes in Math., Vol. \textbf{350}, Springer, Berlin, 1973, 1--55.


\bibitem{TZ}
J. Thorner and A. Zaman,
{\em A unified and improved Chebotarev density theorem},
Algebra Number Theory \textbf{13} (2019), no. 5, 1039--1068.


\bibitem{Th}
J. Thorner,
{\em Effective forms of the Sato-Tate conjecture},
Res. Math. Sci. \textbf{8} (2021), no. 1, Paper No. 4, 21 pp.


\bibitem{We}
A. Weiss, 
{\em The least prime ideal}, 
J. Reine Angew. Math. \textbf{338} (1983), 56--94. 

\end{thebibliography}
\end{document}